\documentclass[12pt]{amsart}
\usepackage[utf8]{inputenc}

\usepackage{amsmath}%
\usepackage{amsfonts}%
\usepackage{amssymb}%
\usepackage{amsthm}
\usepackage{bm}
\usepackage{graphicx}
\usepackage{stmaryrd}
\usepackage{centernot}
\usepackage{url}
\usepackage[english]{babel}
\usepackage{geometry}
\usepackage{enumerate}


\usepackage{tikz}
\usepackage{wrapfig}
\usetikzlibrary{positioning}

%
\theoremstyle{plain}
\newtheorem{theorem}{Theorem}[section]
\newtheorem{corollary}[theorem]{Corollary}
\newtheorem{lemma}[theorem]{Lemma}
\newtheorem{fact}[theorem]{Fact}
\newtheorem{claim}[theorem]{Claim}
\newtheorem{proposition}[theorem]{Proposition}

\theoremstyle{definition}
\newtheorem{definition}[theorem]{Definition}
\newtheorem{example}[theorem]{Example}
\newtheorem{remark}[theorem]{Remark}

\renewcommand{\subset}{\subseteq}

\newcommand{\forces}{\Vdash}

\mathchardef\mhyphen="2D
\renewcommand{\P}{\mathbb{P}}
\newcommand{\Q}{\mathbb{Q}}
\newcommand{\BPi}{\mathbf{\Pi}}
\newcommand{\BSigma}{\mathbf{\Sigma}}

\newcommand{\R}{\mathbb{R}}
\newcommand{\Z}{\mathbb{Z}}
\newcommand{\N}{\mathbb{N}}

\newcommand{\set}[2]{ \left\{ #1 :\, #2 \right\} }
\newcommand{\seqq}[2]{ \left\langle #1 :\, #2\right\rangle }

\begin{document}
\title{Actions of tame abelian product groups}
\date{\today}

\author[Shaun Allison]{Shaun Allison}
\address{Department of Mathematical Sciences, Carnegie Mellon University, Pittsburgh, PA 15213}
\email{sallison@andrew.cmu.edu}

\author[Assaf Shani]{Assaf Shani}
\address{Department of Mathematics, Harvard University, Cambridge, MA 02138, USA}
\email{shani@math.harvard.edu}

\maketitle

\begin{abstract}
A Polish group $G$ is \textit{tame} if for any continuous action of $G$, the corresponding orbit equivalence relation is Borel.
When $G = \prod_n \Gamma_n$ for countable abelian $\Gamma_n$, Solecki \cite{Solecki1995} gave a characterization for when $G$ is tame. 
In \cite{DingGao2017}, Ding and Gao showed that for such $G$, the orbit equivalence relation must in fact be potentially $\BPi^0_6$, while conjecturing that the optimal bound could be $\BPi^0_3$.
We show that the optimal bound is $D(\BPi^0_5)$ by constructing an action of such a group $G$ which is not potentially $\mathbf{\Pi}^0_5$, and show how to modify the analysis of \cite{DingGao2017} to get this slightly better upper bound.
It follows, using the results of Hjorth, Kechris, and Louvaeu \cite{HKL1998}, that this is the optimal bound for the potential complexity of actions of tame abelian product groups.
Our lower-bound analysis involves forcing over models of set theory where choice fails for sequences of finite sets.
\end{abstract}

\section{Introduction}\label{section: introduction}
Let $a\colon G\curvearrowright X$ be a continuous action of a Polish group $G$ on a Polish space $X$.
The \textbf{orbit equivalence relation} induced by the action is the equivalence relation $E_a$ on $X$ defined by $x\mathrel{E_a} y\iff\exists g\in G(g\cdot x=y)$.
Such an orbit equivalence relation $E_a$ is always analytic as a subset of $X\times X$.
A natural question is whether or not it is in fact Borel.

A Polish group $G$ is \textbf{tame} if for any continuous action $a\colon G\curvearrowright X$, the orbit equivalence relation $E_a\subseteq X\times X$ is Borel.
For example, if $G$ is compact or locally compact, then $E_a$ must be closed ($\mathbf{\Pi}^0_1$) or $F_\sigma$ ($\mathbf{\Sigma}^0_2$) respectively.
In general, by the universal $G$-space construction of Becker and Kechris \cite[Section 2.6]{BeckerKechris1996}, a Polish group $G$ is tame if and only if there is a single countable ordinal $\alpha$ such that $E_a$ is $\mathbf{\Pi}^0_\alpha$ for any continuous $a\colon G\curvearrowright X$.
By results of Becker and Kechris \cite{BeckerKechris1996}, $G$ is tame if and only if for any \emph{Borel} action $\alpha\colon G\curvearrowright X$, the orbit equivalence relation $E_a$ is Borel.

In \cite{Sami1994}, Sami asked if all abelian Polish groups are tame. Solecki~\cite{Solecki1995} answered in the negative, giving the following complete characterization for tameness among the \textbf{abelian product groups}, that is, the Polish groups of the form $\prod_{n < \omega} \Gamma_n$ for countable discrete abelian groups $\Gamma_n$, equipped with the product topology.

\begin{theorem}(Solecki~\cite{Solecki1995})
An abelian product group $\prod_{n\in\omega}\Gamma_n$ is tame if and only if the following conditions hold:
\begin{itemize}
    \item $\Gamma_n$ is torsion for all but finitely many $n$, and
    \item for any prime $p$, for all but finitely many $n$, $\Gamma_n$ is $p$-compact, i.e. the $p$-component of $\Gamma_n$ is of the form $F\times \mathbb{Z}(p^\infty)^k$ for some finite abelian $p$-group $F$ and $k\in\omega$.
\end{itemize}
\end{theorem}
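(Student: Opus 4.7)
I would prove both directions by contrapositive. For necessity, suppose condition (1) fails: infinitely many $\Gamma_n$ contain elements $a_n$ of infinite order. The closed subgroup of $G$ generated by the coordinate images of the $a_n$ continuously surjects onto $\Z^\omega$, so it is enough to produce a continuous action of $\Z^\omega$ whose orbit equivalence relation is not Borel; this can be done by, e.g., a shift-type action on a Polish coefficient space where the tail behaviour forces the orbit relation to compute a complete analytic set. If instead condition (2) fails, so that some prime $p$ has infinitely many $n$ for which the $p$-component of $\Gamma_n$ is not of the form $F \times \Z(p^\infty)^k$, the structure theory of countable abelian $p$-groups delivers witnesses of unbounded Ulm length inside each offending $\Gamma_n$, and one uses the product of those witnesses to code a Friedman--Stanley-style non-Borel orbit equivalence into a continuous action of $G$.

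For sufficiency, assume both conditions hold and fix any continuous action $a\colon G \curvearrowright X$; by Becker--Kechris it is enough to bound $E_a$ uniformly. Split $G$ as a product of a locally compact factor (the finitely many non-torsion $\Gamma_n$, absorbed by an $F_\sigma$-type argument) and a torsion part. The torsion part decomposes primewise, and condition (2) forces the $p$-part to be eventually $\prod_n (F_n \times \Z(p^\infty)^{k_n})$, a pro-discrete group with a descending filtration by open subgroups $U_k$ of countable index satisfying $\bigcap_k U_k = \{e\}$. A Vaught transform / Sami-rank argument along this filtration expresses the $G$-orbit of a point as the intersection over $k$ of its $U_k$-saturation, each of which is Borel of rank controlled by the finite quotients $G/U_k$; assembling these gives a Borel (indeed, $\mathbf{\Pi}^0_\alpha$ for some small explicit $\alpha$) definition of $E_a$.

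The main obstacle will be the necessity direction when only condition (2) fails. The non-torsion case reduces to the well-understood $\Z^\omega$ scenario, where off-the-shelf non-Borel examples apply, but the $p$-non-compact case requires engineering an action whose continuity in the product topology is delicately compatible with the combinatorics of the unbounded-height Pr\"ufer-like factors inside each $\Gamma_n$. Getting both the coding and the continuity to cooperate is what makes this step intricate, and it is precisely the construction that the present paper refines, extracting not only non-Borelness but the sharp lower bound $D(\mathbf{\Pi}^0_5)$ on potential descriptive complexity.
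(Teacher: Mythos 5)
The paper does not prove this theorem; it is quoted from Solecki~\cite{Solecki1995} as a black box, so there is no internal proof to compare against. Judged on its own terms, your proposal has genuine gaps in both directions. The most serious one is in the sufficiency argument: the filtration you invoke --- open subgroups $U_k=\prod_{n\geq k}\Gamma_n$ of countable index with $\bigcap_k U_k=\{e\}$ --- exists for \emph{every} abelian product group, tame or not, so an argument that only uses this filtration would ``prove'' that $\mathbb{Z}^\omega$ and $(\mathbb{Z}_p^{<\omega})^\omega$ are tame, which is false. Moreover, the key identity you lean on is not available: the $G$-orbit of $x$ is \emph{not} in general the intersection over $k$ of the $U_k$-saturations of $x$ (that intersection is a closure-type object, typically strictly larger than the orbit), and the discrepancy between the two is exactly where non-Borelness lives. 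A correct sufficiency proof must actually use $p$-compactness, e.g.\ via the fact that $F\times\mathbb{Z}(p^\infty)^k$ satisfies the minimal condition on subgroups, to control certain canonical quotient actions; this is the content of Solecki's and Ding--Gao's analyses and cannot be replaced by a generic Vaught-transform rank computation.

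In the necessity direction, the reduction of the non-torsion case to $\mathbb{Z}^\omega$ is the right idea (though you want $\prod_n\langle a_n\rangle\cong\mathbb{Z}^\omega$ as a \emph{closed subgroup}, plus the fact that tameness passes to closed subgroups, rather than a continuous surjection), but ``a shift-type action whose tail behaviour computes a complete analytic set'' is an assertion, not a construction; the non-tameness of $\mathbb{Z}^\omega$ is itself a substantial theorem of Solecki. For the failure of condition (2), ``unbounded Ulm length'' is the wrong structural invariant: $\mathbb{Z}_p^{<\omega}=\bigoplus_n\mathbb{Z}/p\mathbb{Z}$ fails $p$-compactness while having trivial Ulm structure. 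The correct dichotomy is that a countable abelian $p$-group is of the form $F\times\mathbb{Z}(p^\infty)^k$ if and only if its $p$-socle is finite, if and only if it does not contain a copy of $\mathbb{Z}_p^{<\omega}$; when infinitely many $\Gamma_n$ fail this, one extracts a closed subgroup of the form $\prod_n H_n$ with $H_n\cong\mathbb{Z}_p^{<\omega}$ (essentially $(\mathbb{Z}_p^{<\omega})^\omega$, listed in the paper as non-tame) and again needs an explicit non-Borel action of that group. Both witness constructions are the real work of the theorem and are missing from the plan.
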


For example, $\mathbb{Z}(p^\infty)^\omega$, $\prod_{p\textrm{ prime}}\mathbb{Z}(p^\infty)$, $(\bigoplus_{p\textrm{ prime}} \mathbb{Z}_p)^\omega$, and $\prod_{p\textrm{ prime}} \mathbb{Z}_p^{<\omega}$ are tame, while $\mathbb{Z}^\omega$ and
$(\mathbb{Z}_p^{<\omega})^\omega$ are not tame.
Here, for $p$ prime, $\mathbb{Z}(p^\infty)$ denotes the quasicyclic $p$-group $\mathbb{Z}(p^\infty)\simeq\set{z\in\mathbb{C}}{\exists n (z^{p^n}=1)}$, while $\mathbb{Z}_p^{<\omega}$ denotes the group
$\bigoplus_{n\in\omega}\mathbb{Z}_p$.

By applying finer notions of complexity to the induced orbit equivalence relations, we can analyze finer notions of tameness for Polish groups. 
Let $\mathbf{\Gamma}$ be a class of sets in Polish spaces, which is closed under continuous preimages. An equivalence relation $E$ on a Polish space $X$ is said to be \textbf{potentially} $\Gamma$ if there is a Polish topology $\tau$ on $X$, generating the same Borel structure, such that $E\subset X\times X$ is in $\mathbf{\Gamma}$ with respect to the product topology $\tau\times\tau$.

The notion of potential complexity reflects the inherent complexity of an equivalence relation, as it is respected by Borel reductions. Given two equivalence relations $E$ and $F$ on Polish spaces $X$ and $Y$ respectively, a map $f\colon X\to Y$ is a \textbf{reduction} from $E$ to $F$ if $x\mathrel{E}y\iff f(x)\mathrel{F}f(y)$ holds for all $x,y\in X$.
We say that $E$ is \textbf{Borel reducible} to $F$, denoted $E\leq_B F$, if there is a Borel map $f\colon X\to Y$ reducing $E$ to $F$.
It is an easy fact that an equivalence relation $E$ is potentially $\mathbf{\Gamma}$ if and only if there is an equivalence relation $F$ on a Polish space $Y$ such that $E$ is Borel reducible to $F$ and $F$ is in $\mathbf{\Gamma}$, as a subset of $Y\times Y$.
For more background on potential Borel complexity, including a proof of this fact, see \cite{HKL1998}.

We then say that a Polish group $G$ is $\mathbf{\Gamma}$-\textbf{tame} if for every Polish space $X$ and continuous action $a : G \curvearrowright X$, the induced orbit equivalence relation $E_a$ is potentially $\mathbf{\Gamma}$. 
For example, restating earlier remarks in this language, the compact Polish groups are $\BPi^0_1$-tame, while the locally-compact Polish groups are $\BSigma^0_2$-tame.


Solecki's proof shows in fact that there is a countable ordinal $\alpha$ such that for any abelian product group $\prod_n\Gamma_n$, if it is tame then it is actually $\BPi^0_\alpha$-tame.
Ding and Gao proved that $\alpha$ can be taken to be 6, that is, every tame abelian product group is in fact $\BPi^0_6$-tame \cite[Theorem 1.2]{DingGao2017}. Noting that all known examples of orbit equivalence relations induced by actions of such groups are in fact potentially $\mathbf{\Pi}^0_3$, they included the ``bold conjecture'' that their bound could be improved to $\mathbf{\Pi}^0_3$ (see \cite[Conjecture 8.3]{DingGao2017}).

We refute this conjecture and compute the optimal bound to be $D(\mathbf{\Pi}^0_5)$.
\begin{theorem}\label{thm;main}
There is a tame abelian product group which is not $\mathbf{\Pi}^0_5$-tame, and furthermore every tame abelian product group is $D(\mathbf{\Pi}^0_5)$-tame.
\end{theorem}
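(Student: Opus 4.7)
The plan is to prove the two halves of the theorem separately, with the upper bound coming from a refinement of Ding and Gao's argument and the lower bound coming from an explicit construction whose complexity is verified by a forcing/symmetric model argument.

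For the upper bound, I would revisit the structure of the Ding-Gao proof of $\BPi^0_6$-tameness. Their analysis presumably decomposes an orbit equivalence relation $E_a$ induced by a continuous action of a tame abelian product group $G=\prod_n \Gamma_n$ into an iteration of simpler equivalence relations, with one final outer ``closure'' operation pushing the complexity from $\BPi^0_5$ up to $\BPi^0_6$. The goal is to show that this outermost operation is not a full countable intersection of $\BSigma^0_5$ conditions, but can in fact be written as a difference of two $\BPi^0_5$ relations. Concretely, I would try to identify $\BPi^0_5$ relations $R_1, R_2$ such that $x\mathrel{E_a}y$ if and only if $R_1(x,y)$ holds and $R_2(x,y)$ fails, by observing that the ``bad'' witnesses requiring $\BPi^0_6$ closure are really captured by a single $\BSigma^0_5$ exception, which combined with a positive $\BPi^0_5$ condition gives the $D(\BPi^0_5)$ description. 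This is a bookkeeping improvement provided their proof is sufficiently explicit about where and why the sixth level of complexity is introduced.

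For the lower bound, I would construct an action of an explicit tame abelian product group $G$ whose orbit equivalence relation $E$ is not potentially $\BPi^0_5$. A natural candidate for $G$ is something like $\bigoplus_p \Z_p^{<\omega}$ or a product of finite $p$-groups satisfying Solecki's $p$-compactness conditions on each prime component; such groups are tame but have enough algebraic room to encode iterated orbit structure. The action should be arranged so that the orbits encode a suitably iterated equivalence relation at level close to $\BPi^0_5$, with one additional ``difference'' layer at the top to push past potential $\BPi^0_5$. To verify that $E$ is not potentially $\BPi^0_5$, I would invoke the Hjorth-Kechris-Louveau dichotomy together with a symmetric-model argument in the style of Hjorth and Shani. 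Specifically, I would build a symmetric submodel of a generic extension in which a carefully chosen sequence of finite sets has no choice function, while each individual finite set still admits a choice; this asymmetry is intended to match the $D(\BPi^0_5)$ versus $\BPi^0_5$ gap exactly. A hypothetical Borel reduction to a potentially $\BPi^0_5$ relation would, by absoluteness, produce a definable invariant for a generic orbit in the symmetric model, and this invariant could be unwound to yield the forbidden choice function, a contradiction.

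The main obstacle, I expect, is calibrating the lower bound construction so that simultaneously (i) the acting group is genuinely of the form demanded by Solecki's theorem — torsion and $p$-compact on cofinitely many coordinates — and (ii) the associated symmetric model separates $\BPi^0_5$ from $D(\BPi^0_5)$ on the nose, rather than giving only a weaker non-reducibility. The balance is delicate: one needs enough algebraic structure in the $\Gamma_n$ to encode the failure of a single layer of choice through a definable orbit invariant, while still staying inside the tame regime. Identifying the right ``combinatorial skeleton'', namely a sequence of finite abelian quotients through which the action factors that both fits inside a tame product group and witnesses the sharp obstruction at level $5$, is the crux of the whole argument.
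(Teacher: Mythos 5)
There is a genuine gap on the lower-bound side, and it is the crux of the theorem. Your candidate groups cannot work: $\bigoplus_p \Z_p^{<\omega}$ is a countable discrete group, hence locally compact and therefore $\BSigma^0_2$-tame, and a product of finite groups is compact, hence $\BPi^0_1$-tame. No action of either can fail to be potentially $\BPi^0_5$. The paper's witness is
$\Z^{<\omega}\times\prod_n\bigoplus_{p\in P_n}\Z_p\times\prod_p\Z_p^{<\omega}\times\Gamma^\omega$ with $\Gamma=\Z(q^\infty)$; the non--locally-compact factor $\Gamma^\omega$ and the three alternations of $\bigoplus$ inside $\prod$ are essential. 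Relatedly, a single symmetric model in which ``a sequence of finite sets has no choice function'' only buys one level: it shows the invariant's model is not generated by a set of reals, i.e.\ non-reducibility to $=^+$ and hence non-potential-$\BPi^0_3$. To defeat $=^{+++}$ (equivalently, potential $\BPi^0_5$, via Hjorth--Kechris--Louveau) the construction must be iterated three times, each layer being a new sequence of finite sets of the previous layer's invariants with no definable choice function, and one needs a propagation lemma (Monro's) showing that if consecutive models agree on sets of rank $n$ then the next stage agrees on sets of rank $n+1$, so that a definable set of sets of sets of reals is trapped in the intermediate model. Your sketch of ``unwinding the invariant to a forbidden choice function'' is the right mechanism for one layer, but without the iteration and the rank-propagation step the argument stops at $\BPi^0_3$.

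On the upper bound your plan is also not what is needed, though it is less damaging. Rather than re-engineering Ding--Gao's proof to exhibit $E_a$ directly as a difference of two $\BPi^0_5$ sets, the paper splits $G=\Delta\times G'$ where $\Delta$ is the finite product of non-torsion factors (a countable group) and $G'$ is the torsion tail; Ding--Gao already gives that $E^{G'}_X$ is potentially $\BPi^0_5$ (reducing it to $(E_0^\omega)^{++}\leq_B{=^{+++}}$), and quotienting further by the countable group $\Delta$ adds one existential quantifier, giving $\BSigma^0_6$. The Hjorth--Kechris--Louveau classification of achievable potential classes for non-Archimedean group actions then collapses potentially $\BSigma^0_6$ to potentially $D(\BPi^0_5)$. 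That last gap-theorem step is the idea your proposal is missing; without it, the direct $R_1\setminus R_2$ bookkeeping you describe would have to be carried out explicitly inside Ding--Gao's argument, and there is no indication of how to do so.
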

The majorty of this paper is dedicated towards proving the lower bound in Section~\ref{sec;lower-bounds}. In Section~\ref{section: upper bounds}, we show how to adjust the analysis of Ding-Gao to get the upper bound.

Say that $\boldsymbol{\Gamma}$ is the \textbf{(exact) potential complexity} of $E$ if $E$ is potentially $\boldsymbol{\Gamma}$ but $E$ is not potentially $\tilde{\boldsymbol{\Gamma}}$, where $\tilde{\boldsymbol{\Gamma}}$ is the class of complements of sets in $\boldsymbol{\Gamma}$.
Hjorth, Kechris, and Louveau \cite{HKL1998} completely classified the possible pointclasses $\boldsymbol{\Gamma}$ which can be realized as the exact potential complexity of an equivalence relation induced by an action of a closed subgroup of $S_\infty$.
For example, they show that among the finite Borel ranks the only possibilities are 
 $\mathbf{\Delta}^0_1$, $\mathbf{\Pi}^0_1$ $\mathbf{\Sigma}^0_2$, $\mathbf{\Pi}^0_n$, $D(\mathbf{\Pi}^0_n)$, for $n\geq 3$.
It follows that the bounds in Theorem~\ref{thm;main} are optimal. 

In each possible potential class, Hjorth, Kechris, and Louvaeu found a maximal equivalence relation with this potential complexity.
We mention below these maximal equivalence relations for the classes $\mathbf{\Pi}^0_3$, $\mathbf{\Pi}^0_4$, and $\mathbf{\Pi}^0_5$, as these will be used later.

\begin{theorem}[Hjorth-Kechris-Louveau \cite{HKL1998}]\label{thm: HKL max}
Suppose $E$ is an equivalence relation which is induced by a action of a closed subgroup of $S_\infty$. 
\begin{enumerate}
    \item $E$ is potentially $\mathbf{\Pi}^0_3$ if and only if $E\leq_B =_\R^+$;
    \item $E$ is potentially $\mathbf{\Pi}^0_4$ if and only if $E\leq_B =_\R^{++}$;
    \item $E$ is potentially $\mathbf{\Pi}^0_5$ if and only if $E\leq_B =_\R^{+++}$.
\end{enumerate}
\end{theorem}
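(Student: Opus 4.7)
The plan is to establish both directions of each equivalence, with the forward direction (that the maximal relations $=_\R^+$, $=_\R^{++}$, $=_\R^{+++}$ have the stated potential complexities) being routine and the reverse direction doing the bulk of the work.

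For the forward direction, I would argue by induction on the number of $+$'s, using the fact that the Friedman--Stanley jump $F \mapsto F^+$ raises potential Borel complexity by exactly one level at these ranks. For the base case, $x =_\R^+ y$ iff $\forall n \exists m\, (x_n = y_m) \wedge \forall m \exists n\, (x_n = y_m)$, which is visibly $\BPi^0_3$ in the natural topology. For the inductive steps, one verifies that there is a Polish topology on $Y^\omega$ with respect to which $F^+$ is $\BPi^0_{n+1}$ when $F$ is (potentially) $\BPi^0_n$; this uses a careful topology refinement exploiting that only $F$-classes, rather than individual points, need be separated. Since potential complexity is inherited under Borel reducibility, any $E$ Borel-reducing to one of these jumps is potentially of the stated complexity.

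For the reverse direction, the plan is to apply the Becker--Kechris change of topology theorem to reduce to the case where some closed $G \leq S_\infty$ acts continuously on a Polish space $X$ inducing $E$, with $E$ being $\BPi^0_{k+2}$ in the new topology (for the three cases $k = 1, 2, 3$). The goal is to produce a Borel complete invariant for $E$ taking values in the domain of the $k$-fold iterated Friedman--Stanley jump. The heart of the construction is the Vaught transform machinery: for Borel $A \subset X$ and open $U \subset G$, the transforms $A^{*U}$ have controllable Borel complexity and are $G$-invariant, so they encode the orbit of $x$ up to the relevant rank. Stratifying these transforms by the rank of $A$ to match the $\BPi^0_{k+2}$ decomposition of $E$, one assigns to each $x$ an iterated sequence of reals of depth $k$, living in the codomain of the $k$-fold jump.

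The main obstacle is the inductive step: ensuring that these invariants are simultaneously Borel and complete. One has to track how a $\BPi^0_{k+2}$ formula, decomposed as countable intersections of $\BSigma^0_{k+1}$ sets (which are themselves countable unions of the previous level), unfolds against the alternating $\forall \exists$ structure built into the Friedman--Stanley jump. The nested union/intersection pattern in the Borel hierarchy is mirrored precisely by the iterated jump on the target side, which is why the correspondence is sharp and produces the exact matching of levels claimed in the three items.
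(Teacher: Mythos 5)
The paper does not prove this statement: it is quoted from Hjorth--Kechris--Louveau \cite{HKL1998} and used as a black box, so there is no internal proof to compare against --- the citation \emph{is} the paper's proof. Your outline does follow the broad architecture of the actual HKL argument (change of topology, Vaught transforms, induction along the Friedman--Stanley jumps), but as written it has gaps at exactly the two places where the theorem is hard.

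First, in the forward direction, the claim that the jump raises potential complexity by \emph{one} level at these ranks is itself the nontrivial point, and your sketch does not supply the idea that proves it. The naive computation you use for the base case gives an increase of \emph{two} levels per jump: if $F$ is $\BPi^0_n$, then $\exists m\,(x_n \mathrel{F} y_m)$ is $\BSigma^0_{n+1}$ and the outer $\forall n$ makes $F^+$ only $\BPi^0_{n+2}$. This is consistent with the base case ($=_\R$ is $\BPi^0_1$, $=_\R^+$ is $\BPi^0_3$), but iterating it naively would only show $=_\R^{++}$ is potentially $\BPi^0_5$ rather than $\BPi^0_4$. Closing this gap requires the specific HKL lemma that for $\alpha\geq 3$ the $F^+$-condition can be rewritten so that only one quantifier's worth of complexity is added; ``a careful topology refinement exploiting that only $F$-classes need be separated'' names the goal, not the mechanism. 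Second, the reverse direction is the main theorem of \cite{HKL1998}, and your sketch does not say where the hypothesis that $G$ is a closed subgroup of $S_\infty$ is used. It must be used somewhere, since the statement is false for general Polish group actions (for instance $E_1$ is $\BPi^0_3$ but is not Borel reducible to $=_\R^+$); in HKL it enters through the neighborhood basis of open subgroups of $S_\infty$, which is what allows the Vaught transforms $A^{*U}$ to be organized into countable families and hence into invariants of the correct rank --- the step your outline compresses into ``stratifying these transforms by the rank of $A$.'' Since the present paper needs this theorem only as an imported tool, citing \cite{HKL1998} is the appropriate course; a self-contained argument would have to fill in both points above.
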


Here, $=_\R^+$, $=_\R^{++}$, and $=_\R^{+++}$ are iterates of the Friedman-Stanley jump of equality. In general, if $E$ is an equivalence relation on a standard Borel space $X$ we define $E^+$, the \textbf{Friedman-Stanley jump} of $E$, on $X^\N$ by 
\[(x_i)_{i\in \N} \mathrel{E}^+ (y_i)_{i\in \N} \iff \set{[x_i]_E}{i\in \N} = \set{[y_i]_E}{i\in \N}.\]
For example, if $=_\R$ is the equality relation on the reals, then $=_\R^+$ is the equivalence relation on $\R^\N$ identifying two sequences if they enumerate the same set of reals. Similarly, $(=_\R^+)^+$, which we also write as $=_\R^{++}$, can be viewed is ``equality for (hereditarily countable) sets of sets of reals'', and so on.

In particular, the lower bound in Theorem~\ref{thm;main} is equivalent to showing that there is a tame abelian product group action inducing an equivalence relation which is not Borel reducible to $=_\R^{+++}$.

\subsection{The $\Gamma$-jumps of Clemens and Coskey}\label{sec: intro CC jumps}

Recently, Clemens and Coskey \cite{ClemensCoskey} defined new ``gentle'' jump operators, as follows. For a countable group $\Gamma$, and an equivalence relation $E$ on a Polish space $X$, Clemens and Coskey define the \textbf{$\Gamma$-jump of $E$}, denoted $E^{[\Gamma]}$, on $X^\Gamma$ by $
    x \mathrel{E^{[\Gamma]}} y \iff (\exists \gamma\in\Gamma) (\forall \alpha\in\Gamma) x(\gamma^{-1}\alpha) \mathrel{E} y(\alpha).$
    
The $\Gamma$-jumps generalize the usual shift actions of countable groups. For example, the $\mathbb{Z}$-jump of $=_{\{0,1\}}$ is $E_0$ and the $\mathbb{F}_2$-jump of $=_{\{0,1\}}$ is $E_\infty$, where $=_{\{0,1\}}$ is the equality relation on $\{0,1\}$.

Clemens and Coskey show that for many groups $\Gamma$, for example free groups, the $\Gamma$-jump is a jump operator on Borel equivalence relation. Furthermore, they show that the transfinite countable iterated $\Gamma$-jumps of $=_{\{0,1\}}$ are cofinal among all Borel equivalence relation which are induced by actions of the (full support) wreath product $\Gamma\wr\Gamma$.

Precisely ``how fast'' the complexity of the $\Gamma$-jumps increases is still open (see \cite{ClemensCoskey} for some upper and lower bounds). For example, they show that for any countable Borel equivalence relation $E$ on $X$, its $\mathbb{Z}$-jump is Borel reducible to $=^+$, and therefore is potentially $\mathbf{\Pi}^0_3$. Note that in this case $E$ is $\mathbf{\Sigma}^0_2$ and so the definition of $E^{[\Gamma]}$ above is naturally $\mathbf{\Sigma^0_4}$.
Furthermore, for any countable group $\Gamma$ there is a comeager subset $C\subset X^\Gamma$ such that $E^{[\Gamma]}\restriction C$ is Borel reducible to $=^+$ (see the proof of \cite[Theorem 3.5]{ClemensCoskey}). 
In fact, for varying groups $\Gamma$, the equivalence relations $E^{[\Gamma]}$, even when restricted to a comeager set, can be very different (see \cite{Shani2019}).

Clemens and Coskey asked if $E^{[\Gamma]}$ is Borel reducible to $=^+$ for any countable group $\Gamma$ and countable Borel equivalence relation $E$.
We present an unpublished result of the second author, showing that in fact the $\mathbb{Z}^2$-jump is \textit{not} Borel reducible to $=^+$ (equivalently, is not potentially $\mathbf{\Pi}^0_3$).
\begin{theorem}\label{thm: Z2 jump non pi3}
$E_0^{[\mathbb{Z}^2]}$ is not potentially $\mathbf{\Pi}^0_3$.
\end{theorem}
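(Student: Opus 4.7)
The plan is to derive a contradiction from assuming $E_0^{[\mathbb{Z}^2]} \leq_B =_\R^+$, which by Theorem~\ref{thm: HKL max}(1) is equivalent to being potentially $\mathbf{\Pi}^0_3$. Suppose $\varphi : (2^\omega)^{\mathbb{Z}^2} \to (2^\omega)^\omega$ is such a Borel reduction. Fix a countable transitive model $M$ of a sufficient fragment of ZFC containing a Borel code for $\varphi$, and force over $M$ with product Cohen forcing on $(2^\omega)^{\mathbb{Z}^2}$ to add a generic $x = (x_\alpha)_{\alpha \in \mathbb{Z}^2}$. In $M[x]$, the countable set $A := \{\varphi(x)(n) : n \in \omega\}$ is a complete invariant for the $E_0^{[\mathbb{Z}^2]}$-class of $x$.

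The equivalence relation $E_0^{[\mathbb{Z}^2]}$ is induced by the action of $H := \mathbb{Z}^2 \ltimes (\bigoplus_{n \in \omega} \mathbb{Z}/2)^{\mathbb{Z}^2}$, in which $\mathbb{Z}^2$ shifts coordinates and the remainder performs local $E_0$-moves. Each $h \in H$ extends to an automorphism of the forcing and sends $x$ to an $E_0^{[\mathbb{Z}^2]}$-equivalent point; since $\varphi$ has a Borel code in $M$, we have $\varphi(h \cdot x) = h \cdot \varphi(x)$, so $A$ is $H$-invariant as a set of reals in $M[x]$. I would then pass to a symmetric submodel $N \subset M[x]$ built from a filter of subgroups fixing $x$ on finite subsets $F \subset \mathbb{Z}^2$. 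The main technical step is to establish that $A$ has a hereditarily symmetric name in this setup, so that $A \in N$ and by transitivity each $r \in A$ has such a name, forcing $r$ to be a Borel function of only finitely many coordinates: for each $r \in A$ there are a finite $F_r \subset \mathbb{Z}^2$ and a Borel $g_r$ with $r = g_r((x_\alpha)_{\alpha \in F_r})$, descending modulo $E_0$-equivalence in each coordinate. The proof of this step combines the $H$-invariance of $A$, the orbit-stabilizer bound (each $H$-orbit in $A$ is countable because $A$ is), and the support analysis for product Cohen forcing.

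Assuming the finite-support structure, the contradiction comes from observing that any enumeration of $A$ inside $N$ would yield a choice function for a specifically designed $\omega$-sequence of finite sets built from $E_0$-classes at $\mathbb{Z}^2$-translates of fixed finite configurations of coordinates, and the symmetric model $N$ is arranged precisely so that no such choice function exists in $N$. The two-dimensional geometry of $\mathbb{Z}^2$, combined with the $E_0$-ambiguity, prevents any uniform simultaneous selection of representatives. I expect the main obstacle to be the finite-support step: one must argue that the symmetric-model machinery effectively constrains individual $r \in A$ even though these reals are not themselves $E_0^{[\mathbb{Z}^2]}$-invariant. This is also where the contrast with the linearly-ordered group $\mathbb{Z}$ (for which $E_0^{[\mathbb{Z}]}$ \emph{is} potentially $\mathbf{\Pi}^0_3$) becomes essential: the absence of a canonical ordering on $\mathbb{Z}^2$ is what the symmetric model must exploit to block the existence of a choice function.
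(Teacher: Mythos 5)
There is a genuine gap, and it sits at the very first step: the choice of generic. You force with the full-support product Cohen forcing on $(2^\omega)^{\mathbb{Z}^2}$ and hope to contradict the hypothetical reduction at the resulting generic $x$. But, as the paper recalls in Section~\ref{sec: intro CC jumps}, Clemens and Coskey proved that for \emph{any} countable group $\Gamma$ there is a comeager set $C\subset X^\Gamma$ on which $E^{[\Gamma]}\restriction C$ \emph{is} Borel reducible to $=_\R^+$. Hence for a Cohen-generic $x$ the class of $x$ does admit a classifying set of reals definable from its invariant, the model it generates is generated by a set of reals, and no symmetric-submodel argument launched from this generic can produce a contradiction. (A smaller issue: the equivariance $\varphi(h\cdot x)=h\cdot\varphi(x)$ is not meaningful, since $H$ does not act on the codomain; what is true is only that the sets $\set{\varphi(h\cdot x)(n)}{n\in\omega}$ and $\set{\varphi(x)(n)}{n\in\omega}$ coincide. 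But the fatal problem is the generic itself.) The complexity of $E_0^{[\mathbb{Z}^2]}$ is invisible to the standard product topology; it hides in a meager set, so genericity for the full product is exactly the wrong notion.

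The missing idea is to restrict to the meager set of arrays in which each $\mathbb{Z}$-row is periodic with a distinct prime period. The paper does this by exhibiting a Borel reduction of $E_{\alpha_\Z}$ --- the orbit equivalence relation of $\mathbb{Z}\times\Gamma^\omega\curvearrowright\prod_p(2^\Gamma)^p$ with $\Gamma=\mathbb{Z}$, already shown not reducible to $=^+$ in Proposition~\ref{proposition: non Pi3 actions of Z and direct sum} --- into $(E_0^\omega)^{[\Z]}\leq_B E_0^{[\Z^2]}$, sending $x$ to the array whose row at prime $p$ is $l\mapsto (l\bmod p)\cdot x(p)$. The choiceless phenomenon you gesture at does occur, but in that configuration: the single shift group $\mathbb{Z}$ acts ergodically, via the Chinese Remainder Theorem (Example~\ref{example: chinese remainder}), on the poset of finite choice functions through the finite sets $\Z_p\cdot(\Gamma\cdot x_p)$, and the objects among which choice fails are indiscernible $\Gamma$-orbits (countable sets of reals), not individual reals $r\in A$ with finite supports. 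Finally, ``the absence of a canonical ordering on $\mathbb{Z}^2$'' is not the operative mechanism --- $E_0^{[\Z]}$ is itself reducible to $=^+$, and what the second copy of $\mathbb{Z}$ buys is precisely a direction in which to store the prime periods so that the first copy simulates $\prod_p\Z_p$.
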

The key will be to restrict $E_0^{[\mathbb{Z}^2]}$ to a subset of its domain, on which the $\mathbb{Z}^2$ rows are all periodic with distinct prime periods (and this subset is meager with respect to the standard product topology on its domain).
The argument is closely related to the non-potentially $\mathbf{\Pi}^0_3$ action of $\mathbb{Z}\times\Gamma^{\omega}$ described in Section~\ref{subsec;non-pi3}.

\subsection*{Acknowledgments}
We would like to thank Clinton Conley for his support throughout this project, and for many helpful discussions. We would also like to thank John Clemens and Samuel Coskey for sharing early drafts of their paper \cite{ClemensCoskey}.

\section{Preliminaries}\label{sec;prelim}
We will assume familiarity with set theory and the method of forcing, as in \cite{Jech2003} or \cite{Kunen2011}. We will also assume familiarity with the basic theory of Polish spaces and Polish groups, as well as the Borel hierarchy (\cite{Kechris1995} and \cite{Gao2009} are standard references).

We use $\omega$ to denote the set of natural numbers $\N=0,1,2,...$. For an equivalence relation $E$ on $X$ and $x\in X$, its $E$-class is defined by $[x]_E=\set{y\in X}{x \mathrel{E} y}$. For a group $\Gamma$, write $\Gamma^{<\omega}$ for the finite support product $\bigoplus_{n<\omega}\Gamma$.

We use standard forcing notation and terminology as in \cite{Jech2003} or \cite{Kunen2011}.
Given a poset $\P$ and a forcing statement $\phi$, say that $\P\forces\phi$ if all conditions in $\P$ force $\phi$.



\subsection{Classification by countable structures}

Recall that an equivalence relation $E$ on a Polish space $X$ is \textbf{classifiable by countable structures} if it is Borel reducible to the isomorphism relation on the space of models of a countable language.
Equivalently, these are the equivalence relations which are (up to Borel reduction) induced by a continuous action of a closed subgroup of $S_\infty$ (see \cite[Section 1.5]{BeckerKechris1996}). In this paper, we will be focusing on the equivalence relations which are classifiable by countable structures.

Every such equivalence relation admits a complete classification by hereditarily-countable sets, that is, a map $x\mapsto A_x$ from $X$ to the hereditarily-countable sets such that $x \mathrel{E} y\iff A_x = A_y$, via the Scott analysis (see \cite[Chapter 12]{Gao2009}).
From the forcing point of view, the crucial property of such complete classifications $x\mapsto A_x$ is that the map is definable in an absolute way, as follows.
There is a formula $\varphi(x, y, \overline{a})$ in the language of set theory with parameters $\overline{a} \in V$, such that $\varphi(x, A, \overline{a})\iff A = A_x$, and
\begin{itemize}
    \item $\phi(x,A,\bar{a})$ defines a complete classification of $E$ in any model of ZF and
    \item $\phi(x,A,\bar{a})$ is absolute between models of ZF containing $x,A,\bar{a}$.
\end{itemize}
The second clause implies that the invariant $A_x$ does not change when moving to a further generic extension.
We will call any such classification $x\mapsto A_x$ an \textbf{absolute classification} of $E$.
It is well known that the Scott analysis satisfies this properties, see for example \cite[Lemma 2.4]{Friedman2000}, or \cite[Chapter 12.1]{Gao2009}.

For the equivalence relations discussed in this paper, there will always be a very simple absolute classification, with hereditarily countable sets as invariants, and the combinatorial structure of the invariants will be used to study the equivalence relation. Examples include the following.

\begin{example}\label{example:absolute_classifications}
The following are all examples of absolute classifications:
\begin{enumerate}
    \item The identity map $x\mapsto x$ is an absolute classification of $=_\R$, the equality relation on the reals;
    \item The map $x\mapsto \set{x(n)}{n\in\omega}$ is an absolute classification of $=^+$, with sets of reals as invariants;
    \item Similarly, $=^{++}$ and $=^{+++}$ admit absolute classifications with invariants in $\mathcal{P}\mathcal{P}(\R)$, and $\mathcal{P}\mathcal{P}\mathcal{P}(\R)$, respectively;
    \item If $\alpha\colon \Gamma\curvearrowright X$ is a continuous actions of a countable group $\Gamma$ on a Polish space $X$, then the map $x\mapsto \Gamma\cdot x=[x]_{E_\alpha}$ is an absolute complete classification of the induced orbit equivalence relation $E_\alpha$.
\end{enumerate}
\end{example}


\subsection{Symmetric models}

Given a set $A$ in some generic extension of $V$, let $V(A)$ be the minimal transitive exntesion of $V$ which contains $A$ and satisfies ZF. 
$V(A)$ can be seen as the set-theoretic definable closure of $A$ over $V$, as follows.

\begin{fact}
\label{fact;definability-V(A)}
The following holds in the model $V(A)$.
For any set $X$, there is some formula $\psi$, a sequence of parameters $\bar{a}$ from the transitive closure of $A$, and $v \in V$ such that $X$ is the unique set satisfying $\psi(X,A,\bar{a},v)$.
Equivalently, there is a formula $\varphi$ such that $X=\set{x}{\varphi(x,A,\bar{a},v)}$.
\end{fact}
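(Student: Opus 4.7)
The plan is to realize $V(A)$ as an explicit cumulative hierarchy and then read off the definability statement by induction on the stage at which $X$ first appears. Set $M_0 = \mathrm{TC}(\{A\})$, take unions at limits, and at successors let $M_{\alpha+1} = \mathrm{Def}(M_\alpha \cup V_\alpha \cup \{V_\alpha\})$, where $V_\alpha$ is the $\alpha$-th level of the ground model's cumulative hierarchy and $\mathrm{Def}(Y)$ is the set of subsets of $Y$ first-order definable over $(Y, \in)$ with parameters in $Y$. Standard arguments — minimality on one side, closure under the ZF axioms on the other — show that $N := \bigcup_\alpha M_\alpha$ is the model $V(A)$ described in the statement.

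I would then prove by induction on the least $\alpha$ with $X \in M_{\alpha+1}$ that $X$ admits a definition of the advertised shape. At that stage $X$ is definable over $M_\alpha \cup V_\alpha \cup \{V_\alpha\}$ by some formula $\varphi_0$ and finitely many parameters, which split into ground-model parameters (including $\alpha$ itself and $V_\alpha$) that can be packed into a single $v_0 \in V$ by the pairing function of $V$, together with finitely many parameters $c_1, \dots, c_k$ lying in $M_\alpha$. By the inductive hypothesis each $c_i$ is the unique witness of some $\psi_i(c_i, A, \bar{a}_i, v_i)$ with $\bar{a}_i$ a tuple from $\mathrm{TC}(A)$ and $v_i \in V$. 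Concatenating the $\bar{a}_i$ into a single $\bar{a}$ and pairing $v_0, v_1, \dots, v_k$ into a single $v \in V$, I combine the $\psi_i$ and $\varphi_0$ into one formula $\psi(X, A, \bar{a}, v)$ asserting ``$X$ is the unique set definable over the stage coded by $v$ by $\varphi_0$ from parameters decoded from $\bar{a}$ and $v$ via $\psi_1, \dots, \psi_k$.'' Uniqueness of $X$ inside $V(A)$ follows because the construction of the $M_\beta$'s is absolute between $V(A)$ and any outer model of ZF containing $V$ and $A$.

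The equivalence of the two formulations is purely formal: given $\psi$, set $\varphi(x, A, \bar{a}, v) := \exists Y\,(\psi(Y, A, \bar{a}, v) \wedge x \in Y)$ so that $X = \{x : \varphi(x, A, \bar{a}, v)\}$; conversely from such a $\varphi$ recover $\psi(X) := \forall x\,(x \in X \leftrightarrow \varphi(x, A, \bar{a}, v))$. The only genuinely delicate point is verifying that the hierarchy $(M_\beta)_{\beta \le \alpha}$ is itself definable inside $V(A)$ from $A$ and a single $V$-parameter, so that the formula $\psi$ produced above really is a formula in the language of set theory with the advertised parameter types. This reduces to two bookkeeping observations: the recursive clause defining $\mathrm{Def}$ is absolute, and any ordinal $\alpha$ together with the set $V_\alpha$ can be packaged as a single element of $V$.
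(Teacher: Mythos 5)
Your argument is correct, and it is a genuinely different (and more self-contained) route than the one the paper takes. The paper does not prove Fact~\ref{fact;definability-V(A)} from scratch: it appeals to the minimality of $V(A)$ and cites \cite{Shani2021}, where the argument runs top-down --- one observes that the class of sets of $V(A)$ that are hereditarily definable from $A$, finitely many elements of $\mathrm{TC}(A)$, and parameters in $V$ is a transitive class model of ZF containing $V$ and $A$, so by minimality it must be all of $V(A)$. You instead build $V(A)$ bottom-up as a definable hierarchy and extract the defining formula by induction on the first stage at which $X$ appears. The trade-off: the top-down argument is shorter but presupposes both that the HOD-style class models ZF and that a minimal model exists; your construction simultaneously establishes the existence of $V(A)$ and yields the definability statement, at the cost of the bookkeeping you correctly flag at the end (internal definability of the hierarchy from $A$ and a single $V$-parameter coding $\langle V_\beta : \beta \le \alpha\rangle$, plus absoluteness of the $\mathrm{Def}$ operator). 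One small remark: uniqueness of $X$ satisfying your $\psi$ is immediate from the shape of the formula (``$X$ equals the set of all $x$ in the coded structure satisfying $\varphi_0$''), so the appeal to absoluteness there is only needed to see that the formula, evaluated in $V(A)$, picks out the intended set; this is not a gap, just a point worth stating more directly.
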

This follows from the minimality of $V(A)$, see for example the arguments in \cite[p.~8]{Shani2021}.
In this case we say that $X$ is definable in $V(A)$ using $A$, $\bar{a}$ and $v$. In the special case that $\bar{a} = \emptyset$, we say that $X$ is \textbf{definable from $\mathbf{A}$ over $\mathbf{V}$}.
Of particular interest are invariants, for some equivalence relations, which are definable from $A$ over $V$.


\begin{theorem}[{\cite[Lemma 3.6]{Shani2021}}]\label{theorem:invariants_borel_reduction}
Suppose $E$ and $F$ are analytic equivalence relations on $X$ and $Y$ respectively with absolute classifications $x\mapsto A_x$ and $y\mapsto B_y$, and $E$ is Borel reducible to $F$ as witnessed by $f : X \rightarrow Y$.
Then for any $x \in X$ in some generic extension of $V$, $B_{f(x)}$ is definable from $A_x$ over $V$, and furthermore $V(A_x) = V(B_{f(x)})$.
\end{theorem}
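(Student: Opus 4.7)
The plan is to use absoluteness of Borel functions together with the reduction property of $f$ to exhibit $B_{f(x)}$ as definable from $A_x$ over $V$, and symmetrically $A_x$ as definable from $B_{f(x)}$ over $V$. First I would fix a Borel code $c_f \in V$ for $f$, so that $f$ is computed in any generic extension by a formula (with parameters in $V$) that is absolute between models of ZF containing $c_f$. Writing $\varphi_E(\cdot,\cdot,\bar{a}_E)$ and $\varphi_F(\cdot,\cdot,\bar{a}_F)$, with $\bar{a}_E,\bar{a}_F \in V$, for the defining formulas of the given absolute classifications, the key claim is: for any $x'$ living in any generic extension of $V$, if $A_{x'} = A_x$ then $B_{f(x')} = B_{f(x)}$. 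Indeed, in a common generic extension, $A_{x'} = A_x$ implies $x' \mathrel{E} x$ by completeness of the classification, hence $f(x') \mathrel{F} f(x)$ by the reduction property, hence $B_{f(x')} = B_{f(x)}$ by absoluteness of $\varphi_F$ applied in a model containing both sides. This lets me characterize $B_{f(x)}$ as the unique $B$ such that, after forcing over $V(A_x)$ to add some $x' \in X$ with $A_{x'} = A_x$, one has $B = B_{f(x')}$. The only parameters here are $A_x$, $c_f$, $\bar{a}_E$, $\bar{a}_F$, so $B_{f(x)}$ is definable from $A_x$ over $V$, giving in particular $V(B_{f(x)}) \subseteq V(A_x)$.

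For the reverse inclusion I would run the symmetric argument, using crucially that $f$ is a reduction and not merely a homomorphism: if $x'$ lies in some generic extension and satisfies $B_{f(x')} = B_{f(x)}$, then $f(x') \mathrel{F} f(x)$ forces $x' \mathrel{E} x$, so $A_{x'} = A_x$. Thus $A_x$ is the unique $A$ such that in some generic extension of $V(B_{f(x)})$ containing any $x' \in X$ with $B_{f(x')} = B_{f(x)}$, one has $A_{x'} = A$. This shows $A_x \in V(B_{f(x)})$ and completes the equality $V(A_x) = V(B_{f(x)})$.

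The main technical point, and where I expect to spend the most care, is the existence of appropriate $x'$ inside generic extensions of $V(A_x)$ and $V(B_{f(x)})$, which is needed to make the characterizations above genuinely definable. For classifications arising from Scott analysis, the invariant essentially codes a countable structure (or its Scott sentence), and a sufficiently large Levy collapse produces a generic extension in which a concrete $x'$ realizing the prescribed invariant exists. An alternative that sidesteps this issue is to argue first inside the given $V[G] \ni x$, where $x$ itself witnesses existence, and then pull the definition back to the inner model using the forcing relation: the Boolean value of the statement ``some $x' \in X$ has $A_{x'} = A_x$ and $f(x')$ has invariant $B$'' is nonzero precisely when $B = B_{f(x)}$, yielding the required definition inside $V(A_x)$, and analogously for $V(B_{f(x)})$.
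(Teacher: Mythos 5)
The paper itself does not prove this statement---it is quoted from \cite[Lemma 3.6]{Shani2021} with only a one-sentence idea---and your proposal follows exactly that idea: $A_x$ determines $B_{f(x)}$ via the forward direction of the reduction, and $B_{f(x)}$ determines $A_x$ via the backward direction (which is where you correctly use that $f$ is a reduction and not merely a homomorphism). Your key claim, that $A_{x'}=A_x$ implies $B_{f(x')}=B_{f(x)}$ for $x'$ in any generic extension, is right: amalgamate into a common generic extension and use absoluteness of the two classifications together with absoluteness (Shoenfield) of the $\Pi^1_2$ statement that $f$ reduces $E$ to $F$.

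There is, however, one genuine gap. In this paper ``definable from $A_x$ over $V$'' means definable \emph{in the model} $V(A_x)$, which presupposes $B_{f(x)}\in V(A_x)$. Your characterization of $B_{f(x)}$ as ``the unique $B$ such that some condition forces $\exists x'\,(A_{x'}=\check A_x \wedge B_{f(x')}=\check B)$'' only identifies $B_{f(x)}$ among sets that already live in $V(A_x)$ (otherwise $\check B$ makes no sense), and a set of $V[G]$ that is uniquely characterized by a formula with parameters from $V\cup\{A_x\}$ need not lie in $V(A_x)$: think of the generic branch through a Suslin tree, which is definable from the tree yet absent from the ground model. So neither $V(B_{f(x)})\subseteq V(A_x)$ nor the reverse inclusion follows yet, and your closing remark about Boolean values still quantifies over a $B$ whose membership in $V(A_x)$ is exactly the point at issue. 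The standard repair: $V[G]$ is a generic extension of $V(A_x)$ by a quotient poset $\Q$; pick a condition forcing $A_{\dot x'}=\check A_x$, take mutually $\Q\times\Q$-generic filters $H_1,H_2$, and apply your key claim to get $B_{f(x'_{H_1})}=B_{f(x'_{H_2})}=B_{f(x)}$; since mutually generic extensions of $V(A_x)$ intersect in $V(A_x)$, this forces $B_{f(x)}\in V(A_x)$, after which your forcing-relation definition is legitimate. The symmetric step is needed to get $A_x\in V(B_{f(x)})$ and hence the equality $V(A_x)=V(B_{f(x)})$.
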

Let $A=A_x$ and $B=B_{f(x)}$.
The idea is that $f$ induces an injective map between $E$-invariants and $F$-invariants, definable in an absolute manner in any generic extension. So $A$ and $B$ are definable from one another via this map.

Note that there will usually be no $x\in V(A)$ such that $A=A_x$.
We will still refer to such set $A$ as an $E$-invariant (or absolute $E$-invariant), as it is of the for $A_x$ for some $x$ in a further extension.
For example, any set of reals can be an invariant for $=^+$, in \textit{some} generic extension, after adding a countable enumeration of it.

\begin{corollary}
\label{cor : irred to cong}
Suppose $E$ is a Borel equivalence relation on a standard Borel space \( X \), and $x\mapsto A_x$ is an absolute classification of $E$.
Let \(x\) be an element of \( X \) in some generic extension of \( V \), and set $A=A_x$.
\begin{enumerate}
    \item If $E\leq_B {=^+_\R}$ then there is a set of reals $B\in V(A)$ such that $B$ is definable from $A$ over $V$ and $V(A)=V(B)$.
    \item If $E\leq_B =_\R^{++}$ then there is a set of sets of reals $B\in V(A)$ such that $B$ is definable from $A$ over $V$ and $V(A)=V(B)$.
    \item If $E\leq_B =_\R^{+++}$ then there is a set of sets of sets of reals $B\in V(A)$ such that $B$ is definable from $A$ over $V$ and $V(A)=V(B)$.    
\end{enumerate}
\end{corollary}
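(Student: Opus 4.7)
The plan is to deduce all three clauses uniformly from Theorem~\ref{theorem:invariants_borel_reduction} combined with the absolute classifications of $=_\R^+$, $=_\R^{++}$, and $=_\R^{+++}$ recorded in Example~\ref{example:absolute_classifications}. First I would fix the target jump $F \in \{=_\R^+,\, =_\R^{++},\, =_\R^{+++}\}$ and recall from that example that $F$ admits an absolute classification $y \mapsto B_y$ whose invariants lie in $\mathcal{P}(\R)$, $\mathcal{P}\mathcal{P}(\R)$, or $\mathcal{P}\mathcal{P}\mathcal{P}(\R)$, respectively; explicitly, $B_y = \{\, [y(n)]_{F_0} : n \in \omega\,\}$ where $F_0$ is the one-lower jump (with $=_\R$ at the base).

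Given the Borel reduction $f \colon X \to Y$ witnessing $E \leq_B F$, I would let $x$ be the element of $X$ in the given generic extension of $V$, and set $y = f(x)$ and $B = B_y$. Since $f$ is Borel, its graph is absolute between $V$ and the generic extension, so $y$ and hence $B$ lie in the same extension. Now Theorem~\ref{theorem:invariants_borel_reduction} applies directly with the absolute classifications $x \mapsto A_x$ and $y \mapsto B_y$: it yields that $B$ is definable from $A$ over $V$, and that $V(A) = V(B)$. Since $B$ is of the claimed form in each of the three cases, this gives exactly the conclusion of the corollary.

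The only thing requiring any care is checking that the maps from Example~\ref{example:absolute_classifications}(2)--(3) genuinely satisfy the two clauses of the ``absolute classification'' definition (correctness in every model of ZF and absoluteness between models containing the relevant parameters), but this is immediate from the explicit formula $B_y = \{\, [y(n)]_{F_0} : n \in \omega\,\}$ together with induction on the level of the jump. There is no real obstacle here; the statement is essentially a packaging of Theorem~\ref{theorem:invariants_borel_reduction} specialized to the three target equivalence relations, and the content of the corollary is precisely that, under $E \leq_B =_\R^{+k}$, the symmetric model $V(A)$ can be presented as $V(B)$ for some $B$ at the $k$-th level of the power-set hierarchy over $\R$, a form which will be convenient for the forcing arguments of Section~\ref{sec;lower-bounds}.
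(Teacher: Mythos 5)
Your proposal is correct and follows essentially the same route as the paper, which likewise deduces all three clauses by applying Theorem~\ref{theorem:invariants_borel_reduction} to the absolute classifications of $=_\R^+$, $=_\R^{++}$, $=_\R^{+++}$ from Example~\ref{example:absolute_classifications}. The extra details you supply (setting $y=f(x)$, checking absoluteness of the classifying maps) are exactly the routine verifications the paper leaves implicit.
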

\begin{proof}
Part (1) follows from Theorem~\ref{theorem:invariants_borel_reduction}, since $=_\R^+$ admits an absolute classification with sets of reals as invariants. Similarly for parts (2) and (3).
\end{proof}
The above is used to prove Borel irreducibility results. For example, to prove that $E$ is not Borel reducible to $=^+$, we will look for a ``sufficiently complicatd'' invariant $A$ for $E$ such that the model $V(A)$ cannot be generated by any set of reals.

The invariant $A$ will often be $A_x$ where $x$ is a Cohen-generic real, in this case we get that the irreducibility persists when restricting $E$ to comeager sets (see \cite[Theorem 3.8]{Shani2021}).
This can also be used to study Borel homomorphisms on comeager sets. For example:
\begin{lemma}[{\cite[Lemma 2.5]{Shani2019}}]\label{lem;generic-erg-symm-model}
Suppose $E$ and $F$ are Borel equivalence relations on $X$ and $Y$ respectively and $x\mapsto A_x$ and $y\mapsto B_y$ are absolute classifications of $E$ and $F$ respectively.
The following are equivalent.
\begin{enumerate}
    \item For every partial Borel homomorphism  $f\colon E\rightarrow_B F$, defined on some non-meager set, $f$ maps a non-meager set into a single $F$-class;
    \item If $x\in X$ is Cohen-generic over $V$ and $B$ is an $F$-invariant in $V(A_x)$ which is definable from $A_x$ over $V$, then $B\in V$.
\end{enumerate}
\end{lemma}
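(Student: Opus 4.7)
The plan is to prove the equivalence by contrapositives, translating between Borel partial homomorphisms $X \to Y$ and definability in the symmetric model $V(A_x)$ via Cohen forcing absoluteness.

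For $(1) \Rightarrow (2)$, assume $(1)$ fails: a Borel partial function $f\colon X \to Y$ is an $E$-to-$F$ homomorphism on a non-meager Borel set $U$, and no non-meager subset of $U$ is mapped by $f$ into a single $F$-class. Fix $x \in U$ Cohen-generic over $V$ and set $B := B_{f(x)}$. Since $f$ is an $E$-to-$F$ homomorphism and $A_x$ is a complete $E$-invariant, $B$ depends only on $A_x$; combined with absoluteness of both classifications and using the Borel code of $f$ as a parameter in $V$, this yields a formula defining $B$ uniquely from $A_x$, so $B \in V(A_x)$ and $B$ is definable from $A_x$ over $V$. Were $B \in V$, the forcing theorem would produce a Cohen condition $p$ forcing $\dot{x} \in U \wedge B_{f(\dot{x})} = \check{B}$, and the comeager set of Cohen-generics in $U \cap [p]$ would be mapped by $f$ into the single $F$-class with invariant $B$, contradicting the hypothesis on $f$. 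Thus $B \notin V$ and $(2)$ fails.

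For $(2) \Rightarrow (1)$, assume $(2)$ fails: there is a Cohen-generic $x$ and an $F$-invariant $B \in V(A_x) \setminus V$ defined from $A_x$ by a formula $\varphi(X, A_x, v)$ with $v \in V$. By absoluteness, the rule $x \mapsto B(x)$ sending Cohen-generic $x$ to the unique set satisfying $\varphi(X, A_x, v)$ is well-defined, $E$-invariant, and absolutely definable from $x$ on a comeager Borel set. Since the absolute classifications in Example~\ref{example:absolute_classifications} are realized by picking enumerations of the countable pieces of $B(x)$, one sees that in $V[x]$ there is $y$ with $B_y = B(x)$; the analytic relation $R(x, y) \equiv [B_y = B(x)]$ therefore has non-empty sections on a comeager set, and Jankov--von Neumann uniformization produces a Borel partial function $f$ on a comeager set with $B_{f(x)} = B(x)$. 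This $f$ is an $E$-to-$F$ homomorphism because $A_x = A_{x'}$ implies $B(x) = B(x')$, whence $B_{f(x)} = B_{f(x')}$ and $f(x) \mathrel{F} f(x')$. If $f$ sent a non-meager Borel set $U'$ into a single $F$-class with invariant $B^*$, then $B(\cdot) \equiv B^*$ on $U'$; choosing $x, x' \in U'$ mutually Cohen-generic over $V$ gives $B^* \in V[x] \cap V[x'] = V$, contradicting $B(x) \notin V$ for the original Cohen-generic.

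The main obstacle is the uniformization step in the second direction: realizing each hereditarily countable invariant $B(x)$ as $B_{f(x)}$ via a Borel function of $x$ requires both realization (in $V[x]$, using choice together with the absolute classifications of Example~\ref{example:absolute_classifications}) and a Borel selection, which is achieved on a comeager set via Jankov--von Neumann uniformization of the induced analytic relation. A secondary technical point is the appeal to $V[x] \cap V[x'] = V$ for mutually Cohen-generic $x, x'$, which is a standard product forcing argument.
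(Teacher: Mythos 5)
The paper itself gives no proof of this lemma --- it is quoted from \cite{Shani2019} --- so I am assessing your argument on its own terms. A cosmetic point first: your direction labels are swapped. Under the heading ``$(1)\Rightarrow(2)$'' you assume $(1)$ fails and derive that $(2)$ fails, which is the contrapositive of $(2)\Rightarrow(1)$, and symmetrically for the other heading. Since both contrapositives appear, the equivalence is still covered; just relabel. The paragraph deriving $\neg(2)$ from $\neg(1)$ is essentially the standard argument and is fine, provided you note that the definability of $B_{f(x)}$ from $A_x$ uses the homomorphism version of Theorem~\ref{theorem:invariants_borel_reduction} (whose proof only needs that $A_{x'}=A_{x''}$ implies $B_{f(x')}=B_{f(x'')}$, an absolute $\BPi^1_1$ consequence of $f$ being a homomorphism).

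The genuine gap is the realization step in the other direction. You need, for comeager many $x$, some $y\in Y$ with $B_y=B(x)$, produced in a Borel-in-$x$ fashion, and you justify the existence of such a $y$ in $V[x]$ by appeal to the particular classifications of Example~\ref{example:absolute_classifications}. But the lemma quantifies over \emph{all} absolute classifications, and the paper explicitly warns (after Theorem~\ref{theorem:invariants_borel_reduction}) that an $F$-invariant $B\in V(A_x)$ is typically \emph{not} of the form $B_y$ for any $y$ in that model --- it is only realized in a further generic extension (for $=^+$ one must add an enumeration of $B$, which is impossible inside $V[x]$ if $B$ is uncountable there). Without realization in $V[x]$, uniformly in $x$, the relation $R(x,y)\equiv[B_y=B(x)]$ has empty sections and no homomorphism $f$ is produced; this is exactly where the content of the lemma lives, and your sketch does not supply it. Two secondary points in the same direction: (i) the set $B(x)$ is only defined for $x$ generic over $V$, of which there are none in $V$, so before applying Jankov--von Neumann you must exhibit $R$ as an analytic set \emph{with a code in the ground model}, e.g.\ by routing the definition through the forcing relation or a countable elementary submodel; and (ii) in the final contradiction you should restrict the domain of $f$ to $[p_0]$ for a condition $p_0$ forcing ``$B(\dot x)\notin\check V$'', since otherwise a non-meager $U'$ mapped into a single class might only meet generics where $B(\cdot)$ happens to lie in $V$, and the mutual-genericity argument (which is otherwise correct) yields no contradiction.
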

\begin{remark}
Suppose $E$ is generically ergodic. Then condition (1) in Lemma~\ref{lem;generic-erg-symm-model} is equivalent to: 
$E$ is generically $F$-ergodic. That is, every partial Borel homomorphism from $E$ to $F$, defined on a comeager set, sends a comeager set into a single $F$-class.
\end{remark}

Given a set $A$ appearing in a generic extension of $V$, say that two elements $x_1, x_2 \in V(A)$ \textbf{have the same type over} $\mathbf{V, A}$ if for any formula $\varphi$ in the language of set theory and tuple $v$ of parameters in $V$, we have $V(A) \models \varphi(x_1, A, v) \leftrightarrow \varphi(x_2, A, v)$.
In other words, $x_1,x_2$ are indiscernible over $V,A$.

\section{Motivation}\label{sec: motivation}
Towards proving the lower bound of Theorem~\ref{thm;main}, we must define some ``sufficiently complicated'' actions of our product groups. This section is dedicated to motivating the definitions of these actions, and explain how these definitions are tailored to provide the desired irreducibility results.

With the point of view of Theorem~\ref{theorem:invariants_borel_reduction}, we are set to search for actions with ``sufficiently complicated'' orbits, in terms of set theoretic definability.
We explain first how the (very) well known irreduciblity ${E_0} \not\leq_B {=_\R}$ is presented via the point of view of definability of invariants.
While this is simply the well known ergodicity proof, the rest of the irreducibility proofs in this paper will in fact follow precisely the same argument, presented in a generalized context.

\subsection{Ergodicity and weakly homogenous forcing}
Recall the equivalence relation $E_0$ on $2^\omega$, defined by $x\mathrel{E_0} y$ if and only if $x(n)$ and $y(n)$ agree for all but finitely many values of $n$.
In other words, $E_0$ is the orbit equivalence relation of the action of $\mathbb{Z}_2^{<\omega}$ on $2^\omega$.
The map $x\mapsto \mathbb{Z}_2^{<\omega}\cdot x=[x]_{E_0}$, sending $x$ to its orbit, is an absolute complete classification of $E_0$.

Let $x\in 2^\omega$ be Cohen generic over $V$.
That is, let $\P$ be the Cohen poset of finite partial functions from $\omega$ to $2$, ordered by reverse extension. Let $G\subset\P$ be a generic filter over $V$, and $x\in 2^\omega$ the real associated with the function $\bigcup G$.
\begin{claim}[see Lemma~\ref{lem;weak-hom}]\label{claim;levy}
Let $A=[x]_{E_0}=\set{y\in 2^\omega}{y\mathrel{E_0}x}$.
If $B\in V(A)$ is definable from $A$ over $V$, and $B\subseteq V$, then $B\in V$.
\end{claim}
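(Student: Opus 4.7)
The plan is to exploit the natural $\Z_2^{<\omega}$-symmetries of the Cohen poset $\P$ and run a standard weak-homogeneity argument. First I would apply Fact~\ref{fact;definability-V(A)} to fix a formula $\varphi$ and a parameter $v \in V$ so that $B=\set{b\in V}{V(A)\models\varphi(b,A,v)}$. Letting $\dot A$ be the canonical $\P$-name for $[x]_{E_0}$, I can then take $\dot B$ to be the $\P$-name $\set{(\check b,p)}{b\in V,\ p\in\P,\ p\forces \varphi(\check b,\dot A,\check v)}$, whose interpretation by $G$ is $B$. The claim reduces to showing that for every $b\in V$, the empty condition decides the statement ``$\check b\in\dot B$''.

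The next step is to set up the symmetries. For $g\in\Z_2^{<\omega}$ and $p\in\P$, define $(\hat g\cdot p)(n)=p(n)+g(n)\pmod 2$ on $\mathrm{dom}(p)$; each $\hat g$ is an order-automorphism of $\P$, lifting canonically to an automorphism of the $\P$-name space. Since a finite bit-flip of the generic real leaves its $E_0$-class unchanged, the name $\dot A$ is fixed by $\hat g$ (here the absoluteness noted in Example~\ref{example:absolute_classifications}(4) is what guarantees the orbit really is described the same way in both the shifted and original extensions). Because $\check v$ is automatically fixed, it follows that for every $b\in V$ and every $p\in\P$, $p\forces \check b\in\dot B$ if and only if $\hat g\cdot p\forces \check b\in\dot B$.

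The combinatorial core is a one-line homogeneity observation: given any $p,q\in\P$, setting $g(n)=p(n)\oplus q(n)$ on $\mathrm{dom}(p)\cap\mathrm{dom}(q)$ and $g(n)=0$ elsewhere makes $\hat g\cdot p$ agree with $q$ on their common domain, so $\hat g\cdot p$ and $q$ are compatible in $\P$. To finish, suppose $B\notin V$; then there exist $b\in V$ and conditions $p,q\in\P$ with $p\forces \check b\in\dot B$ and $q\forces \check b\notin\dot B$. Applying the homogeneity yields $\hat g\cdot p$ compatible with $q$, and by the invariance from the previous paragraph $\hat g\cdot p\forces \check b\in\dot B$ as well; any common refinement then forces both $\check b\in\dot B$ and $\check b\notin\dot B$, a contradiction. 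I do not anticipate a serious obstacle here: the whole argument is the standard weak-homogeneity/ergodicity proof for Cohen forcing, and the only subtle point is confirming that $\hat g$ fixes $\dot A$, which is precisely where the absoluteness of $x\mapsto[x]_{E_0}$ does the work.
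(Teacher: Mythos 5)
Your proof is correct and follows essentially the same route as the paper: the paper proves this claim as the special case of its Lemma~\ref{lem;weak-hom} (via Lemma~\ref{lem;gen-orbit-type}) for the ergodic action of $\Z_2^{<\omega}$ on the Cohen poset, which is exactly your bit-flip homogeneity argument showing the empty condition decides each $\check b\in\dot B$. The one point the paper makes slightly more explicit is that $\varphi$ should be relativized to $V(A)=V[x]$ before applying the forcing argument, but this does not affect the substance.
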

We will identify real numbers with subsets of $\omega$, and thus elements of $2^\omega$, so that the following makes sense:
\begin{corollary}
In particular, if $B\in V(A)$ is a real, definable from $A$ over $V$, then $B\in V$, and therefore $V(B)\neq V(A)$. 
Since the reals are complete invariants for $=_\R$, it follows from Theorem~\ref{theorem:invariants_borel_reduction} that $E_0$ is not Borel reducible to $=_\R$. In fact, the previous claim is equivalent to the generic ergodicity of the $\mathbb{Z}_2^{<\omega}$ action, by Lemma~\ref{lem;generic-erg-symm-model}.
\end{corollary}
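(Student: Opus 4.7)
My plan is to exploit the homogeneity of Cohen forcing under the natural action of $\Gamma := \mathbb{Z}_2^{<\omega}$, which permutes conditions but fixes the $E_0$-orbit of the generic. The group $\Gamma$ acts on the Cohen poset $\P = 2^{<\omega}$ by $(\sigma \cdot p)(n) = p(n) \oplus \sigma(n)$, giving a group of automorphisms of $\P$ which lifts canonically to an action on $\P$-names. Two observations drive the argument. First, $\Gamma$ acts transitively on conditions of each fixed length: for any $p, q \in \P$ with $|p| = |q|$ there is $\sigma \in \Gamma$ with $\sigma \cdot p = q$. Second, if $\dot{x}$ is the canonical name for the generic and $\dot{A}$ is the canonical name for $[\dot{x}]_{E_0}$, then since $\sigma \cdot x \mathrel{E_0} x$ for every $\sigma \in \Gamma$, the automorphism $\sigma$ fixes $\dot{A}$ in the sense that $\P \forces \sigma \dot{A} = \dot{A}$.

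Now suppose $B \in V(A)$ is definable from $A$ over $V$, so $B = \set{z}{\varphi(z, A, v)}$ for some formula $\varphi$ and parameter $v \in V$, and assume moreover that $B \subseteq V$. Fix an arbitrary $b \in V$. By the forcing theorem some condition $p \in \P$ decides the statement ``$V(\dot{A}) \models \varphi(\check{b}, \dot{A}, \check{v})$''; without loss of generality $p$ forces this. For any $q \in \P$ with $|q| = |p|$, pick $\sigma \in \Gamma$ with $\sigma \cdot p = q$ and apply automorphism-invariance of the forcing relation: since $\sigma \check{b} = \check{b}$, $\sigma \check{v} = \check{v}$, and $\sigma \dot{A}$ is forced equal to $\dot{A}$, the condition $q = \sigma \cdot p$ also forces the same statement. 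Hence every condition of length $|p|$ forces it, so by density the empty condition forces ``$\check{b} \in \dot{B}$''.

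Consequently, for each $b \in V$, the membership $b \in B$ is equivalent to the assertion $\emptyset \forces_\P \varphi(\check{b}, \dot{A}, \check{v})$, a statement entirely about $V$ (using the parameters $\varphi$, $v$, and $\P$, all of which lie in $V$). Thus $B = \set{b \in V}{\emptyset \forces_\P \varphi(\check{b}, \dot{A}, \check{v})}$ is definable in $V$, so by comprehension in $V$ we conclude $B \in V$.

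The main subtlety is ensuring that $\sigma \dot{A}$ and $\dot{A}$ are genuinely forced to denote the same set. This requires a careful choice of the canonical $\P$-name for the $E_0$-orbit; taking, for instance, the name whose interpretation is explicitly $\set{\tau \cdot \dot{x}}{\tau \in \Gamma}$, the induced $\Gamma$-action on this name reproduces it up to internal rearrangement, which suffices for the forcing-equality $\P \forces \sigma \dot{A} = \dot{A}$. Beyond this bookkeeping, the argument is precisely the standard homogeneity-of-Cohen-forcing proof of generic ergodicity of $E_0$, phrased in the language of symmetric submodels, and Lemma~\ref{lem;weak-hom} referenced in the statement will presumably abstract exactly this pattern to weakly homogeneous forcings.
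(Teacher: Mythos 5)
Your argument is correct and is essentially the paper's own proof: the paper packages the same homogeneity computation as Lemma~\ref{lem;gen-orbit-type} and Lemma~\ref{lem;weak-hom} for a general ergodic action $\Gamma\curvearrowright\P$ (Definition~\ref{defn: poset ergodicity}), of which your bit-flip action of $\mathbb{Z}_2^{<\omega}$ on the Cohen poset, fixing the name for the orbit $\Gamma\cdot\dot{x}$ and forcing membership by a predense set of translated conditions, is exactly the instance invoked in Claim~\ref{claim;levy}. The remaining assertions of the corollary --- that $V(B)=V\neq V[x]=V(A)$, the irreducibility via Theorem~\ref{theorem:invariants_borel_reduction}, and the equivalence with generic ergodicity via Lemma~\ref{lem;generic-erg-symm-model} --- are immediate citations, as in the paper.
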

Claim~\ref{claim;levy} goes back to Levy (see \cite{Kanamori2006}), who showed that after adding a generic Cohen real $x$ to $L$, HOD calculated in $L[x]$ is $L$. In more modern terms: a weakly homogenous forcing adds no new HOD sets. 
We generalize this as follows.

\begin{definition}\label{defn: poset ergodicity}
Let $\mathbb{P}$ be a poset, $\Gamma$ a group and $a\colon \Gamma\curvearrowright\mathbb{P}$ an action of $\Gamma$ on $\mathbb{P}$ by automorphisms.
Say that the action is \textbf{ergodic} if for any $p,q\in\mathbb{P}$ there is $\gamma\in\Gamma$ such that $\gamma\cdot p$ is compatible with $q$.
\end{definition}
Recall that $\P$ is \textbf{weakly homogenous} if for any $p,q\in\P$ there is some automorphism of $\P$ sending $p$ to be compatible with $q$. That is, the entire group of automorphisms of $\P$ acts ergodically on $\P$.

The following lemmas generalize the well-known facts for weakly homogeneous posets to ergodic actions on posets, with essentially the same proofs.
Note that both lemmas are proved in ZF set theory, with no use of choice.

\begin{lemma}\label{lem;gen-orbit-type}
Suppose that $\Gamma \curvearrowright \P$ is ergodic and $x$ is $\mathbb{P}$-generic over $V$. For any $v\in V$ and formula $\varphi$ in the language of set theory, $\varphi(\Gamma\cdot x,v)$ holds in $V[x]$ if and only if $\mathbb{P}\forces \varphi(\Gamma\cdot\dot{x},\check{v})$.
\end{lemma}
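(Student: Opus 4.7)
The plan is to adapt the classical argument for weakly homogeneous forcing (that any statement about the generic with ground-model parameters is decided by every condition), replacing the full automorphism group of $\mathbb{P}$ with $\Gamma$ and the generic $\dot{x}$ with its $\Gamma$-orbit. The $(\Leftarrow)$ direction is immediate from the forcing theorem. For $(\Rightarrow)$, I argue by contradiction: assume $V[x]\models\varphi(\Gamma\cdot x,v)$ but that some $q\in\mathbb{P}$ forces $\neg\varphi(\Gamma\cdot\dot{x},\check{v})$.

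By the truth lemma there is some $p$ in the generic filter with $p\forces\varphi(\Gamma\cdot\dot{x},\check{v})$, and ergodicity of the action supplies $\gamma\in\Gamma$ such that $\gamma\cdot p$ is compatible with $q$. I then invoke the standard fact that any automorphism $\pi$ of $\mathbb{P}$ lifts to an action on $\mathbb{P}$-names (via $\pi(\dot\tau)=\{(\pi(r),\pi(\sigma)):(r,\sigma)\in\dot\tau\}$) and satisfies
\[ p\forces\psi(\dot\tau)\iff\pi(p)\forces\psi(\pi(\dot\tau)) \]
for every formula $\psi$ and name $\dot\tau$. Applied with $\pi$ the automorphism given by $\gamma$, and using the standard fact that $\check{v}$ is fixed by every such $\pi$ (so $\gamma(\check{v})=\check{v}$), it suffices to exhibit a $\mathbb{P}$-name $\dot{O}$ for the orbit $\Gamma\cdot x$ with the property that $\gamma(\dot{O})$ is forcing-equivalent to $\dot{O}$; this will yield $\gamma\cdot p\forces\varphi(\dot{O},\check{v})=\varphi(\Gamma\cdot\dot{x},\check{v})$, contradicting compatibility with $q$.

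The main step, and the only one requiring care, is choosing this $\Gamma$-fixed name for the orbit. I will take
\[ \dot{O}=\{(\check{1},\pi_{\gamma'}(\dot{x})) : \gamma'\in\Gamma\}, \]
where $\pi_\eta$ denotes the automorphism of $\mathbb{P}$ corresponding to $\eta\in\Gamma$. Since $\Gamma\curvearrowright\mathbb{P}$ is a group action we have $\pi_\delta\circ\pi_{\gamma'}=\pi_{\delta\gamma'}$, so as a literal set of pairs $\pi_\delta(\dot{O})=\{(\check{1},\pi_{\delta\gamma'}(\dot{x})):\gamma'\in\Gamma\}=\dot{O}$. Using the identity $(\pi(\dot\tau))^G=\dot\tau^{\pi^{-1}[G]}$, the interpretation $\dot{O}^G$ enumerates $\{\gamma'^{-1}\cdot x:\gamma'\in\Gamma\}=\Gamma\cdot x$ in every generic extension $V[G]$, so $\dot{O}$ really is a name for the orbit. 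With this choice the previous paragraph closes the argument. I do not anticipate any deeper obstacle beyond the bookkeeping with the $\Gamma$-action on names, and the entire argument stays within ZF, as the passage requires for the later symmetric-model applications.
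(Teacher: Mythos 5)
Your proposal is correct and follows essentially the same route as the paper's proof: ergodicity supplies $\gamma$ making $\gamma\cdot p$ compatible with an arbitrary condition, and the key point is that the canonical name for the orbit $\Gamma\cdot\dot{x}$ is fixed by the induced $\Gamma$-action on names, so $\gamma\cdot p$ still forces $\varphi(\Gamma\cdot\dot{x},\check{v})$. The only difference is presentational: the paper phrases the conclusion as predensity of the set of conditions forcing $\varphi$ and cites the standard symmetry lemma for the invariance of the name, whereas you argue by contradiction and verify the invariance of an explicitly constructed name $\dot{O}$ (modulo the harmless slip of writing $\check{1}$ for the maximal condition $1_{\P}$).
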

In other words, the type of the generic orbit $\Gamma\cdot x$ over $V$ is decided in $V$.
In particular, any two generic orbits $\Gamma\cdot x$ and $\Gamma\cdot y$ have the same type over $V$.
\begin{proof}
It suffices to show that if $p\forces \varphi(\Gamma\cdot\dot{x},v)$ then the set of $q\in\mathbb{P}$ forcing $\varphi(\Gamma\cdot\dot{x},v)$ is predense in $\mathbb{P}$.
Indeed, given any $r\in\mathbb{P}$, by ergodicity there is some $\gamma\in\Gamma$ such that $\gamma\cdot p$ and $r$ are compatible.
Since $\gamma$ is an automorphism of $\mathbb{P}$ which fixes the name $\Gamma\cdot\dot{x}$, it follows that $\gamma\cdot p\forces\varphi(\Gamma\cdot\dot{x},v)$ (see for example \cite[Lemma 14.37]{Jech2003}).
Let $q$ be a common extension of $r$ and $\gamma\cdot p$, then $q$ is as desired.
\end{proof}

\begin{lemma}\label{lem;weak-hom}
Suppose that $\Gamma\curvearrowright \mathbb{P}$ is ergodic, and $x$ is $\mathbb{P}$-generic over $V$.
Then any set $B\subseteq V$ in $V[x]$ which is definable from the orbit $\Gamma\cdot x$ and parameters in $V$, must in fact be in $V$.
\end{lemma}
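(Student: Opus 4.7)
The plan is to leverage Lemma~\ref{lem;gen-orbit-type} in essentially the same way one reduces membership in a HOD-like set to a forcing-theoretic question in the weak homogeneity setting. By hypothesis, fix a formula $\varphi$ and a parameter $v\in V$ such that in $V[x]$ we have $B = \set{y}{\varphi(y,\Gamma\cdot x,v)}$. Because $B\subseteq V$, the only $y$'s that can possibly enter $B$ are already in $V$, and so it suffices to decide, for each $y\in V$, whether $\varphi(y,\Gamma\cdot x,v)$ holds in $V[x]$.

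First I would observe that for a fixed $y\in V$, the statement $\varphi(y,\Gamma\cdot x,v)$ is a statement about the generic orbit $\Gamma\cdot x$ whose remaining parameters $(y,v)$ both lie in $V$. Applying Lemma~\ref{lem;gen-orbit-type} with the formula $\varphi(\check{y},\cdot,\check{v})$ (i.e.\ absorbing $y$ into the $V$-parameter slot alongside $v$), either $\mathbb{P}\forces\varphi(\check{y},\Gamma\cdot\dot{x},\check{v})$ or $\mathbb{P}\forces\neg\varphi(\check{y},\Gamma\cdot\dot{x},\check{v})$, and exactly one of these forcing statements determines whether $y\in B$. Hence
\[
B \;=\; \set{y\in V}{\mathbb{P}\forces \varphi(\check{y},\Gamma\cdot\dot{x},\check{v})},
\]
which is a definition taking place entirely inside $V$ (using $v$ and the name $\Gamma\cdot\dot{x}$, both of which are in $V$). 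Therefore $B\in V$, as desired.

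The only real subtlety is a bookkeeping one: we must make sure Lemma~\ref{lem;gen-orbit-type} applies uniformly as $y$ ranges over $V$, which is fine because the lemma quantifies over all $V$-parameters, so we are free to treat $y$ as an additional parameter for each instance. No other obstacle is expected, as the argument is a verbatim generalization of the classical weak homogeneity proof, with ``some automorphism of $\mathbb{P}$ sending $p$ to be compatible with $q$'' replaced by ``some $\gamma\in\Gamma$ such that $\gamma\cdot p$ is compatible with $q$,'' which is precisely what ergodicity of the action $\Gamma\curvearrowright\mathbb{P}$ provides, and which has already been exploited in the proof of Lemma~\ref{lem;gen-orbit-type}.
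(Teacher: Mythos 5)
Your proof is correct and follows essentially the same route as the paper: both absorb the candidate element $y$ into the $V$-parameter slot, apply Lemma~\ref{lem;gen-orbit-type} to the membership statement, and conclude that $B$ is definable in $V$ as $\set{y\in V}{\mathbb{P}\forces\varphi(\check{y},\Gamma\cdot\dot{x},\check{v})}$.
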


\begin{proof}
Let $A=\Gamma\cdot x$, and note that $V[x]=V(A)$.
Suppose $B$ is as above, fix a formula $\varphi$ and parameter $u\in V$ such that, in $V(A)$, $v\in B\iff \varphi(v,A,u)$.
Applying Lemma~\ref{lem;gen-orbit-type} to the statement $\varphi^{V(A)}(v,A,u)$, we conclude that $B$ can be defined in $V$ as the set of $v$ such that $\mathbb{P}\forces \varphi(\check{v},\dot{A},\check{u})$. 
\end{proof}




\begin{example}
The notion of ergodicity in Definition~\ref{defn: poset ergodicity} directly generalizes the usual, either measure theoretic or category theoretic, notions of ergodicity.
\begin{enumerate}
    \item Suppose $X$ is a Polish space and $\Gamma$ acts on $X$ by homeomorphisms. Let $\mathbb{P}$ be Cohen forcing on $X$ (forcing with open subsets), then $\Gamma$ extends to an action on $\mathbb{P}$.
    Furthermore, the action of $\Gamma$ on $X$ is generically ergodic if and only if the corresponding action of $\Gamma$ on $\mathbb{P}$ is ergodic.
    \item Suppose $(X,\nu)$ is a standard measure space and $\Gamma$ acts on $X$ by measure preserving transformations. Let $\mathbb{P}$ be Random real forcing on $X$, then $\Gamma$ extends to an action on $\mathbb{P}$. Furthermore, the action of $\Gamma$ on $X$ is ergodic if and only if the corresponding action of $\Gamma$ on $\mathbb{P}$ is ergodic.
\end{enumerate}
\end{example}

In many applications the weak homogeneity of a poset $\P$ is realized by an ergodic action of a countable group on $\P$. In the result of Levy mentioned above, it is the action of $\mathbb{Z}_2^{<\omega}$ on the Cohen poset $\P$, arising by (1) above from the action of $\mathbb{Z}_2^{<\omega}$ on $2^\omega$. 
In the study of the so called ``basic Cohen model'' (see \cite{Jech2003,Kanamori2008}), it is the group of all finite permutations of $\omega$, acting by permuting the index set of a finite support iteration of $\P$.

The main point of Definition~\ref{defn: poset ergodicity} is to allow for the usual ergodicity arguments, for example, that an ergodic action is not concretely classifiable, in the context of a countable group acting on a \textit{non-Polish} space, such as a forcing poset in some choiceless model. 
A central example for us is the following, which will be used to find interesting actions of $\mathbb{Z}_2^{<\omega}\times\Gamma^\omega$ for an arbitrary countable group $\Gamma$.

\begin{example}\label{ex;finite-choice}
Let $\bar{A} = \seqq{A_n}{n<\omega}$ and suppose $\Sigma_n \curvearrowright A_n$ is a transitive action of a countable group $\Sigma_n$ on some set $A_n$, for each $n$.
Let $\P_{\bar{A}}$ be the poset of all finite partial choice functions in $\bar{A}$.
That is, all finite functions $p$ with $\mathrm{dom}(p)\in\omega$ and $p(i)\in A_i$ for $i\in \mathrm{dom}(p)$.
The coordinate-wise action of $\bigoplus_n\Sigma_n$ on $\mathbb{P}_{\bar{A}}$ is ergodic.
\end{example}
\begin{proof}
Note that for any two $p,q\in\P$ with the same domain there is a $\gamma\in\bigoplus_n\Sigma_n$ such that $\gamma\cdot p=q$.
For arbitrary conditions $p,q$, find extensions $p',q'$ of $p,q$ respectively, and fix $\gamma$ sending $p'$ to $q'$. Then $q'$ is a common extension of $q$ and $\gamma\cdot p$.
\end{proof}
\begin{figure}[h]
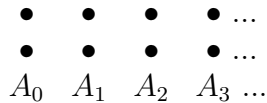

    \centering
\begin{tabular}{c c c c}
$\bullet$ & $\bullet$ & $\bullet$ & $\bullet$ ... \\

$\bullet$ & $\bullet$ & $\bullet$ & $\bullet$ ... \\
$A_0$ & $A_1$ & $A_2$ & $A_3$ ...
\end{tabular}
    \caption{Example~\ref{ex;finite-choice} for $\Sigma_n=\mathbb{Z}_2.$}
    \label{fig:pairs}
\end{figure}

The following is another interesting example of the generalized notion of ergodicity from Definition~\ref{defn: poset ergodicity}.
It will be used in Section~\ref{subsec;non-pi3} to find interesting actions of $\mathbb{Z}\times\Gamma^\omega$, for an arbitrary countable group $\Gamma$, and to conclude the results from Section~\ref{sec: intro CC jumps} about the Clemens-Coskey jumps.
\begin{example}\label{example: chinese remainder}
Let $\bar{A} = \seqq{A_p}{p \textrm{ prime}}$ and suppose $\mathbb{Z}_p\curvearrowright A_p$ acts transitively for every $p$.
We extend each action of $\mathbb{Z}_p$ to an action of $\mathbb{Z}$ via the surjective homomorphism $\mathbb{Z}\to\mathbb{Z}_p$, $k\mapsto k\mod p$.
Then the diagonal action $\mathbb{Z}\curvearrowright\mathbb{P}_{\bar{A}}$ is ergodic.
\end{example}
\begin{proof}
It suffices to show that for two conditions $r,q\in\mathbb{P}$ of the same length, there is a $k\in\mathbb{Z}$ such that $k\cdot r=q$. 
By assumption, for each $p$ in the domains of $r$ and $q$, there is some $\sigma_p\in\mathbb{Z}_p$ sending $r(p)$ to $q(p)$.
By the Chinese remainder theorem, there is some $k\in\mathbb{Z}$ such that for each $p$ in the domains of $r,q$, $k\mod p=\sigma_p$.
It follows that $k$, via the action $\mathbb{Z}\curvearrowright\mathbb{P}$, maps $r$ to $q$ .
\end{proof}
\begin{figure}[h]
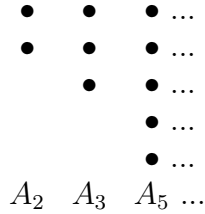

    \centering
\begin{tabular}{c c c}
$\bullet$ & $\bullet$ & $\bullet$ ... \\
$\bullet$ & $\bullet$ & $\bullet$ ... \\
 & $\bullet$ & $\bullet$ ...\\
 & & $\bullet$ ... \\
 & & $\bullet$ ... \\
$A_2$ & $A_3$ & $A_5$ ...
\end{tabular}
    \caption{Example~\ref{example: chinese remainder} with $A_p=\mathbb{Z}_p$.}
    \label{fig:primes}
\end{figure}

Below, we will use Lemma~\ref{lem;weak-hom} to prove an irreducibility to $=^+$, in a way similar to how it was used above to prove an irreducibility to $=_\mathbb{R}$.
Towards that end, we need to analyze definable sets of reals in $V(A)$, as we analyzed definable reals before. 

Since we in fact understand arbitrary definable subsets of the ground model over which we force (Lemma~\ref{lem;weak-hom}), we immediately get the desired understanding of definable sets of reals. This is as long as we force with a poset $\P$ satisfying the ergodicity assumption as above, and critically, does not add any reals. 
\begin{corollary}\label{cor;irred-to-jumps}
In Lemma~\ref{lem;weak-hom}, if $\mathbb{P}$ adds no new reals, then for any set of reals $B$, if $B$ is definable from $\Gamma\cdot x$ then $B\in V$.
\end{corollary}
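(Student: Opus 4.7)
The plan is essentially to observe that the hypothesis ``$\P$ adds no new reals'' lets us reduce the statement directly to Lemma~\ref{lem;weak-hom}. The issue with applying Lemma~\ref{lem;weak-hom} to an arbitrary set of reals $B\in V(\Gamma\cdot x)$ is that the lemma only concludes something about sets $B\subseteq V$. Normally a set of reals living in a generic extension need not be a subset of the ground model, because it may contain new reals. But the no-new-reals hypothesis kills exactly this obstruction.

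More concretely, I would proceed as follows. Let $B\in V(\Gamma\cdot x)\subseteq V[x]$ be a set of reals, and assume $B$ is definable from $\Gamma\cdot x$ and parameters in $V$. Since $\P$ adds no new reals, $\R^{V[x]}=\R^V$, so in particular every real in $B$ lies in $V$. Thus $B\subseteq\R^V\subseteq V$, so $B$ is a subset of the ground model. Now Lemma~\ref{lem;weak-hom} applies verbatim (with the same defining formula and parameters witnessing that $B$ is definable from $\Gamma\cdot x$) to give $B\in V$.

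There is essentially no obstacle here: the only thing to verify is that the defining data for $B$ as a set of reals matches the hypotheses of Lemma~\ref{lem;weak-hom}, which is immediate once we know $B\subseteq V$. If one wants to state the corollary more generally, the same argument shows that for any transitive set $W\in V$ with $W^{V[x]}=W^V$ (for example, $W=\R$, $W=$ hereditarily countable sets coded by reals in $V$, etc.), any subset of $W$ in $V(\Gamma\cdot x)$ which is definable from $\Gamma\cdot x$ over $V$ already lies in $V$. For the statement as written, the one-line reduction above suffices.
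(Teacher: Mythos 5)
Your argument is exactly the paper's: the no-new-reals hypothesis forces $B\subseteq\R^{V[x]}=\R^V\subseteq V$, after which Lemma~\ref{lem;weak-hom} applies directly. The proposal is correct and matches the paper's one-line proof, just spelled out in slightly more detail.
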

\begin{proof}
This follows immediately from Lemma~\ref{lem;weak-hom}, since by assumption B is a subset of the ground model $V$.
\end{proof}

\begin{example}\label{example: choice for pairs}
Let $\bar{A}=\seqq{A_n}{n<\omega}$ be a sequence of pairs, $|A_n|=2$, and $\mathbb{P}=\P_{\bar{A}}$ the poset of finite choice functions as in Example~\ref{ex;finite-choice} (see Figure~\ref{fig:pairs}). 
Then $\mathbb{Z}_2^{<\omega}$ acts ergodically on $\mathbb{P}$.
Let $x\in\prod_n A_n$ be $\P$-generic over $V$, and consider $A=\mathbb{Z}_2^{<\omega}\cdot x$, its orbit.

Assume further that this orbit $A$ is a complete invariant for some equivalence relation $E$ (much like in Claim~\ref{lem;weak-hom} above, where $A$ was an $E_0$-class).
To get a non reducibility of $E$ to $=^+$,
we want a situation where $V(A)\neq V(B)$ for any set of reals $B$, which is definable from $A$.
This will follow at once from Corollary~\ref{cor;irred-to-jumps} assuming that forcing with $\mathbb{P}$ over $V$ does not add reals.
For the latter to hold it must be the case that $\prod_{n<\omega} A_n=\emptyset$ in the model $V$. (Otherwise, if $z\in\prod_n A_n$ is in $V$, then the set $\set{n<\omega}{z_n=x_n}$ will be a new subset of $\omega$.)
\end{example}

Such a model, having a sequence of pairs $\seqq{A_n}{n<\omega}$ with no choice function, was constructed by Cohen~\cite{Cohen1963}, based on an earlier construction of Fraenkel~\cite{Fraenkel1922} using urelements (see also \cite{Kanamori2008}).
The members of the sets $A_n$ cannot be ordinals, nor reals. Indeed, in Cohen's model each of the two members of $A_n$ is a set of reals, such that the two sets are ``sufficiently indiscernible''.

In light of  Lemma~\ref{lem;gen-orbit-type}, getting ``sufficiently indiscernible'' sets of reals is simple: given a countable group $\Gamma$ acting ergodically on a Polish space $X$, for any two generic reals $x,y\in X$, the orbits $\Gamma\cdot x$ and $\Gamma\cdot y$ satisfy the same type over $V$.

\subsection{Non potentially $\mathbf{\Pi}^0_3$ actions}\label{section: motivation non Pi3}
We now describe an action of a product group such that the orbit equivalence relation admits a generic complete invariant $A$ as in Example~\ref{example: choice for pairs}.
This construction will be revisited in Section~\ref{sec;lower-bounds}, and similar ideas will be used to produce non-potentially $\BPi^0_4$ and $\BPi^0_5$ actions.
Following the discussion above, we consider ``an ergodic action of $\mathbb{Z}_2^{<\omega}$ on pairs of $\Gamma$-orbits'', which will correspond to an action of the product group $\mathbb{Z}_2^{<\omega}\times\Gamma^\omega$.
Using Solecki's characterization mentioned in Section~\ref{section: introduction}, this group is tame for various choices of countable abelian groups $\Gamma$.
The same argument also works with $\Z_p$ instead of $\Z_2$, for any prime $p$.

Let $\Gamma$ be an infinite countable group, and consider the standard shift action of $\Gamma$ on the space $2^\Gamma$.
This action extends to a diagonal action of $\Gamma$ on the product space $X_p=(2^\Gamma)^p$ by shifting all coordinates simultanously,
\[ \gamma \cdot (x_k)_{k<p} = (\gamma \cdot x_k)_{k < p}.\]
There is also a natural action of $\mathbb{Z}_p$ on $X_p$, by rotating,
\[ \sigma \cdot (x_k)_{k<p} = (x_{k + \sigma \pmod{p}})_{k<p}. \]
These actions commute, yielding an action of $\Z_p\times\Gamma$ on $X_p$.

We can identify the group $\Z_p^{<\omega}$ as the subgroup of $\Z_p^{\omega}$ consisting of the sequences which are eventually the identity, and consider the inherited coordinatewise action $\Z_p^{<\omega} \curvearrowright X_p^\omega$ which commutes with the coordinate-wise action $\Gamma^\omega \curvearrowright X_p^\omega$. Thus we can define the action

\begin{equation*}
\alpha_p\colon\Z_p^{<\omega}\times\Gamma^\omega\curvearrowright X_p^\omega.
\end{equation*}

Fix $\alpha=\alpha_p$,
and let $E_\alpha$ be the orbit equivalence relation induced by the action $\alpha$.

\begin{proposition}\label{prop:motivation non-Pi3}
The orbit equivalence relation $E_\alpha$ is not Borel reducible to $=^+$.
\end{proposition}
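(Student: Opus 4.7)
The plan is to invoke Corollary~\ref{cor : irred to cong}(1): if $E_\alpha\leq_B=^+$, then for a $\P$-generic $x\in X_p^\omega$ with $G$-orbit $A$ (where $G=\Z_p^{<\omega}\times\Gamma^\omega$) there must be a set of reals $B\in V(A)$, definable from $A$ over $V$, with $V(A)=V(B)$. I would take $x$ Cohen-generic and derive a contradiction by locating $A$ inside an intermediate symmetric submodel $M:=V(\bar D)$, where $\bar D=\seqq{D_n}{n<\omega}$ and $D_n$ is the set of $\Gamma$-orbits contained in the $(\Z_p\times\Gamma)$-orbit of $x_n$ in $X_p$; genericity of $x$ would ensure $|D_n|=p$ for all $n$. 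The intuition is that $A$ carries two pieces of information over $\bar D$, namely the $E_0$-class of threads $\{(\Gamma\cdot y_n)_n:y\in A\}\subseteq\prod_nD_n$, and that this structure is precisely what Examples~\ref{ex;finite-choice} and \ref{example: choice for pairs} are designed to analyze.

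The argument rests on two structural claims about $M$. First, $\prod_nD_n=\emptyset$ in $M$: this is the choice-failure property characteristic of Cohen--Fraenkel-style symmetric submodels, and should follow from an ergodicity argument, since the rotation symmetries in $\Z_p^{<\omega}\subseteq G$ fix $\bar D$ while acting nontrivially on each coordinate, so that Lemma~\ref{lem;gen-orbit-type}, applied to the natural ergodic action of $G$ on the Cohen poset of $X_p^\omega$, rules out any choice function for $\bar D$ that is definable from $\bar D$ over $V$. Second, over $M$ the orbit $A$ is (essentially) a $\Z_p^{<\omega}$-orbit of a $\mathbb{P}_{\bar D}$-generic, where $\mathbb{P}_{\bar D}$ is the poset of finite partial choice functions for $\bar D$: by Example~\ref{ex;finite-choice} the action $\Z_p^{<\omega}\curvearrowright\mathbb{P}_{\bar D}$ is ergodic, and as in Example~\ref{example: choice for pairs} this forcing adds no reals over $M$ because $\prod_nD_n=\emptyset$ there.

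Granting these, Corollary~\ref{cor;irred-to-jumps}, applied with $M$ in place of $V$, gives that every set of reals in $M(A)$ which is definable from $A$ over $M$ lies in $M$; in particular $B\in M$, since $B$ is definable from $A$ over $V\subseteq M$. Therefore $V(A)=V(B)\subseteq M$, forcing $A\in M$. But then, picking any $y\in A$ inside the ZF-model $M$ (which is legal as $A$ is nonempty), the tuple $(\Gamma\cdot y_n)_{n<\omega}$ is a genuine choice function in $\prod_nD_n$ living in $M$, contradicting the first structural claim. The main obstacle will be executing the symmetric-model analysis underlying those two claims: specifically, verifying that the $G$-symmetries on the Cohen poset over $V$ descend to enough symmetry of $M$ to force $\prod_nD_n=\emptyset$, while simultaneously degenerating over $M$ into the simpler $\Z_p^{<\omega}$-symmetry of $\mathbb{P}_{\bar D}$ needed to recognize $A$ as a $\mathbb{P}_{\bar D}$-generic orbit for the purposes of Corollary~\ref{cor;irred-to-jumps}.
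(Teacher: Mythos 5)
Your outline is, in all essentials, the paper's own proof. Your $\bar D$ is the paper's $\bar A=\seqq{\Z_p\cdot A_n}{n<\omega}$ with $A_n=\Gamma\cdot x_n$, your $M$ is $V(\bar A)$, your two structural claims are exactly Lemma~\ref{lemma: motivation non pot Pi3} (no choice function for $\bar A$ in $V(\bar A)$; the choice sequence $\seqq{A_n}{n<\omega}$ is $\P_{\bar A}$-generic and the forcing adds no new subsets of the ground model), and the endgame via ergodicity of $\Z_p^{<\omega}\curvearrowright\P_{\bar A}$ (Example~\ref{ex;finite-choice}), Corollary~\ref{cor;irred-to-jumps}, and Corollary~\ref{cor : irred to cong}(1) is identical.

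The one place where your justification points the wrong way is the claim that $\P_{\bar D}$ adds no reals over $M$ \emph{because} $\prod_n D_n=\emptyset$ in $M$. That implication is false in general: starting from a Fraenkel--Cohen sequence of pairs $\{a_n,b_n\}$ with no choice function and any $S\subseteq\omega$ in $M$, the relabelled pairs $D_n'=\{(a_n,i),(b_n,1-i)\}$ with $i=\chi_S(n)$ still have empty product, yet a generic choice function's second coordinates form a real from which (together with $S$) the forbidden choice function is recoverable, so the forcing adds a real. Emptiness of the product is only \emph{necessary} for not adding reals (that is the direction observed in Example~\ref{example: choice for pairs}); the paper in fact derives the emptiness \emph{from} the no-new-subsets property (Proposition~\ref{prop:symmetric-cohen-properties}(3) from (2)). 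The true source of both of your structural claims is that $\bar A$ is \emph{symmetric} over $V$ in the sense of Definition~\ref{defn: indiscernible sequences}: any two conditions in $\P_{\bar A}$ with the same domain $a$ have the same type over $V(\bar A\restriction(\omega\setminus a))$, which is verified via Propositions~\ref{prop:shift_action_ergodic} and~\ref{prop:technical_tool} using the ergodicity of the $\Gamma$-shift on the Cohen poset for each coordinate $X_p$ (not the $\Z_p$-rotations, which play the role of the commuting $\Sigma_n$-action). Relatedly, for your first claim it is not enough to rule out choice functions definable from $\bar D$ over $V$: by Fact~\ref{fact;definability-V(A)}, elements of $M=V(\bar D)$ may be defined using finitely many parameters from the transitive closure of $\bar D$, i.e., from finitely many of the orbits themselves, and this is precisely why the symmetry condition is relativized to $V(\bar A\restriction(\omega\setminus a))$. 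With Proposition~\ref{prop:technical_tool} and Proposition~\ref{prop:symmetric-cohen-properties} in place, your argument closes exactly as the paper's does.
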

It follows from Theorem~\ref{thm: HKL max} that $E_\alpha$ is not potentially $\mathbf{\Pi}^0_3$.
This already refutes Conjecture 8.3 of \cite{DingGao2017}.

The action of $\Z_p$ above can be viewed as an action of $\Z_p$ on $\Gamma$-orbits as follows.
Given $x\in X_p$ and $\sigma\in\Z_p$, let $\sigma\cdot(\Gamma\cdot x)$ be the orbit $\Gamma\cdot(\sigma\cdot x)$. Note that this is well-defined as the actions commute. 
Similarly, $\mathbb{Z}_p^{<\omega}$ acts on sequences of $\Gamma$-orbits by 
\[\sigma\cdot \seqq{\Gamma\cdot x_n}{n<\omega}=\seqq{\Gamma\cdot(\sigma_n\cdot x_n)}{n<\omega},\]
From the definition of the action we see that the map
\begin{equation*}
    x\mapsto\mathbb{Z}_p^{<\omega}\cdot\seqq{\Gamma\cdot x_n}{n<\omega}
\end{equation*} 
is an absolute classification of $E_\alpha$.
An invariant here can be thought of as a set of all finite changes of a sequence of $p$-tuples of $\Gamma$-orbits.

Let $\mathbb{Q}$ the forcing poset to add a Cohen-generic element of $X_p^\omega$, 
and force with $\Q$ over $V$ to get a generic sequence $\seqq{x_n}{n < \omega}$. Define $A_n = \Gamma \cdot x_n$ for $n < \omega$, and $\bar{A} = \seqq{\Z_p \cdot A_n}{n < \omega}$. Then $\hat{A} = \Z_{p}^{<\omega} \cdot \seqq{A_n}{n<\omega}$ is an absolute invariant for $E_{\alpha}$.
Note that $V(\seqq{A_n}{n<\omega}) = V(\hat{A})$, because $\seqq{A_n}{n<\omega}$ is in $\hat{A}$, and on the other hand, $\hat{A}$ is just the orbit of $\seqq{A_n}{n<\omega}$ under the action of $\mathbb{Z}_p^{<\omega}$.
\begin{lemma}\label{lemma: motivation non pot Pi3}
Let $\P=\P_{\bar{A}}$ be the poset of all finite choice sequences in $\bar{A}$.
\begin{enumerate}
    \item $\P$ adds no reals when forcing over $V(\bar{A})$;
    \item $x=\seqq{A_n}{n<\omega}$ is $\P$-generic over $V(\bar{A})$, and is not in $V(\bar{A})$.
\end{enumerate}
\end{lemma}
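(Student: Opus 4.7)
The overall strategy is to exploit the action of $\mathbb{Z}_p^{<\omega}$ by automorphisms on $V[\seqq{x_n}{n<\omega}]$: each $\sigma$ fixes $V$ pointwise, fixes $\bar{A}$ (as each entry $\mathbb{Z}_p \cdot A_n$ is preserved setwise), and sends $A_n$ to $\sigma_n \cdot A_n$. The key point is that if $\sigma$ is supported on $[N,\infty)$, then it fixes pointwise every element of $\mathrm{tc}(\bar{A})$, and every finite object built from these, whose definition only invokes coordinates below $N$. The plan is to prove (2) first and then deduce (1) via Lemma~\ref{lem;weak-hom}.

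For the genericity half of (2), suppose $D \in V(\bar{A})$ is dense in $\P$. By Fact~\ref{fact;definability-V(A)}, $D = \{q \in \P : \psi(q, \bar{A}, \bar{a}, v)\}$ with $\bar{a} \in \mathrm{tc}(\bar{A})$ and $v \in V$, and $\bar{a}$ touches only coordinates below some $N$. By density, extend $x|_N = \seq{A_0, \ldots, A_{N-1}}$ to some $p \in D$ with $\mathrm{dom}(p) = M$. Using transitivity of the $\mathbb{Z}_p$-action on each $\mathbb{Z}_p \cdot A_n$, pick $\sigma \in \mathbb{Z}_p^{<\omega}$ supported on $[N, M)$ with $\sigma_n \cdot p(n) = A_n$ for each $n \in [N, M)$; then $\sigma \cdot p = x|_M$, and since $\sigma$ fixes $\bar{A}, \bar{a}, v$, it also fixes $D$, so $x|_M = \sigma \cdot p \in D$. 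For $x \notin V(\bar{A})$, observe that $\P$ is nontrivial in $V(\bar{A})$ (e.g.\ $\seq{A_0}$ and $\seq{\sigma \cdot A_0}$ for nontrivial $\sigma \in \mathbb{Z}_p$ are both in $V(\bar{A})$ and incompatible), so its generic cannot lie in the ground model.

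For (1), Example~\ref{ex;finite-choice} provides the ergodicity of $\mathbb{Z}_p^{<\omega} \curvearrowright \P$, so Lemma~\ref{lem;weak-hom} will finish the proof provided that every real $r \in V(\bar{A})[x]$ is definable from the orbit $\hat{A} = \mathbb{Z}_p^{<\omega} \cdot x$ over $V(\bar{A})$. Write $r = \{n \in \omega : \phi(n, x, \bar{c}, v)\}$ with $\bar{c} \in V(\bar{A})$ a finite tuple touching only coordinates below some $N$ and $v \in V$. For any $\sigma \in \mathbb{Z}_p^{<\omega}$ supported on $[N, \infty)$, the automorphism $\sigma$ fixes $\bar{c}, v$ pointwise and fixes $r$ setwise (since $r \subseteq \omega$), so $r = \{n : \phi(n, \sigma \cdot x, \bar{c}, v)\}$; hence $r = \{n : \phi(n, y, \bar{c}, v)\}$ for every $y$ in the set $S = \{y \in \hat{A} : y|_{<N} = x|_{<N}\}$. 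Since $x|_{<N}$ is a finite sequence whose graph points $(n, A_n)$ all lie in $V(\bar{A})$ (using $A_n \in \mathrm{tc}(\bar{A}) \subseteq V(\bar{A})$), one concludes $x|_{<N} \in V(\bar{A})$, and therefore $S$, and consequently $r$, is definable from $\hat{A}$ over $V(\bar{A})$.

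The main obstacle is the parameter bookkeeping in (1): one must reconcile $x \notin V(\bar{A})$ (which is essential for the subsequent irreducibility argument) with the observation that every finite initial segment $x|_{<N}$ does lie in $V(\bar{A})$. This asymmetry between finite and infinite information is exactly what lets the symmetry argument on the tail of $x$ eliminate $x$ in favor of the orbit $\hat{A}$, while the head $x|_{<N}$ is harmlessly absorbed into the $V(\bar{A})$-parameters.
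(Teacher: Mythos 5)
There is a genuine gap at the heart of both parts: the inference from ``there is an automorphism of the forcing that fixes $V$, $\bar A$, and the low-coordinate parameters while sending $A_n$ to $\sigma_n\cdot A_n$'' to ``$A_n$ and $\sigma_n\cdot A_n$ (hence $p$ and $\sigma\cdot p$, hence $x$ and $\sigma\cdot x$) satisfy the same formulas over those parameters.'' A transitive model has no nontrivial $\in$-automorphisms, so what you actually have is an automorphism $\pi_\sigma$ of the Cohen poset; the symmetry lemma for forcing then only yields $\varphi(\tau[G])\leftrightarrow\varphi((\pi_\sigma\tau)[\pi_\sigma G])$, which is vacuous because $(\pi_\sigma\tau)[\pi_\sigma G]=\tau[G]$. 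To compare $\varphi(\dot x[G])$ with $\varphi(\dot x[\pi_\sigma G])$ inside the single model $V[G]=V[\pi_\sigma G]$ you must additionally know that the relevant statements are decided by conditions lying in both $G$ and $\pi_\sigma G$ --- and this is exactly where the ergodicity of the $\Gamma$-shift action enters (Proposition~\ref{prop:shift_action_ergodic} fed through Lemma~\ref{lem;gen-orbit-type}), which your proof never invokes. That this ingredient is not optional is seen by deleting the $\Gamma$-quotient: take $A_n=x_n$ instead of $\Gamma\cdot x_n$. The $\Z_p$-rotation automorphisms still exist, still fix $\seqq{\Z_p\cdot x_n}{n<\omega}$ and all your parameters, and still carry $x_n$ to $\sigma_n\cdot x_n$; yet the lemma is now false, because a $V$-definable selector (e.g.\ the rotation of $x_n$ whose first coordinate is lexicographically least, using a well-order of $\Gamma$ from $V$) recovers a choice function from $\bar A$, so the analogue of $x$ lies in $V(\bar A)$ and $\P$ does add reals.

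The paper closes this gap via Proposition~\ref{prop:technical_tool}: two conditions $p,p'\in\P_{\bar A}$ with the same domain $a$ are the $\prod_{n\in a}\Gamma_n$-orbits of the generics $\seqq{\sigma_n\cdot x_n}{n\in a}$ and $\seqq{\sigma_n'\cdot x_n}{n\in a}$ over $V(\bar{A}\restriction(\omega\setminus a))$, and since that orbit action is ergodic, Lemma~\ref{lem;gen-orbit-type} shows their types are decided in the ground model and hence coincide; this is the ``symmetric sequence'' property, from which Proposition~\ref{prop:symmetric-cohen-properties} gives both clauses of the lemma. Once the indiscernibility is actually established, the rest of your argument is sound (your detour through Lemma~\ref{lem;weak-hom} for part (1) works, though the paper argues directly with the forcing relation). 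Two smaller points: your justification that $x\notin V(\bar A)$ needs that \emph{every} condition has incompatible extensions --- a single incompatible pair does not prevent a generic filter from lying in the ground model --- and the paper instead deduces non-existence of choice functions in $V(\bar A)$ from clause (1).
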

Note that each set $A_n$ has precisely $p$ distict $\Gamma$-orbits. By Example~\ref{ex;finite-choice}, the action of $\Z_p^{<\omega}$ on $\P$ is ergodic.
Following Example~\ref{example: choice for pairs}, we conclude that if $B$ is a set of reals in $V(\hat{A})$ which is definable from $\hat{A}$ over $V$, then $B$ is in $V(\bar{B})$. Since $V(\hat{B})$ is strictly larger than $V(\bar{B})$ (as the choice sequences $\seqq{A_n}{n<\omega}$ is not in $V(\bar{A})$), it follows that $V(\hat{A})$ is not of the form $V(B)$ for any set of reals $B$ which is definable from $\hat{A}$ over $V$.
By Theorem~\ref{theorem:invariants_borel_reduction}, it follows that $E_\alpha$ is not Borel reducible to $=^+$, concluding the proof of Proposition~\ref{prop:motivation non-Pi3}.

The key point behind Lemma~\ref{lemma: motivation non pot Pi3} is that the members of each $A_n$ are indiscenrible over $V$ (recall Lemma~\ref{lem;gen-orbit-type}), and therefore any two conditions in $\P$ have the same opinion about statements of the form $\check{v}\in\tau$, where $\tau$ is a name for a subset of $V$, and so any such name is decided in $V(\bar{A})$.

We prove this lemma in a more general setting in the next section (see Proposition~\ref{prop:symmetric-cohen-properties}), which will be used iteratively in the construction of more complex actions.

\section{Technical tools}
Section~\ref{section: tools actions on orbits} provides a construction of actions of abelian products groups, similar to the action of $\Z_{p}^{<\omega}\times \Gamma^\omega$ described in Section~\ref{section: motivation non Pi3} above. This construction will be iterated to find actions of higher complexity of more involved abelian product groups.

Section~\ref{section: tools indiscernibility} provides the main technical results, showing that the indiscernibility of the members of our invariants, as motivated in Section~\ref{sec: motivation}, provides the necessary analysis of the models generated by those invariants, as in Lemma~\ref{lemma: motivation non pot Pi3}. 

Section~\ref{section:tools higher rank sets} shows how reals, sets of reals, sets of sets of reals, and so on (invariants for $=_\R$, $=^+$, $=^{++}$, and so on), are stablized when iterating the constructions outlined in Section~\ref{section: tools indiscernibility}. This will be useful to prove irreducibility to the higher Friedman-Stanley jumps.

\subsection{Actions on orbits}\label{section: tools actions on orbits}
Suppose $\alpha\colon G\curvearrowright X$ and $\beta\colon\Sigma\curvearrowright X$ are commuting actions. Assume also $c\colon X\to I$ is an absolute complete classification of $\alpha$. (For example, if $G=\Gamma$ is a countable group, we may take $c(x)=\Gamma\cdot x$.)
We extend the action of $\Sigma$ to the invariants of $\alpha$ by $\sigma\cdot c(x)=c(\sigma\cdot x)$.

\begin{definition}\label{def;direct-sum-action}
Suppose $\alpha_n,\beta_n$ are commuting actions of $G_n,\Sigma_n$ on spaces $X_n$ respectively. 
Consider the natural product action $\beta_n\times\alpha_n$ of $\Sigma_n\times G_n$ on $X_n$, and the corresponding product action $\prod_n (\beta_n\times\alpha_n)$ of $\prod_n(\Sigma_n\times G_n)$ on $\prod_n X_n$.
We define the action $\bigoplus_n\beta_n\times\prod_n \alpha_n$ of $\bigoplus_n\Sigma_n\times\prod_n G_n$ on $\prod_n X_n$ as the restriction of the above action, where $\bigoplus_n\Sigma_n \times\prod_n G_n$ is identified as a subgroup of  $\prod_n(\Sigma_n\times G_n)$.
\end{definition}
Given absolute complete classifications $c_n\colon X_n\to I_n$ of the actions $G_n\curvearrowright X_n$, the map
\begin{equation*}
    x\mapsto \seqq{\Sigma_n\cdot c_n(x)}{n<\omega}
\end{equation*}
is a complete classification of the action $\prod_n \beta_n\times\prod_n\alpha_n$. Here the invariants are countable sequences whose $n$'th coordinate is the $\Sigma_n$-orbit of an invariant for the action of $G_n$.

Similarly, we get an absolute complete classification of the action $\bigoplus_n\beta_n\times\prod_n \alpha_n$:
\begin{equation*}
    x\mapsto(\bigoplus_n\Sigma_n)\cdot\seqq{c_n(x)}{n<\omega}. 
\end{equation*}
The action $\bigoplus_n\beta_n\times\prod_n \alpha_n$ will be the one increasing the potential complexity. Such action was used in Section~\ref{sec: motivation} with $G_n=\Gamma$ and $\Sigma_n=\Z_p$ for a fixed prime $p$.
Often the group $\Sigma_n$ will be finite, in which case $\Sigma_n\cdot c_n(x)=\set{c_n(\sigma(x))}{\sigma\in \Sigma_n}$ is a finite set of invariants in $I_n$.
The sequence $\seqq{c_n(x)}{n<\omega}$ can be viewed as a choice sequence through $\seqq{\Sigma_n\cdot c_n(x)}{n<\omega}$, and $(\bigoplus_n\Sigma_n)\cdot\seqq{c_n(x)}{n<\omega}$ is the set of all finite changes of this choice sequence.

\subsection{Symmetric sequences}\label{section: tools indiscernibility}
Let $\bar{A}=\seqq{A_n}{n<\omega}$ be a countable sequence of sets, in some generic extension of $V$.
For example, we have constructed an interesting sequence $\bar{A}$ in Section~\ref{section: motivation non Pi3}, where we started with a model $V$ of ZFC. 
\begin{remark}
Our base model $V$ is only assumed to be a model of ZF, and \textit{not} choice. This will be important as we iterate the construction.
\end{remark}
In $V(\bar{A})$, let $\P_{\bar{A}}$ be the poset of finite partial choice functions in $\bar{A}$. That is, the poset consisting of finite-support partial functions $p$ with $p(n) \in A_n$ for $n \in \textrm{dom}(p)$, ordered by reverse containment. 
Given a condition $q \in \mathbb{P}_{\bar{A}}$, we say that a finite subset $a \subseteq \omega$ is the support of $q$ if it is the domain of $q$ as a partial function. Note that $\P_{\bar{A}}$ is always bi-definable with $\bar{A}$ over $V$.

Given a subset $a \subseteq \omega$, we write $\bar{A} \upharpoonright a$ for the subsequence $\seqq{A_n}{n\in a}$. Note that for any set $x \in V(\bar{A})$, $x$ is definable from $\bar{A}$ over $V(\bar{A} \upharpoonright a)$ for some finite $a$.

\begin{definition}\label{defn: indiscernible sequences}
We call a sequence $\bar{A}$, appearing in some generic extension of $V$, \textbf{symmetric over $V$} if any two $q_1, q_2 \in \mathbb{P}_{\bar{A}}$ with the same domain $a \subseteq \omega$ have the same type over $V(\bar{A} \upharpoonright (\omega \setminus a))$, $\bar{A}$.
\end{definition}
For example, the sequence from Section~\ref{section: motivation non Pi3} was constructed such that the members of each $A_n$ are indiscernible over $V$. It will follow from Proposition~\ref{prop:technical_tool} below that this sequence is symmetric over $V$ as in Definition~\ref{defn: indiscernible sequences}. First, we show that this definition precisely ensures that the poset $\P_{\bar{A}}$ satisfies the conditions we wanted in Lemma~\ref{lemma: motivation non pot Pi3}.

\begin{proposition}\label{prop:symmetric-cohen-properties}
Suppose $\bar{A}$ is symmetric over $V$. Then the following hold:
\begin{enumerate}
    \item Any choice function for $\bar{A}$ is $\P_{\bar{A}}$-generic over $V(\bar{A})$;
    \item Forcing by $\mathbb{P}_{\bar{A}}$ over $V(\bar{A})$ does not add any new subsets of $V$. In other words, for any choice function $x$, we have $\mathcal{P}(V) \cap V(\bar{A}) = \mathcal{P}(V) \cap V(\bar{A})[x]$; and
    \item If $|A_n| \geq 2$ for infinitely-many $n$, then $V(\bar{A})$ does not have any choice function for $\bar{A}$.
\end{enumerate}
\end{proposition}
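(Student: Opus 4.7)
The plan is to reduce all three parts to a \emph{base case} in which the relevant objects are defined using parameters only from the ground model. The main structural fact I will use repeatedly is that if $\bar{A}$ is symmetric over $V$, then for any finite $b\subset\omega$ the subsequence $\bar{A}\upharpoonright(\omega\setminus b)$ is symmetric over the enlarged ground $V(\bar{A}\upharpoonright b)$; this is immediate from unwinding the definition, using $\omega\setminus a = b\sqcup((\omega\setminus b)\setminus a)$ whenever $a\cap b=\emptyset$. I will also use that iterated finite choice (available in ZF) puts every finite partial choice function for $\bar{A}$ inside $V(\bar{A})$, so in particular $x\upharpoonright b\in V(\bar{A})$ for any choice function $x$ in an outer model.

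The core observation is the following \emph{base case claim}: if $\bar{B}$ is symmetric over $W$ and $E\in W(\bar{B})$ is a subset of $\P_{\bar{B}}$ definable from $\bar{B}$ using only parameters from $W$, then whether $q\in E$ depends only on $\mathrm{supp}(q)$, since the parameters of $E$ lie in $W\subseteq W(\bar{B}\upharpoonright(\omega\setminus a))$ for every support $a$. For part (1), given a choice function $x$ and a dense open $D\in V(\bar{A})$ defined using parameters from $V(\bar{A}\upharpoonright b)$, I fix $e=x\upharpoonright b\in V(\bar{A})$ and pass to $D_e=\{r\in\P_{\bar{A}\upharpoonright(\omega\setminus b)}:e\cup r\in D\}$, a dense subset of $\P_{\bar{A}\upharpoonright(\omega\setminus b)}$ definable over the new ground $V(\bar{A}\upharpoonright b)$ with no extra parameters. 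Density of $D_e$ supplies some $q_0\in D_e$ with support $a\subseteq\omega\setminus b$, and the base case claim promotes this to: every condition with support $a$ lies in $D_e$. In particular $x\upharpoonright(b\cup a)=e\cup(x\upharpoonright a)\in D$.

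For part (2), I factor $\P_{\bar{A}}\cong\P_{\bar{A}\upharpoonright b}\times\P_{\bar{A}\upharpoonright(\omega\setminus b)}$, where $b$ captures the parameters of a given name $\tau$ for a subset of $V$. The first factor is trivial forcing over $V(\bar{A})$ since its atoms (total choice functions on $b$) all lie in $V(\bar{A})$, so it suffices to show that forcing with $\P_{\bar{A}\upharpoonright(\omega\setminus b)}$ over $V(\bar{A})$ adds no new subsets of $V$. Now $\tau$'s parameters sit in the ground $V(\bar{A}\upharpoonright b)$ of the symmetric sequence $\bar{A}\upharpoonright(\omega\setminus b)$, so the base case claim applies: for each $w\in V$, whether $p\forces w\in\tau$ depends only on $\mathrm{supp}(p)$. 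A short extension-to-common-support argument then rules out the coexistence of conditions forcing opposite values of ``$w\in\tau$'', so $\emptyset$ decides each such statement and $\tau^G=\{w\in V:\emptyset\forces w\in\tau\}\in V(\bar{A})$ independently of the generic $G$.

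Part (3) is a quick consequence of (1): if $x\in V(\bar{A})$ were a choice function, then $G_x=\{p\in\P_{\bar{A}}:p\subseteq x\}\in V(\bar{A})$ would be a $\P_{\bar{A}}$-generic filter over $V(\bar{A})$, impossible because $\P_{\bar{A}}$ is atomless when $|A_n|\geq 2$ for infinitely many $n$ (any condition extends in two incompatible ways by choosing two distinct elements of some $A_n$ with $n$ outside its support). The main obstacle throughout is the bookkeeping with parameters from $V(\bar{A}\upharpoonright b)$: symmetry of $\bar{A}$ over $V$ only dichotomizes support-classes for supports disjoint from $b$, so the maneuver of working with $\bar{A}\upharpoonright(\omega\setminus b)$ over the enlarged ground $V(\bar{A}\upharpoonright b)$ is essential to reach the genuinely parameter-free setting where the base case applies cleanly.
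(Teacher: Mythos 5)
Your proposal is correct and follows essentially the same route as the paper: the heart of both arguments is to absorb the finitely many coordinates carrying the parameters into the ground model and then use symmetry to see that membership in the relevant definable sets (the dense set in (1), the set of conditions forcing $z\in\dot s$ in (2)) depends only on the support of a condition. The only cosmetic differences are that you package this as an explicit relativization lemma plus a ``base case claim'' where the paper splits conditions as $p^\frown r_i$ inline, and that you derive (3) from (1) via atomlessness of $\P_{\bar{A}}$ where the paper derives it from (2) by exhibiting a new subset of $\omega$.
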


\begin{proof}
(1) Fix a choice function $c$ for $\bar{A}$, $c=\seqq{c(n)}{n\in\omega}$, $c(n)\in A_n$. We show that the filter of all $p\in\P_{\bar{B}}$ such that $p=c\restriction\mathrm{dom}(p)$, is generic over $V(\bar{A})$.
Let $D\in V(\bar{A})$ be a dense open subset of $\P_{\bar{A}}$, and fix a finite $a\subset\omega$ such that $D$ is definable from $\bar{A}$ over $V(\bar{A}\restriction a)$.
Since $D$ is dense, there is some $q\in D$ extending $c\restriction a$. Now $c\restriction \mathrm{dom}(q)$ and $q$ have the same type over $V(\bar{A}\restriction a)$, and $D$ is definable using parameters from $V(\bar{A}\restriction a)$, therefore $c\restriction \mathrm{dom}(q)$ is in $D$ as well.

(2) Suppose that $\dot{s}$ is a $\P_{\bar{A}}$-name in $V(\bar{A})$ for a subset of $V$. We can fix some finite $a \subseteq \omega$ and $v \in V$ and formula $\varphi$ such that $\dot{s}$ is definable in $V(\bar{A})$ from $\bar{A}$, $\bar{A}\restriction a$, and $v$ using $\varphi$. 
Let $p$ be the restriction of $x$ to the domain $a$.
We claim that for any two conditions $q_1,q_2\in\P$ extending $p$ and any $z\in V$,
\begin{equation*}
    q_1\Vdash z\in\dot{s}\iff q_2\Vdash z\in\dot{s}.
\end{equation*}
We may assume that $q_1,q_2$ have the same domain $a\cup b$, where $b\cap a=\emptyset$. Let $q_i=p^\frown r_i$ where $r_i\in\P$ is a condition with domain $b$.
The statement $q_i\Vdash z\in\dot{s}$ can be written as a statement about $r_i$ using $\bar{A}$, $v,z$, and parameters in $V(\bar{A}\restriction a)$.
By the indiscernibility assumption in Definition~\ref{defn: indiscernible sequences} we conclude that $r_1$ satisfies this if and only if $r_2$ does.

Finally, $\dot{s}[x]$ can be defined in $V(\bar{A})$ as the set of all $z\in V$ such that there is some condition $q$ extending $p$ (equivalently, any condition $q$ extending $p$) which forces $z\in\dot{s}$.

(3) Suppose for contradiction that $V(\bar{A})$ does have a choice function $c$ for $\bar{A}$. 
Then if $d$ is a $\P_{\bar{A}}$-generic choice function over $V(\bar{A})$, the set $\set{n}{c(n)=d(n)}$ is new to $V(\bar{A})$, contradicting clause (2).
\end{proof}

\begin{proposition}\label{prop:shift_action_ergodic}
Let $\Gamma \le \Delta$ be countable groups, and let $\P$ be the Cohen poset for $2^{\Delta}$, that is, the poset of finite-support functions from $\Delta$ to $2$ ordered by reverse inclusion. Let $a : \Gamma \curvearrowright \P$ be the natural shift action of $\Delta$ on $\P$ restricted to $\Gamma$. If $\Gamma$ is infinite, then the action is ergodic.
\end{proposition}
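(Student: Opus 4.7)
The plan is to show the much stronger statement that given any two conditions $p,q\in\P$, there is $\gamma\in\Gamma$ such that the \emph{domains} of $\gamma\cdot p$ and $q$ are disjoint. Once the domains are disjoint, the union $(\gamma\cdot p)\cup q$ is trivially a function, so $\gamma\cdot p$ and $q$ are compatible, witnessing ergodicity in the sense of Definition~\ref{defn: poset ergodicity}.

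First I would unwind the action: identify $\P$ with the finite-support partial functions $\Delta\to 2$, and note that the restriction to $\Gamma$ of the shift action of $\Delta$ on $\P$ sends a condition $p$ with domain $D_p\subseteq\Delta$ to the condition $\gamma\cdot p$ with domain $\gamma\cdot D_p$, defined by $(\gamma\cdot p)(\delta)=p(\gamma^{-1}\delta)$. Thus for $p,q\in\P$ with finite domains $D_p,D_q\subseteq\Delta$, the condition $\gamma\cdot p$ is incompatible with $q$ only if $(\gamma\cdot D_p)\cap D_q\neq\emptyset$, which means that $\gamma\delta_1=\delta_2$ for some $\delta_1\in D_p$ and $\delta_2\in D_q$, i.e.\ $\gamma$ lies in the finite set
\[
F=\set{\delta_2\delta_1^{-1}}{\delta_1\in D_p,\ \delta_2\in D_q}\subseteq\Delta.
\]

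Since $\Gamma$ is infinite and $F$ is finite, the set $\Gamma\setminus F$ is nonempty; any $\gamma$ chosen from it ensures $(\gamma\cdot D_p)\cap D_q=\emptyset$, and hence $\gamma\cdot p$ is compatible with $q$. This is essentially the whole argument; there is no real obstacle, as the hypothesis ``$\Gamma$ infinite'' is exactly what is needed to beat the finite set of forbidden shifts determined by the two supports. The same argument also shows that the action is in fact highly transitive on supports of a fixed finite size, which in retrospect explains why ergodicity holds without any further assumption on how $\Gamma$ sits inside $\Delta$.
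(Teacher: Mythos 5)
Your proof is correct and follows essentially the same route as the paper's: reduce compatibility to disjointness of the finite supports, and use that an infinite $\Gamma$ cannot be covered by the finite set of translations $\delta_2\delta_1^{-1}$ determined by the two supports. You simply spell out in detail the step the paper leaves as an observation.
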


\begin{proof}
Observe that any condition $p \in \P$ is a partial function $p : \Delta \rightarrow 2$ with finite support, and any two conditions whose supports do not intersect are compatible. Thus it is enough to observe that for any finite subsets $a, b \subseteq \Delta$, there is some $\gamma \in \Gamma$ such that $\gamma a \cap b = \emptyset.$
\end{proof}

The following is easy to verify:

\begin{proposition}\label{prop:step_up}
Suppose for every $n$, $a_n : \Gamma_n \curvearrowright \P_n$ is an ergodic action of a countable group on a poset. Then the natural action
\[ \bigoplus_n \Gamma_n \curvearrowright \prod_n \P_n \]
is ergodic, where $\prod_n \P_n$ is the finite support product of the $\P_n$.
\end{proposition}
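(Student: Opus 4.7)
The plan is to unpack both definitions and observe that ergodicity on the factors transfers to the finite-support product in the most direct way possible. Recall that ergodicity of an action $a : \Gamma \curvearrowright \P$ means that for any two conditions $p, q \in \P$ there is some $\gamma \in \Gamma$ with $\gamma \cdot p$ compatible with $q$. In the finite-support product $\prod_n \P_n$, a condition is a sequence $(p_n)_{n<\omega}$ with $p_n$ equal to the maximum of $\P_n$ for all but finitely many $n$, and two such sequences are compatible precisely when they are coordinatewise compatible. The group $\bigoplus_n \Gamma_n$ acts on $\prod_n \P_n$ coordinatewise, with the constraint that only finitely many coordinates of the acting element are nontrivial.

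First, I would fix two arbitrary conditions $p = (p_n)_{n<\omega}$ and $q = (q_n)_{n<\omega}$ in $\prod_n \P_n$ and let $F \subset \omega$ be a finite set containing the supports of both $p$ and $q$. For $n \notin F$, both $p_n$ and $q_n$ are the trivial condition, hence compatible; no group action is needed on these coordinates.

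Second, for each $n \in F$, apply the ergodicity hypothesis on $a_n : \Gamma_n \curvearrowright \P_n$ to produce some $\gamma_n \in \Gamma_n$ such that $\gamma_n \cdot p_n$ is compatible with $q_n$ in $\P_n$. For $n \notin F$, set $\gamma_n$ to be the identity of $\Gamma_n$. Then $(\gamma_n)_{n<\omega}$ has finite support and therefore lies in $\bigoplus_n \Gamma_n$. Applying this element to $p$ coordinatewise produces a condition which is compatible with $q$ coordinate-by-coordinate, and hence compatible with $q$ in $\prod_n \P_n$, establishing ergodicity of the product action.

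There is really no serious obstacle here; the statement is essentially a direct unpacking of the definitions, and the only thing one must be mildly careful about is that the element realizing compatibility lies in the \emph{direct sum} rather than the full product, which is exactly why we only need to modify $p$ on the finitely many coordinates in the joint support.
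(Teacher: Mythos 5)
Your proof is correct and is exactly the intended argument; the paper itself omits the proof (labeling the proposition ``easy to verify''), and your coordinatewise application of ergodicity on the finitely many coordinates in the joint support, padded by identities elsewhere to land in the direct sum, is the standard verification. The only cosmetic difference is that the paper defines conditions in the finite-support product as finite partial functions rather than full sequences that are trivial off a finite set, which if anything makes the check slightly shorter, since coordinates outside a condition's domain impose no compatibility requirement at all.
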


Given posets $\mathbb{P}_n$, $n<\omega$, their \textbf{finite support product} is the poset $\mathbb{P}$ of all finite sequences $p$ with $p(i)\in \mathbb{P}_i$ for $i$ in the domain of $p$.
For $p,q\in\mathbb{P}$, $p$ extends $q$ in $\mathbb{P}$ if the domain of $p$ extends the domain of $q$, and $p(i)$ extends $q(i)$ in $\mathbb{P}_i$, for each $i$ in the domain of $q$.

A generic filter $G$ in $\mathbb{P}$ gives rise to a sequence $\seqq{G_n}{n<\omega}$ of filters such that $G_n$ is a generic filter in $\mathbb{P}_n$ (the projections of $G$ to the n'th coordinate). See \cite{Jech2003} or \cite{Kunen2011} for more details.
Below we call the generic filters $x_n$, to emphasize the usual identification of a generic filter with a particular generic object in the generic extension, as is done in the applications of the following proposition.

\begin{proposition}\label{prop:technical_tool}
Suppose that there is a sequence $\P_n$ of posets and commuting actions $\Gamma_n \curvearrowright \P_n$ and $\Sigma_n \curvearrowright \P_n$ in $V$ such that the actions $\Gamma_n \curvearrowright \P_n$ are ergodic. Fix a sequence $\seqq{x_n}{n<\omega}$ which is $\prod_{n< \omega} \P_n$-generic over $V$, define $A_n = \Gamma_n \cdot x_n$, and $\bar{A} := \seqq{\Sigma_n \cdot A_n}{n<\omega }$. Then $\bar{A}$ is symmetric over $V$.
\end{proposition}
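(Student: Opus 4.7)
The plan is to construct a $V$-automorphism $\pi$ of $\prod_n \P_n$ that sends a canonical name of $q_1$ to one of $q_2$ while fixing $\dot{\bar{A}}$ and any parameter-name from $V(\bar{A}\upharpoonright(\omega\setminus a))$, and then use the ergodicity of $\bigoplus_n \Gamma_n \curvearrowright \prod_n \P_n$ to promote the resulting syntactic equivalence to an equivalence of truth values in $V(\bar{A})$.

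To build $\pi$: for each $n \in a$ use $q_i(n) \in \Sigma_n \cdot A_n$ to pick $\sigma_{i,n} \in \Sigma_n$ with $q_i(n) = \sigma_{i,n} \cdot A_n$, set $\pi_n = \sigma_{2,n}\sigma_{1,n}^{-1} \in \Sigma_n$ for $n \in a$ and $\pi_n = \mathrm{id}$ for $n \notin a$, and let $\pi = \prod_n \pi_n$. Three invariance properties of $\pi$ must be checked. First, since each $\pi_n \in \Sigma_n$, it permutes the name-set $\Sigma_n \cdot \dot{A}_n$ within itself, so $\pi$ fixes $\dot{\bar{A}}$ entry-wise. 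Second, $\pi_n = \mathrm{id}$ for $n \notin a$, so $\pi$ fixes $\dot{\bar{A}}\upharpoonright(\omega \setminus a)$ pointwise; by Fact~\ref{fact;definability-V(A)}, for each $v \in V(\bar{A}\upharpoonright(\omega \setminus a))$ one may pick a name $\dot{v}$ built from $\dot{\bar{A}}\upharpoonright(\omega \setminus a)$ and parameters in $V$, which is therefore supported on coordinates outside $a$ and fixed by $\pi$. Third, using that $\Gamma_n$ and $\Sigma_n$ commute and that $\pi_n \sigma_{1,n} = \sigma_{2,n}$, one verifies $\pi(\sigma_{1,n}\cdot\dot{A}_n) = \sigma_{2,n}\cdot\dot{A}_n$, so $\pi$ carries the canonical name $\seqq{\sigma_{1,n}\cdot\dot{A}_n}{n \in a}$ for $q_1$ to that for $q_2$.

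Fix such a $\pi$, and for any formula $\varphi$ and parameter $v \in V(\bar{A}\upharpoonright(\omega \setminus a))$ let $\psi_i$ denote the relativized statement ``$V(\dot{\bar{A}}) \models \varphi(\dot{q}_i, \dot{\bar{A}}, \dot{v})$'' and $F_i := \{p \in \prod_n \P_n : p \Vdash \psi_i\}$. Each $F_i$ is open (downward closed); by the forcing-automorphism lemma and the three invariances above, $\pi F_1 = F_2$. Moreover each $F_i$ is $\bigoplus_n\Gamma_n$-invariant: for $\gamma \in \bigoplus_n\Gamma_n$, each $\gamma_n \in \Gamma_n$ fixes $\dot{A}_n$ trivially, so $\gamma$ fixes $\dot{\bar{A}}, \dot{v}, \dot{q}_i$ and hence $\psi_i$, giving $\gamma F_i = F_i$. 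By Proposition~\ref{prop:step_up}, $\bigoplus_n \Gamma_n \curvearrowright \prod_n \P_n$ is ergodic, and a standard zero-one argument (a $\Gamma$-invariant nonempty open subset of an ergodically-acted-on poset is dense) yields $F_i \neq \emptyset \iff F_i \cap G \neq \emptyset \iff V(\bar{A}) \models \varphi(q_i, \bar{A}, v)$. Since $\pi$ is a bijection with $\pi F_1 = F_2$, $F_1 \neq \emptyset \iff F_2 \neq \emptyset$, and the two types coincide.

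The main obstacle will be the bookkeeping around how $\pi$ acts on names --- in particular the third verification, which requires a consistent convention for how $\Sigma_n$ acts on $V$-names --- and checking that for each $v \in V(\bar{A}\upharpoonright(\omega \setminus a))$ one can genuinely pick a name supported only on the complementary coordinates, so that $\pi$ leaves it fixed.
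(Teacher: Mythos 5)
Your strategy---build an automorphism $\pi$ of $\prod_n\P_n$ from the $\Sigma_n$'s carrying a name for $q_1$ to a name for $q_2$, then use ergodicity of the $\Gamma$-action to decide the relevant forcing statements---is close in spirit to the intended argument, and your first and third invariance checks for $\pi$ are fine. The gap is in the claim that each $F_i$ is invariant under all of $\bigoplus_n\Gamma_n$. That claim requires a name $\dot v$ for the parameter which is fixed by every $\gamma\in\bigoplus_n\Gamma_n$, i.e.\ a name built from $\dot{\bar A}\upharpoonright(\omega\setminus a)$ and parameters in $V$ only. Not every $v\in V(\bar A\upharpoonright(\omega\setminus a))$ admits such a name: by Fact~\ref{fact;definability-V(A)} the definition of $v$ may require parameters from the \emph{transitive closure} of $\bar A\upharpoonright(\omega\setminus a)$; for instance $v$ could be the generic point $x_m$ itself, or an element of $A_m$, for some $m\notin a$. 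Such a $v$ does have a name supported on the coordinates outside $a$ (so $\pi$ fixes it, as you need), but that name is moved by any $\gamma$ with $\gamma_m\neq e$, and no $\bigoplus_n\Gamma_n$-invariant name can exist since $x_m$ is not definable from $\bar A\upharpoonright(\omega\setminus a)$ over $V$ (the elements of $A_m$ are indiscernible). With $F_i$ invariant only under the subgroup $\bigoplus_{n\in a}\Gamma_n$, your zero--one step collapses: that subgroup does not act ergodically on the full product $\prod_n\P_n$ (it cannot reconcile incompatibilities on coordinates outside $a$), so a nonempty invariant open $F_i$ need not be dense and need not meet the generic filter.

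The repair is to factor the forcing rather than work over $V$ with the whole product. Split $\prod_n\P_n$ as $\prod_{n\notin a}\P_n\times\prod_{n\in a}\P_n$ and absorb the generic for the first factor into the ground model, so that every parameter $v\in V(\bar A\upharpoonright(\omega\setminus a))$ already lies in the ground model of the remaining forcing and invariance of $F_i$ becomes automatic. Over that model, $\prod_{n\in a}\Gamma_n$ acts ergodically on the finite product $\prod_{n\in a}\P_n$, and $q_1,q_2$ are the $\prod_{n\in a}\Gamma_n$-orbits of the two generics $y=\seqq{\sigma_{1,n}\cdot x_n}{n\in a}$ and $y'=\seqq{\sigma_{2,n}\cdot x_n}{n\in a}$; Lemma~\ref{lem;gen-orbit-type} then yields directly that the two orbits have the same type over $V(\bar A\upharpoonright(\omega\setminus a)),\bar A$, with no explicit automorphism needed (your $\pi$ is implicit in passing from $y$ to $y'$).
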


\begin{proof}
Suppose that $p$ and $p'$ are two conditions in $\mathbb{P}_{\bar{A}}$ with the same domain $a \subseteq \omega$. 
Fix $\sigma, \sigma' \in \prod_{n \in a} \Sigma_n$ such that $p = \seqq{\sigma_n \cdot (\Gamma_n \cdot x_n)}{n \in a}$ and $p' = \seqq{\sigma_n' \cdot (\Gamma_n \cdot x_n)}{n \in a}$. Let $y = \seqq{\sigma_n \cdot x_n}{n \in a}$ and $y' = \seqq{\sigma_n' \cdot x}{n \in a}$. 

Note that $y$ and $y'$ are $\prod_{n \in a} \P_n$-generic over $V[\seqq{x_n}{n \not\in a}]$, and the model $V(\bar{A})$ can be written as the generic extension $V(\bar{A} \upharpoonright (\omega \setminus a))[y]$ or $V(\bar{A} \upharpoonright (\omega \setminus a))[y']$.
Furthermore, the action of $\prod_{n\in a}\Gamma_n$ on $\prod_{n\in a}\P_n$ is ergodic, and $p,p'$ are the orbits of $y,y'$, respectively, under the action of $\prod_{n\in a}\Gamma_n$.

It follows from Lemma~\ref{lem;gen-orbit-type} that $p$ and $p'$ have the same type over ${V(\bar{A} \upharpoonright (\omega \setminus a)), \bar{A}}$.
\end{proof}

\subsection{Higher rank sets}\label{section:tools higher rank sets}
In our context (forcing over models in which choice fails), not adding reals often coincides with not adding any subsets of ordinals. Similarly, to get irreducibility to the higher Friedman-Stanley jumps, we want to stabilize higher rank sets.

For an ordinal $\theta$ define $\mathcal{P}^0(\theta)=\theta$ and $\mathcal{P}^{n+1}(\theta)=\mathcal{P}(\mathcal{P}^n(\theta))$.
Say that a set $X$ is of \textbf{rank $n$} if it is in $\mathcal{P}^n(\theta)$ for some ordinal $\theta$.
For us the ordinal $\theta$ will usually be $\omega$, in which case sets of rank $0$ are natural numbers, sets of rank $1$ are reals, sets of rank $2$ are sets of reals, and so on.
\begin{lemma}[Monro \cite{Monro1973}]\label{lem;monro-step}
Suppose $M\subseteq N\subseteq K$ are transitive class ZF models such that $M$ and $N$ agree on sets of rank $n$ and $N$ and $K$ agree on all subsets of $M$.
Then $N$ and $K$ agree on all sets of rank $n+1$.
\end{lemma}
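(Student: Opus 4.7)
The plan is to establish, for every ordinal $\theta$, the equality $\mathcal{P}^{n+1}(\theta)^N = \mathcal{P}^{n+1}(\theta)^K$. The inclusion $\subseteq$ is routine from $N \subseteq K$ and transitivity (by a small side induction checking at each rank that the $N$-version of $\mathcal{P}^k(\theta)$ lies inside the $K$-version). The entire content is the reverse inclusion: given $X \in \mathcal{P}^{n+1}(\theta)^K$, one must place $X$ in $N$.

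The key observation is that if we can show $X \subseteq M$, then the hypothesis that $N$ and $K$ agree on subsets of $M$ immediately delivers $X \in N$. So the whole proof reduces to establishing the strengthening $\mathcal{P}^n(\theta)^K \subseteq M$; in fact I would prove the even stronger three-way agreement
\[ \mathcal{P}^k(\theta)^M = \mathcal{P}^k(\theta)^N = \mathcal{P}^k(\theta)^K \quad \text{for every } k \leq n, \]
which is convenient to induct on.

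The induction on $k$ goes as follows. The base case $k=0$ is absoluteness of ordinals between transitive class ZF models. In the step from $k-1$ to $k$, the $N$-versus-$K$ direction uses the subset hypothesis: any $y \in \mathcal{P}^k(\theta)^K$ is a subset of $\mathcal{P}^{k-1}(\theta)^K$, which by the inductive hypothesis equals $\mathcal{P}^{k-1}(\theta)^M \subseteq M$; so $y$ is a subset of $M$ lying in $K$, hence in $N$. The $M$-versus-$N$ direction is where the rank $n$ hypothesis gets used, by a cumulativity observation: any element $y$ of $\mathcal{P}^k(\theta)^N$ lies in $\mathcal{P}^n(\theta)^N$, because $\mathcal{P}^k(\theta) \subseteq \mathcal{P}^n(\theta)$ provably in ZF (a trivial sub-induction showing each element of $\mathcal{P}^j(\theta)$ is itself a subset of $\mathcal{P}^j(\theta)$). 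Rank $n$ agreement then places $y$ in $M$, and a second appeal to the inductive hypothesis gives $y \in \mathcal{P}^k(\theta)^M$.

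I expect the only real obstacle to be this $M$-versus-$N$ step, where the rank $n$ hypothesis has to be ``pulled down'' to every $k \leq n$; the cumulativity of the iterated power set hierarchy is the lynchpin that makes that descent work, and keeping track of which direction of agreement is being fed into which spot is the only bookkeeping hazard. Once the three-way agreement through rank $n$ is in hand, the conclusion at rank $n+1$ is immediate: for $X \in \mathcal{P}^{n+1}(\theta)^K$ we have $X \subseteq \mathcal{P}^n(\theta)^K = \mathcal{P}^n(\theta)^M \subseteq M$, so $X \in N$ by the subset hypothesis, and $X \subseteq \mathcal{P}^n(\theta)^N$ then witnesses $X \in \mathcal{P}^{n+1}(\theta)^N$.
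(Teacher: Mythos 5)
Your proof is correct and follows essentially the same route as the paper's: an induction up to rank $n$ transferring agreement between the models, with the final step handled by the subsets-of-$M$ hypothesis. The only difference is one of explicitness — the paper's inductive step simply asserts that the rank-$i$ members of $X$ "are therefore in $M$", whereas you spell out the cumulativity fact $\mathcal{P}^k(\theta)\subseteq\mathcal{P}^n(\theta)$ and the resulting three-way agreement $\mathcal{P}^k(\theta)^M=\mathcal{P}^k(\theta)^N=\mathcal{P}^k(\theta)^K$ that justify it.
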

\begin{proof}
Prove by induction on $i\leq n+1$ that $N$ and $K$ agree on sets of rank $i$.
For $i=0$ it follows from the assumption, as any ordinal is contained in $M$.
Assume $i\leq n$ and $N,K$ agree on sets of rank $i$.
If $X\in K$ is of rank $i+1$ then the members of $X$ are of rank $i$, and are therefore in $M$. So $X\subseteq M$, and therefore $X\in N$ by assumption.
\end{proof}
In the context of Proposition~\ref{prop:technical_tool}, if $V$ and $V(\bar{A})$ agree on sets of rank $n$, then $V(\bar{A})$ and $V(\seqq{A_n}{n<\omega})$ agree on sets of rank $n+1$.

\section{Lower bounds}\label{sec;lower-bounds}

Our ultimate goal for this section will be to find an action of a tame abelian product group $a\colon \prod_{n\in\omega}\Gamma_n\curvearrowright X$ so that the corresponding orbit equivalence relation $E_a$ is not Borel reducible to $=^{+++}$. By the correspondence in Theorem~\ref{thm: HKL max}, this would imply the desired lower bound stated in Theorem~\ref{thm;main}.
This will be done in Section~\ref{subsection: non pi5}.

First, in Section~\ref{subsec;non-pi3}, we consider tame abelian product groups of the form $\Delta\times\Gamma^\N$, where $\Delta$ is either $\mathbb{Z}_p^{<\omega}$, $\mathbb{Z}$, or $\bigoplus_{\textrm{p prime}} \mathbb{Z}_p$ and $\Gamma$ is infinite, and present actions of these groups which are not Borel reducible to $=_\R^+$, and therefore not potentially $\mathbf{\Pi}^0_3$.
In Section~\ref{subsec: non Pi4} we also provide examples of tame abelian product groups with actions that are not potentially $\mathbf{\Pi}^0_4$.
These examples are optimal by the upper bounds in Section~\ref{section: upper bounds}.

In Section~\ref{subsection:CC jumps}, we will present an application of these methods to recover a result that $E_0^{[\mathbb{Z}^2]}$ is not potentially $\BPi^0_3$.

\subsection{Non potentially $\mathbf{\Pi}^0_3$ actions}\label{subsec;non-pi3}
We are now ready to show the following:
\begin{theorem}
For any countable infinite group $\Gamma$ the following groups all have actions which are not potentially $\mathbf{\Pi}^0_3$:
\begin{enumerate}
    \item $\mathbb{Z}_p^{<\omega}\times\Gamma^\omega$, for any prime $p$,
    \item $(\bigoplus_{\textrm{p prime}}\mathbb{Z}_p)\times\Gamma^\omega$, and
    \item $\mathbb{Z}\times\Gamma^\omega$.
\end{enumerate}  
\end{theorem}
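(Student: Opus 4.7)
The plan is to adapt the template from Section~\ref{section: motivation non Pi3}. Part (1) is already established by Proposition~\ref{prop:motivation non-Pi3}, so it remains to supply, in cases (2) and (3), an analogous action together with a Cohen-generic invariant whose associated finite-choice poset $\P_{\bar{A}}$ carries an ergodic action of the ``shift'' group and adds no reals over $V(\bar{A})$. Once this is in place, Proposition~\ref{prop:symmetric-cohen-properties} together with Corollary~\ref{cor;irred-to-jumps} forces any set-of-reals invariant definable from the full invariant $\hat{A}$ to lie in $V(\bar{A}) \subsetneq V(\hat{A})$, and Theorem~\ref{theorem:invariants_borel_reduction} combined with Theorem~\ref{thm: HKL max} then yields non-potentially $\mathbf{\Pi}^0_3$.

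For part (2), I would fix an enumeration $\seqq{p_n}{n<\omega}$ of all primes and set $X_n = (2^\Gamma)^{p_n}$, on which $\Gamma$ acts diagonally and $\mathbb{Z}_{p_n}$ acts by cyclic rotation of the $p_n$ coordinates; these commute, so Definition~\ref{def;direct-sum-action} gives an action of $(\bigoplus_n \mathbb{Z}_{p_n}) \times \Gamma^\omega \cong (\bigoplus_{p \textrm{ prime}} \mathbb{Z}_p) \times \Gamma^\omega$ on $\prod_n X_n$, with absolute invariant $\hat{A} = (\bigoplus_n \mathbb{Z}_{p_n}) \cdot \seqq{\Gamma \cdot x_n}{n<\omega}$. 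Taking $x$ Cohen-generic and $\bar{A} = \seqq{\mathbb{Z}_{p_n} \cdot (\Gamma \cdot x_n)}{n<\omega}$, Proposition~\ref{prop:technical_tool} shows $\bar{A}$ is symmetric over $V$; Proposition~\ref{prop:symmetric-cohen-properties} then shows the forcing $\P_{\bar{A}}$ adds no reals and that $\seqq{\Gamma \cdot x_n}{n<\omega}$ is not in $V(\bar{A})$, while Example~\ref{ex;finite-choice} supplies an ergodic action of $\bigoplus_n \mathbb{Z}_{p_n}$ on $\P_{\bar{A}}$. Feeding this into Corollary~\ref{cor;irred-to-jumps} closes case (2).

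For part (3), the same recipe is followed but $\mathbb{Z}$, being generated by a single element, cannot act coordinatewise with finite support on countably many factors. Instead I would have $\mathbb{Z}$ act diagonally on $\prod_n X_n$ via the quotient maps $\mathbb{Z} \twoheadrightarrow \mathbb{Z}_{p_n}$, giving an action of $\mathbb{Z} \times \Gamma^\omega$ with absolute invariant $\mathbb{Z} \cdot \seqq{\Gamma \cdot x_n}{n<\omega}$, and take $\bar{A} = \seqq{\mathbb{Z} \cdot (\Gamma \cdot x_n)}{n<\omega}$. The principal obstacle, and the only real point of departure from part (2), is that the group inducing ergodicity on $\P_{\bar{A}}$ has shrunk from the large direct sum $\bigoplus_n \mathbb{Z}_{p_n}$ to a single copy of $\mathbb{Z}$; this is exactly resolved by Example~\ref{example: chinese remainder}, which uses the Chinese remainder theorem to find, for any two conditions with the same finite support, a single integer that simultaneously realizes all the required rotations modulo distinct primes. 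With this ergodicity in hand, symmetry of $\bar{A}$ again follows from Proposition~\ref{prop:technical_tool} and the remainder of the argument is verbatim the same as in part (2).
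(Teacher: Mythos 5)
Your proposal is correct and follows essentially the same route as the paper: part (1) via Proposition~\ref{prop:motivation non-Pi3}, and parts (2) and (3) via the actions $\alpha_\oplus$ and $\alpha_{\mathbb{Z}}$ on $\prod_p (2^\Gamma)^p$, with symmetry of $\bar{A}$ from Proposition~\ref{prop:technical_tool} and ergodicity on $\P_{\bar{A}}$ supplied by Example~\ref{ex;finite-choice} in case (2) and the Chinese-remainder Example~\ref{example: chinese remainder} in case (3). The only cosmetic slip is the suggestion that symmetry of $\bar{A}$ depends on the ergodicity of the $\mathbb{Z}$-action on $\P_{\bar{A}}$; in fact it depends only on the ergodicity of the $\Gamma$-shift actions on the Cohen posets, and is identical in both cases.
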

Recall from Section~\ref{section: motivation non Pi3} the action $\alpha_p$ of $\Z_p^{<\omega}\times\Gamma^\omega$ on $X_p^\omega$, and the induced orbit equivalence relation $E_{\alpha_p}$, where  $X_p=(2^\Gamma)^p$.
Part (1) above then follows from Proposition~\ref{prop:motivation non-Pi3}. 
By Propositions \ref{prop:shift_action_ergodic} and \ref{prop:technical_tool}, we conclude that the sequence $\bar{A}$ in Section~\ref{section: motivation non Pi3} is symmetric.
Lemma~\ref{lemma: motivation non pot Pi3} then follows from Proposition~\ref{prop:symmetric-cohen-properties}, which concludes Proposition~\ref{prop:motivation non-Pi3}. 


We now establish parts (2) and (3).
As before, $\Gamma\curvearrowright X_p$ acts diagonally and $\mathbb{Z}_p$ acts on $X_p$ by rotations.
These actions commute, giving an action of $\mathbb{Z}_p\times \Gamma \curvearrowright X_p$.
Identifying $\bigoplus_{p \textrm{ prime}} \mathbb{Z}_p$ as a subgroup of $\prod_p \mathbb{Z}_p$ as before, we get the natural componentwise action
\[ \bigoplus_{p \textrm{ prime}} \Z_p \curvearrowright \prod_{p \textrm{ prime}} X_p\]
which commutes with the componentwise action
\[ \Gamma^{\omega} \curvearrowright \prod_{p \textrm{ prime}} X_p\]
and thus we get the action
\[\alpha_{\oplus} : \bigoplus_{p\textrm{ prime}} \mathbb{Z}_p\times\Gamma^\omega \curvearrowright \prod_p X_p.\]

Considering the natural surjection of $\mathbb{Z}$ onto $\mathbb{Z}_p$, we get the associated actions $\mathbb{Z} \curvearrowright X_p$, from which we can define the diagonal action $\mathbb{Z} \curvearrowright \prod_p X_p$, rotating all coordinates simultaneously. This action commutes with the action $\Gamma^\omega \curvearrowright \prod_p X_p$, so we can also define
\[\alpha_{\mathbb{Z}} : \mathbb{Z}\times\Gamma^\omega \curvearrowright \prod_p X_p.\]

\begin{proposition}\label{proposition: non Pi3 actions of Z and direct sum}
The orbit equivalence relations $E_{\alpha_\oplus}$ and $E_{\alpha_\mathbb{Z}}$ are not Borel reducible to $=^+$.
\end{proposition}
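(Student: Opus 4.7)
The plan is to carry out, mutatis mutandis, the argument sketched in Section~\ref{section: motivation non Pi3} for Proposition~\ref{prop:motivation non-Pi3}, handling both actions in parallel. Let $\Q$ be the Cohen poset that adds a generic $\seqq{x_p}{p\textrm{ prime}} \in \prod_p X_p$ over $V$, and set $A_p = \Gamma \cdot x_p$ and $\bar{A} = \seqq{\Z_p \cdot A_p}{p\textrm{ prime}}$. Unwinding the definitions of $\alpha_\oplus$ and $\alpha_\Z$ exactly as for $\alpha_p$, the absolute invariants at the generic point are
\[ \hat{A}_{\oplus} = \big(\textstyle\bigoplus_p \Z_p\big) \cdot \seqq{A_p}{p} \quad \textrm{and} \quad \hat{A}_{\Z} = \Z \cdot \seqq{A_p}{p}, \]
respectively (where in the second case $\Z$ acts diagonally via the surjections $\Z \twoheadrightarrow \Z_p$). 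By Theorem~\ref{theorem:invariants_borel_reduction} together with Corollary~\ref{cor : irred to cong}(1), it suffices to show in each case that $V(\hat{A}) \neq V(B)$ for any set of reals $B$ in $V(\hat{A})$ that is definable from $\hat{A}$ over $V$.

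The backbone is an application of Proposition~\ref{prop:technical_tool} with $\P_p$ the Cohen poset on $X_p$, $\Gamma_p = \Gamma$ acting by shift, and $\Sigma_p = \Z_p$ acting by rotation. Ergodicity of $\Gamma \curvearrowright \P_p$ follows from Proposition~\ref{prop:shift_action_ergodic} because $\Gamma$ is infinite, so $\bar{A}$ is symmetric over $V$. Proposition~\ref{prop:symmetric-cohen-properties} then delivers three facts in one stroke: the canonical choice sequence $\seqq{A_p}{p}$ is $\P_{\bar{A}}$-generic over $V(\bar{A})$; forcing with $\P_{\bar{A}}$ over $V(\bar{A})$ adds no new subsets of $V(\bar{A})$, in particular no new reals; and $\seqq{A_p}{p} \notin V(\bar{A})$. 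By transitivity of $V(\bar{A})$, the last point forces $\hat{A} \notin V(\bar{A})$ as well, so $V(\bar{A}) \subsetneq V(\hat{A})$.

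The remaining ingredient is ergodicity of the relevant shift group acting on $\P_{\bar{A}}$: for $\hat{A}_\oplus$, the coordinate-wise action of $\bigoplus_p \Z_p$ is ergodic by Example~\ref{ex;finite-choice}; for $\hat{A}_\Z$, the diagonal action of $\Z$ is ergodic by Example~\ref{example: chinese remainder}, which was constructed precisely for this purpose. In both cases $\hat{A}$ is exactly the orbit of the generic $\seqq{A_p}{p}$ under this shift group, and Lemma~\ref{lem;weak-hom}, applied over the base model $V(\bar{A})$, shows that any subset of $V(\bar{A})$ in $V(\hat{A})$ which is definable from $\hat{A}$ over $V$ already lies in $V(\bar{A})$. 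Since $\P_{\bar{A}}$ adds no reals over $V(\bar{A})$, every real of $V(\hat{A})$ lies in $V(\bar{A})$, hence so does every set of reals $B \in V(\hat{A})$; therefore $V(B) \subseteq V(\bar{A}) \subsetneq V(\hat{A})$, which is the desired contradiction with $V(\hat{A}) = V(B)$. I do not foresee any serious obstacle beyond choosing the correct ergodicity principle for the shift group, where Example~\ref{example: chinese remainder} is the essentially new input needed for the $\Z$ case.
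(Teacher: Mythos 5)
Your proposal is correct and follows essentially the same route as the paper's proof: the same generic sequence $\bar{A}=\seqq{\Z_p\cdot A_p}{p}$ made symmetric via Propositions~\ref{prop:shift_action_ergodic} and~\ref{prop:technical_tool}, the same use of Proposition~\ref{prop:symmetric-cohen-properties} and Lemma~\ref{lem;weak-hom} (i.e.\ Corollary~\ref{cor;irred-to-jumps}) to trap definable sets of reals in $V(\bar{A})$, and the same two ergodicity inputs (Example~\ref{ex;finite-choice} for $\bigoplus_p\Z_p$ and the Chinese-remainder Example~\ref{example: chinese remainder} for the diagonal $\Z$-action), concluding via Corollary~\ref{cor : irred to cong}(1).
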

\begin{proof}
Let $\mathbb{Q}_p$ be Cohen forcing on $X_p$.
Let $\mathbb{Q}$ be the finite support product of $\mathbb{Q}_p$, $\seqq{x_p}{p\textrm{ prime}}$ a $\mathbb{Q}$-generic over $V$, $A_p=\Gamma\cdot x_p$ and $\bar{A}=\seqq{\Z_p \cdot A_p}{p\textrm{ prime}}$.
By Proposition \ref{prop:technical_tool}, the sequence $\bar{A}$ is symmetric over $V$. Thus forcing with $\P_{\bar{A}}$ over $V(\bar{A})$ adds no new reals by Proposition \ref{prop:symmetric-cohen-properties}.

As before $\hat{A}=\bigoplus_p\mathbb{Z}_p\cdot \seqq{A_n}{n<\omega}$ is an absolute invariant for $E_{\alpha_\oplus}$, and by Example~\ref{ex;finite-choice}, the action of $\bigoplus_p \mathbb{Z}_p$ on $\mathbb{P}_{\bar{A}}$ is ergodic. As in Corollary~\ref{cor;irred-to-jumps}, it follows from Lemma~\ref{lem;weak-hom} that if $B$ is a set of reals definable from $\bigoplus_p\mathbb{Z}_p\cdot \seqq{A_n}{n<\omega}$ over $V(\bar{A})$, then $B$ in the base model $V(\bar{A})$. In particular, $V(B)\subset V(\bar{A})$ is strictly smaller than $V(\hat{A})$. It follows from Corollary~\ref{cor : irred to cong} (1) that $E_{\alpha_{\oplus}} \not\le_B =^+$.
    
Similarly, $\mathbb{Z}\cdot \seqq{A_n}{n<\omega}$ is a generic invariant for $E_{\alpha_{\mathbb{Z}}}$.
$\mathbb{Z}$ acts on the poset $\mathbb{P}_{\bar{A}}$ by rotating all coordinates simultanously.
This action is ergodic by Example~\ref{example: chinese remainder}.
The rest of the proof is similar to the above.
\end{proof}

In the next stage of our construction we actually need a more uniform version of what we proved here.
We wish to find a sequence $\seqq{A_{n, p}}{p \textrm{ prime},\; n < \omega}$ 
such that each $\mathbb{Z}_p^{<\omega} \cdot \seqq{A_{n, p}}{n < \omega}$ is an absolute invariant for $E_{\alpha_p}$, 
and the sequence $\bar{A} = \seqq{\Z_p \cdot A_{n, p}}{p \textrm{ prime},\; n < \omega}$ is symmetric over $V$.

To do this, for each prime $p$, let $\mathbb{Q}_p$ the poset to add a Cohen-generic element of $X_p^\omega$, which we can write as the finite-support product of $\mathbb{Q}_{p, n}$, where each $\mathbb{Q}_{p, n}$ adds a Cohen-generic element of $X_p$. 

Force with $\prod_{p \textrm{ prime}} \Q_p$ over $V$ to get a generic sequence $\seqq{x_{p, n}}{p \textrm{ prime},\; n < \omega}$. Define $A_{p, n} = \Gamma \cdot x_n$ for $n < \omega$ and prime $p$, and $\bar{A} = \seqq{\Z_p \cdot A_{p, n}}{p \textrm{ prime},\; n < \omega}$.
It follows from Propositions \ref{prop:shift_action_ergodic} and \ref{prop:technical_tool} that $\bar{A}$ is symmetric over $V$.

\subsection{Non potentially $\BPi^0_4$ actions}\label{subsec: non Pi4}
In this section we show
\begin{theorem}\label{thm: non Pi4 subsection}
For any countable infinite group $\Gamma$ the following groups have actions which are not potentially $\mathbf{\Pi}^0_4$:
\begin{enumerate}
    \item $\bigoplus_{p\textrm{ prime}}\mathbb{Z}_p\times\prod_{p\textrm{ prime}}\mathbb{Z}_p^{<\omega}\times\Gamma^\omega$;
    \item $\Z\times\prod_{p\textrm{ prime}}\mathbb{Z}_p^{<\omega}\times\Gamma^\omega$.
\end{enumerate}  
\end{theorem}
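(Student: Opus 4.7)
The plan is to iterate the construction of Section~\ref{subsec;non-pi3} one more time, producing a continuous action $\alpha$ of the respective group whose generic absolute invariant $\hat{\hat{A}}$ satisfies: $V(\hat{\hat{A}})$ is not generated over $V$ by any set of sets of reals. By Theorem~\ref{thm: HKL max} and Corollary~\ref{cor : irred to cong}(2), this gives $E_\alpha \not\leq_B =_\R^{++}$, and hence $E_\alpha$ is not potentially $\BPi^0_4$.

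The action lives on $Y = \prod_p X_p^\omega$, carrying the inner action of $\prod_p \mathbb{Z}_p^{<\omega} \times \Gamma^\omega$ from Section~\ref{subsec;non-pi3} ($\Gamma$-shifts together with $p$-tuple rotations within each coordinate), and an outer action: for case~(1), $\bigoplus_p \mathbb{Z}_p$ rotates all coordinates of $X_p^\omega$ diagonally in the $p$-slice; for case~(2), $\mathbb{Z}$ acts diagonally through the surjections $\mathbb{Z} \twoheadrightarrow \mathbb{Z}_p$. Following Section~\ref{section: tools actions on orbits}, the assignment $x \mapsto \hat{\hat{A}} := G_{\text{outer}} \cdot \seqq{\hat{A}_p}{p}$, where $\hat{A}_p := \mathbb{Z}_p^{<\omega} \cdot \seqq{\Gamma \cdot x_{p,n}}{n}$, is an absolute classification of $E_\alpha$.

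Force over $V$ with $\prod_p \mathbb{Q}_p$ (Cohen for $Y$) to obtain $\bar{x} = \seqq{x_{p,n}}{p,n}$, and form the two symmetric sequences $\bar{A} = \seqq{\mathbb{Z}_p \cdot \Gamma \cdot x_{p,n}}{p,n}$ and $\bar{B} = \seqq{\mathbb{Z}_p \cdot \hat{A}_p}{p}$. The uniform construction at the end of Section~\ref{subsec;non-pi3} gives that $\bar{A}$ is symmetric over $V$. To see $\bar{B}$ is symmetric over $V(\bar{A})$, I would re-apply Proposition~\ref{prop:technical_tool} taking the ZF-model $V(\bar{A})$ as base: for each prime $p$, the $p$-slice poset $\mathbb{P}_{\bar{A}^{(p)}}$, where $\bar{A}^{(p)} = \seqq{\mathbb{Z}_p \cdot A_{p,n}}{n}$, carries the $\mathbb{Z}_p^{<\omega}$-action (ergodic by Example~\ref{ex;finite-choice}) commuting with the diagonal $\mathbb{Z}_p$-action, and $\seqq{A_{p,n}}{n}$ is $\mathbb{P}_{\bar{A}^{(p)}}$-generic over $V(\bar{A})$ by Proposition~\ref{prop:symmetric-cohen-properties}(1) applied at the first level. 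For the model tower $V \subseteq V(\bar{A}) \subseteq V(\bar{A}, \bar{B}) \subseteq V(\hat{\hat{A}})$, note that $V(\hat{\hat{A}})$ is the $\mathbb{P}_{\bar{B}}$-generic extension $V(\bar{A}, \bar{B})[\seqq{\hat{A}_p}{p}]$; two Monro step-ups (Lemma~\ref{lem;monro-step}) at ranks $0$ and $1$, fed by Proposition~\ref{prop:symmetric-cohen-properties}(2) at each level, give that $V(\bar{A})$ and $V(\bar{A}, \bar{B})$ share the reals, and then that $V(\bar{A}, \bar{B})$ and $V(\hat{\hat{A}})$ share the sets of reals.

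Suppose $E_\alpha \leq_B =^{++}_\R$; then Corollary~\ref{cor : irred to cong}(2) supplies a set of sets of reals $B \in V(\hat{\hat{A}})$, definable from $\hat{\hat{A}}$ over $V$, with $V(B) = V(\hat{\hat{A}})$. Since $B \subseteq \mathcal{P}(\R)^{V(\hat{\hat{A}})} = \mathcal{P}(\R)^{V(\bar{A}, \bar{B})}$, we have $B \subseteq V(\bar{A}, \bar{B})$. The outer group acts ergodically on $\mathbb{P}_{\bar{B}}$ (Example~\ref{ex;finite-choice} for case~(1); the Chinese-Remainder argument of Example~\ref{example: chinese remainder} for case~(2)), and $\hat{\hat{A}}$ is the orbit of the generic choice function $\seqq{\hat{A}_p}{p}$ under this action, so Lemma~\ref{lem;weak-hom} applied over $V(\bar{A}, \bar{B})$ yields $B \in V(\bar{A}, \bar{B})$. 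But by Proposition~\ref{prop:symmetric-cohen-properties}(3), $V(\bar{A}, \bar{B})$ contains no choice function for $\bar{B}$, while $V(\hat{\hat{A}})$ does, so $V(B) \subseteq V(\bar{A}, \bar{B}) \subsetneq V(\hat{\hat{A}})$, contradicting $V(B) = V(\hat{\hat{A}})$. The main obstacle I expect is carrying out the iteration of Propositions~\ref{prop:technical_tool} and~\ref{prop:symmetric-cohen-properties} with the ZF-base $V(\bar{A})$ replacing the ZFC ground model, and composing the two Monro step-ups cleanly so that the agreement on sets of reals at the top of the tower is valid.
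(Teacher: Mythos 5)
Your proposal is correct and follows essentially the same route as the paper: the same two-level iteration (the paper's $B_p$, $\bar B$, $\hat B$ are your $\hat A_p$, $\bar B$, $\hat{\hat A}$), the same symmetry claims via Proposition~\ref{prop:technical_tool}, the same two Monro step-ups to transfer agreement on sets of reals to the top of the tower, and the same ergodicity-plus-Lemma~\ref{lem;weak-hom} contradiction (with Example~\ref{example: chinese remainder} handling the $\mathbb{Z}$ case). No gaps.
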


For every prime $p$, let $X_p = (2^\Gamma)^p$, and define $\alpha_p$ to be the action
\[ \alpha_p: \mathbb{Z}_p^{<\omega} \times \Gamma^\omega \curvearrowright Y_p\]
as before, where $Y_p = X_p^\omega$. We saw that $E_{\alpha_p}$ is not Borel-reducible to $=^+$ for every prime $p$, and that 
\[ x \mapsto \mathbb{Z}_p^{<\omega} \cdot \seqq{\Gamma \cdot x_n}{n < \omega}\]
is an absolute classification of $E_{\alpha_p}$.

For every prime $p$, the group $\mathbb{Z}_p$ acts on $Y_p$ with the diagonal action,
\[ \sigma \cdot \seqq{x_n}{n < \omega} = \seqq{\sigma \cdot x_n}{n < \omega},\]
and this action commutes with $\alpha_p$. Thus the coordinate-wise actions 
\[\bigoplus_{p \textrm{ prime}}\mathbb{Z}_p \curvearrowright \prod_{p \textrm{ prime}} Y_p,\]
and
\[\alpha_{\prod p} : \prod_{p \textrm{ prime}} (\mathbb{Z}_p^{<\omega} \times \Gamma^\omega) \curvearrowright \prod_{p \textrm{ prime}} Y_p\]
commute as well.

As in Section~\ref{section: tools actions on orbits}, we get an action
\[ \beta_{\oplus} : (\bigoplus_{p \textrm{ prime}} \mathbb{Z}_p) \times \prod_{p \textrm{ prime}} (\mathbb{Z}_p^{<\omega} \times \Gamma^\omega) \curvearrowright \prod_{p \textrm{ prime}} Y_p.\]

By considering the natural surjections of $\mathbb{Z}$ onto $\mathbb{Z}_p$, we can as before consider the diagonal action $\mathbb{Z} \curvearrowright \prod_{p \textrm{ prime}} Y_p$ which also commutes with the action $\alpha_{\prod p}$ and thus we can also define the action

\[ \beta_{\mathbb{Z}} : \mathbb{Z} \times \prod_{p\textrm{ prime}} (\mathbb{Z}_p^{<\omega} \times \Gamma^\omega) \curvearrowright \prod_{p \textrm{ prime}} Y_p.\]
Note that the groups acting in $\beta_{\oplus}$ and $\beta_\Z$ respectively are isomorphic to the group in Theorem~\ref{thm: non Pi4 subsection} parts (1) and (2) respectively.
\begin{proposition}
The orbit equivalence relations $E_{\beta_{\oplus}}$ and $E_{\beta_{\mathbb{Z}}}$ are not Borel-reducible to $=^{++}$.
\end{proposition}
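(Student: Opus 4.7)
The plan is to iterate the construction of Proposition~\ref{proposition: non Pi3 actions of Z and direct sum}, bumping up one level and invoking Corollary~\ref{cor : irred to cong}(2) instead of (1). First I identify the absolute classification: the $E_{\beta_\oplus}$-invariant of $(y_p)_p \in \prod_p Y_p$ is $\hat{A} := (\bigoplus_p \mathbb{Z}_p) \cdot \seqq{A_p}{p \textrm{ prime}}$, where each $A_p := \mathbb{Z}_p^{<\omega}\cdot\seqq{\Gamma\cdot y_{p,n}}{n<\omega}$ is the $E_{\alpha_p}$-invariant of the $p$-th coordinate; and similarly for $\beta_\Z$. Writing $\bar{A}':=\seqq{A_p}{p}$, one has $V(\hat{A})=V(\bar{A}')$ exactly as in the previous proposition.

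I will build $\hat{A}$ generically in two stages. Start with the base sequence $\bar{A}=\seqq{A_{p,n}}{p\textrm{ prime},\ n<\omega}$ from the end of Section~\ref{subsec;non-pi3}, which is symmetric over $V$ by Propositions~\ref{prop:shift_action_ergodic} and~\ref{prop:technical_tool}. Then force with $\P_{\bar{A}}$ over $V(\bar{A})$ to obtain a generic choice sequence $\seqq{C_{p,n}}{p,n}$, and set $A_p:=\mathbb{Z}_p^{<\omega}\cdot\seqq{C_{p,n}}{n}$ and $\bar{A}'':=\seqq{\mathbb{Z}_p\cdot A_p}{p}$. The key intermediate claim is that $\bar{A}''$ is symmetric over $V(\bar{A})$. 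I will verify this by applying Proposition~\ref{prop:technical_tool} inside $V(\bar{A})$: the posets are $\P_p:=\P_{\seqq{A_{p,n}}{n}}$, the ergodic actions $\mathbb{Z}_p^{<\omega}\curvearrowright\P_p$ come from Example~\ref{ex;finite-choice}, and the commuting actions are the diagonal rotations $\mathbb{Z}_p\curvearrowright\P_p$. Since $\prod_p\P_p$ identifies naturally with $\P_{\bar{A}}$, the generic $\seqq{C_{p,n}}{p,n}$ is exactly a $\prod_p\P_p$-generic over $V(\bar{A})$, as required.

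Given this symmetry, Proposition~\ref{prop:symmetric-cohen-properties}(2) gives that $\P_{\bar{A}''}$ adds no new subsets of $V(\bar{A})$ when forced over $V(\bar{A}'')$, while (3) gives $\bar{A}'\notin V(\bar{A}'')$, so $V(\bar{A}'')\subsetneq V(\hat{A})=V(\bar{A}'')[\bar{A}']$. Since $V(\bar{A})$ and $V(\bar{A}'')$ agree on reals (the $\P_{\bar{A}}$-extension of $V(\bar{A})$ adds no subsets of $V$), Monro's Lemma~\ref{lem;monro-step} promotes this to agreement of $V(\bar{A}'')$ and $V(\hat{A})$ on sets of reals. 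Now I apply Lemma~\ref{lem;weak-hom} to the ergodic action $\bigoplus_p\mathbb{Z}_p\curvearrowright\P_{\bar{A}''}$ (ergodic by Example~\ref{ex;finite-choice}; for $\beta_\Z$ substitute the diagonal $\mathbb{Z}\curvearrowright\P_{\bar{A}''}$ via $\mathbb{Z}\twoheadrightarrow\mathbb{Z}_p$, ergodic by Example~\ref{example: chinese remainder}): any set of sets of reals $B\in V(\hat{A})$ definable from $\hat{A}$ has its elements in $V(\bar{A}'')$ by the Monro step, so $B\subseteq V(\bar{A}'')$, and the lemma yields $B\in V(\bar{A}'')$. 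Hence $V(B)\subseteq V(\bar{A}'')\subsetneq V(\hat{A})$, and Corollary~\ref{cor : irred to cong}(2) delivers $E_{\beta_\oplus},E_{\beta_\Z}\not\le_B =^{++}$. The main obstacle is the symmetry claim: verifying that the posets $\P_p$ and the commuting ergodic and diagonal actions all live in $V(\bar{A})$ and satisfy the hypotheses of Proposition~\ref{prop:technical_tool}. After that the rest is a routine one-step Monro upgrade of the $\mathbf{\Pi}^0_3$ argument.
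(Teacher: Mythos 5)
Your proposal is correct and follows essentially the same route as the paper's proof: the same two-stage symmetric-sequence construction verified via Proposition~\ref{prop:technical_tool}, the same Monro-step upgrade to agreement on sets of reals, and the same combination of Lemma~\ref{lem;weak-hom} with Corollary~\ref{cor : irred to cong}(2). The only differences are notational (your $A_p,\bar{A}'',\hat{A}$ are the paper's $B_p,\bar{B},\hat{B}$) and that you realize the second-stage invariant by forcing with $\P_{\bar{A}}$ rather than observing that the actual choice sequence is already generic, which is equivalent by Proposition~\ref{prop:symmetric-cohen-properties}(1).
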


\begin{proof}
We will prove this result just for $E_{\beta_{\oplus}}$ since the argument for $E_{\beta_{\mathbb{Z}}}$ is analogous as in the previous section.

We established at the end of the previous section that there is a sequence of $A_{n, p}$ for prime $p$ and $n < \omega$
such that each $\mathbb{Z}_p^{<\omega} \cdot \seqq{A_{n, p}}{n < \omega}$ is an absolute invariant for $E_{\alpha_p}$, 
and the sequence $\bar{A} = \seqq{\Z_p \cdot A_{n, p}}{p \textrm{ prime},\; n < \omega}$ is symmetric over $V$.
We now define $B_p$ to be $\Z_p^{<\omega} \cdot \seqq{A_{n, p}}{n < \omega}$ for each prime $p$.
Observe that $\seqq{B_p}{p \textrm{ prime}}$ is an absolute invariant for $\alpha_{\prod p}$, and
$\hat{B} = \bigoplus_{p \textrm{ prime}} \Z_p \cdot \seqq{B_p}{p \textrm{ prime}}$ is an absolute invariant for $\beta_{\oplus}$.
Now define $\bar{B}$ to be $\seqq{\Z_p \cdot B_p}{p \textrm{ prime}}$.

\begin{claim}\label{claim:symmetric2}
The sequence $\bar{B}$ is symmetric over $V(\bar{A})$.
\end{claim}
\begin{proof}
The claim is a direct application of Proposition \ref{prop:technical_tool} as follows.
Let $\bar{A}_p=\seqq{\Z_p\cdot A_{n,p}}{n<\omega}$, and $c_p=\seqq{A_{n,p}}{n<\omega}$. Then $\seqq{c_p}{p\textrm{ prime}}$ is generic for $\prod_{p\textrm{ prime}}\P_{\bar{A}_p}$. 
$\Gamma_p=\Z_p^{<\omega}$ acts ergodically on $\P_{\bar{A}_p}$, by Example~\ref{ex;finite-choice}. Furthermore, the actions of $\Sigma_p=\Z_p$ on $\P_{\bar{A}_p}$ commutes with the action of $\Gamma_p$. 
\end{proof}
Note that the model generated by our invariant $\hat{B}$, $V(\hat{B})$, is equal to $V(\seqq{B_p}{p \textrm{ prime}})$, and this is a strict extension of $V(\bar{B})$, since $V(\bar{B})$ does not have a choice function through $\bar{B}$.

\begin{claim}\label{claim:same_sets_of_reals}
The models $V(\bar{B})$ and $V(\seqq{B_p}{p \textrm{ prime}})$ have the same sets of reals.
\end{claim}

\begin{proof}
Recall that $V(\bar{A})$ and $V(\seqq{A_{n, p}}{p \textrm{ prime},\; n < \omega})$ have the same reals. Since $\bar{B}$ is symmetric over $V(\bar{A})$, Proposition~\ref{prop:symmetric-cohen-properties} implies that $V(\bar{B})$ and $V(\seqq{B_p}{p \textrm{ prime}})$ have the same subsets of $V(\bar{A})$. By Lemma~\ref{lem;monro-step} they have the same sets of reals.
\end{proof}

Now suppose for the sake of contradiction that there is a Borel reduction from $E_{\beta_\oplus}$ to $=^{++}$. 
By Corollary~\ref{cor : irred to cong} (2), there is a set of sets of reals $I$ such that $V(\hat{B}) = V(I)$ and $I$ is definable from $\hat{B}$ over $V$.
by Claim~\ref{claim:same_sets_of_reals}, $I\subset V(\bar{B})$.
By Example~\ref{ex;finite-choice}, the action $\bigoplus_{p \textrm{ prime}} \Z_p \curvearrowright \P_{\bar{B}}$
is ergodic.
It follows from Lemma \ref{lem;weak-hom} that $I \in V(\bar{B})$. In particular, $V(I)$ is contained in $V(\bar{B})$, and therefore is not equal to $V(\hat{B})$, a contradiction.
\end{proof}

\begin{remark}\label{remark: restriction to primes in P}
Suppose $P$ is an infinite set of primes. By following the above, but ranging $p$ only over primes in $P$, we get actions
\[ \beta_{\oplus P} : \bigoplus_{p \in P} \mathbb{Z}_p \times \prod_{p \in P} (\mathbb{Z}_p^{<\omega} \times \Gamma^\omega) \curvearrowright \prod_{p \in P} Y_p\]
and
\[ \mathbb{Z} \times \prod_{p \in P} (\mathbb{Z}_p^{<\omega} \times \Gamma^\omega) \curvearrowright \prod_{p \in P} Y_p\]
inducing orbit equivalence relations which are not Borel reducible to $=^{++}$.
These will be used in the following section.
\end{remark}

\subsection{Non potentially $\BPi^0_5$-actions}\label{subsection: non pi5}
Fix a partition $P_0,P_1,P_2,...$ of the prime numbers into infinite sets.
\begin{theorem}\label{thm: non Pi5 subsection}
For any countable infinite group $\Gamma$ the group
\begin{equation*}
    \mathbb{Z}^{<\omega}\times\prod_n\bigoplus_{p\in P_n}\mathbb{Z}_p\times\prod_p\mathbb{Z}_p^{<\omega}\times\Gamma^\omega
\end{equation*}
has an action which is not potentially $\mathbf{\Pi}^0_5$.
\end{theorem}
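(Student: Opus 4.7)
The plan is to iterate the non-$\BPi^0_4$ construction of the previous section one more time, layering a $\Z^{<\omega}$-action on top whose individual $\Z$-components act through the Chinese remainder surjections. Concretely, for each $n$ apply Remark~\ref{remark: restriction to primes in P} with the set of primes $P_n$ to obtain an action $\beta_{\oplus P_n}$ of $G_n = \bigoplus_{p \in P_n}\Z_p \times \prod_{p \in P_n}(\Z_p^{<\omega} \times \Gamma^\omega)$ on $Z_n = \prod_{p \in P_n} Y_p$, whose orbit equivalence relation is not Borel reducible to $=^{++}$. On each $Z_n$ define additionally a $\Z$-action diagonally via the surjections $\Z \to \Z_p$ for $p \in P_n$; this action commutes with $\beta_{\oplus P_n}$. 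On $W = \prod_n Z_n$ take the finite-support direct sum $\Z^{<\omega}$ of these $\Z$-actions together with the componentwise action $\prod_n G_n$. This gives an action $\gamma$ of a group isomorphic to the one in the theorem statement.

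For the generic invariants, let $x_{n, p, k}$ (for $n<\omega$, $p \in P_n$, $k<\omega$) be Cohen-generic over $V$, and define $A_{n, p, k} = \Gamma \cdot x_{n, p, k}$, $D_{n, p} = \Z_p^{<\omega} \cdot \seqq{A_{n, p, k}}{k < \omega}$, and $\hat{C}_n = \bigoplus_{p \in P_n}\Z_p \cdot \seqq{D_{n, p}}{p \in P_n}$, so that $\hat{E} = \Z^{<\omega} \cdot \seqq{\hat{C}_n}{n<\omega}$ is an absolute $E_\gamma$-invariant with $V(\hat{E}) = V(\seqq{\hat{C}_n}{n<\omega})$. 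Consider the three symmetric sequences $\bar{A} = \seqq{\Z_p \cdot A_{n, p, k}}{n, p, k}$, $\bar{B} = \seqq{\Z_p \cdot D_{n, p}}{n, p}$, and $\bar{C} = \seqq{\Z \cdot \hat{C}_n}{n < \omega}$. By Propositions~\ref{prop:shift_action_ergodic} and~\ref{prop:technical_tool}, $\bar{A}$ is symmetric over $V$, and by the argument of Claim~\ref{claim:symmetric2}, $\bar{B}$ is symmetric over $V(\bar{A})$. The key new step is that $\bar{C}$ is symmetric over $V(\bar{B})$: this also follows from Proposition~\ref{prop:technical_tool}, using for each $n$ the ergodicity of $\bigoplus_{p \in P_n}\Z_p$ acting on the finite-choice poset through $\seqq{\Z_p \cdot D_{n,p}}{p \in P_n}$ (Example~\ref{ex;finite-choice}) together with the commuting diagonal $\Z$-action via the surjections $\Z \to \Z_p$.

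The contradiction is then standard. Iterating Proposition~\ref{prop:symmetric-cohen-properties}(2) and Lemma~\ref{lem;monro-step} twice, $V(\bar{C})$ and $V(\seqq{\hat{C}_n}{n < \omega}) = V(\hat{E})$ agree on sets of rank $3$: symmetry of $\bar{B}$ over $V(\bar{A})$ yields agreement on sets of rank $2$ between $V(\bar{B})$ and $V(\seqq{D_{n,p}}{n,p})$, and symmetry of $\bar{C}$ over $V(\bar{B})$ then lifts this to agreement on sets of rank $3$ between $V(\bar{C})$ and $V(\seqq{\hat{C}_n}{n < \omega})$. Suppose for contradiction that $E_\gamma \leq_B {=^{+++}}$; by Corollary~\ref{cor : irred to cong}(3) there is a set $I$ of rank $3$ with $V(\hat{E}) = V(I)$ and $I$ definable from $\hat{E}$ over $V$. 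The rank-$3$ agreement gives $I \subseteq V(\bar{C})$, and since $\Z^{<\omega} \curvearrowright \P_{\bar{C}}$ is ergodic by Example~\ref{ex;finite-choice}, Lemma~\ref{lem;weak-hom} then places $I$ itself in $V(\bar{C})$. But $V(\bar{C}) \subsetneq V(\seqq{\hat{C}_n}{n < \omega}) = V(\hat{E})$ by Proposition~\ref{prop:symmetric-cohen-properties}(3) (each $\Z \cdot \hat{C}_n$ being nontrivial generically), so $V(I) \subsetneq V(\hat{E})$, contradicting $V(I) = V(\hat{E})$. The main obstacle is verifying the symmetry of $\bar{C}$ over the choiceless base $V(\bar{B})$; however, since Proposition~\ref{prop:technical_tool} is stated and proved in ZF, the argument transfers directly once the commuting actions on the poset are set up.
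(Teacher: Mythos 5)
Your proof is correct and follows essentially the same route as the paper: the same three-layer tower of symmetric sequences $\bar{A}$, $\bar{B}$, $\bar{C}$, the same two applications of Monro's lemma to get rank-$3$ agreement between $V(\bar{C})$ and $V(\seqq{\hat{C}_n}{n<\omega})$, and the same final contradiction via Corollary~\ref{cor : irred to cong}(3), the ergodicity of $\Z^{<\omega}\curvearrowright\P_{\bar{C}}$, and Lemma~\ref{lem;weak-hom}. The only quibble is terminological: the invariant $I$ produced by Corollary~\ref{cor : irred to cong}(3) is a set of rank-$3$ sets (so itself of rank $4$), which is exactly why the rank-$3$ agreement yields $I\subseteq V(\bar{C})$ rather than $I\in V(\bar{C})$ directly, as your subsequent step correctly uses.
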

If, for example, $\Gamma=\mathbb{Z}(q^\infty)$ for some prime $q$, then the above group is tame according to Solecki's characterization. We therefore conclude the lower bound in Theorem~\ref{thm;main}.

In the previous section we considered the diagonal action $\mathbb{Z}_p \curvearrowright Y_p$ defined by
\[ \sigma \cdot \seqq{x_n}{n < \omega} = \seqq{\sigma \cdot x_n}{n < \omega}, \]
which we extend to an action of $\mathbb{Z}$ by the natural surjection of $\mathbb{Z}$ onto $\mathbb{Z}_p$.
Now observe that there is a diagonal action
\[ \mathbb{Z} \curvearrowright \prod_{p \in P_n} Y_p \]
defined in terms of these other diagonal actions by
\[ z \cdot \seqq{\seqq{x_{p, n}}{p \in P_n}}{n < \omega} = \seqq{z \cdot \seqq{x_{p, n}}{p \in P_n}}{n < \omega}.\]
This action commutes with $\beta_{\oplus P_n}$ from Remark~\ref{remark: restriction to primes in P}. 

Considering the componentwise action
\[ \mathbb{Z}^{<\omega} \curvearrowright \prod_{n < \omega} \prod_{p \in P_n} Y_p\]
and the componentwise action of the product of $\beta_{\oplus P_n}$'s, we get an action
\[\beta_{\pi} : \prod_{n < \omega} \left[ \bigoplus_{p \in P_n} \mathbb{Z}_p \times \prod_{p \in P_n} (\mathbb{Z}_p^{<\omega} \times \Gamma^\omega) \right] \curvearrowright \prod_{n < \omega} \prod_{p \in P_n} Y_p.\]
These two actions commute, providing the action
\[\gamma : \mathbb{Z}^{<\omega} \times\prod_{n < \omega} \left[ \bigoplus_{p \in P_n} \mathbb{Z}_p \times \prod_{p \in P_n} (\mathbb{Z}_p^{<\omega} \times \Gamma^\omega) \right] \curvearrowright \prod_{n < \omega} \prod_{p \in P_n} Y_p.\]
Note the the group in the action $\gamma$ is isomorphic to the group in Theorem~\ref{thm: non Pi5 subsection}.
\begin{proposition}
The orbit equivalence relation $E_\gamma$ is not Borel-reducible to $=^{+++}$.
\end{proposition}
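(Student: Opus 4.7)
The plan is to add one more symmetric layer to the $E_{\beta_\oplus}$ construction so that the $E_\gamma$-invariant $\hat{C}$ cannot be recovered from any set of sets of sets of reals. Force over $V$ with a finite-support product of Cohen forcings to obtain a triply-indexed generic $\seqq{x_{p,n,m}}{p\in P_n,\,n,m<\omega}$. Set $A_{p,n,m}=\Gamma\cdot x_{p,n,m}$ and $\bar{A}=\seqq{\mathbb{Z}_p\cdot A_{p,n,m}}{p\in P_n,\,n,m<\omega}$; Propositions~\ref{prop:shift_action_ergodic} and \ref{prop:technical_tool} make $\bar{A}$ symmetric over $V$. Define $B_{p,n}=\mathbb{Z}_p^{<\omega}\cdot\seqq{A_{p,n,m}}{m<\omega}$, an absolute $E_{\alpha_p}$-invariant, and $\bar{B}=\seqq{\mathbb{Z}_p\cdot B_{p,n}}{p\in P_n,\,n<\omega}$. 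Since the $\mathbb{Z}_p^{<\omega}$-action on the associated choice posets is ergodic (Example~\ref{ex;finite-choice}) and commutes with the $\mathbb{Z}_p$-rotation, Proposition~\ref{prop:technical_tool} applied over $V(\bar{A})$ shows that $\bar{B}$ is symmetric over $V(\bar{A})$. Finally let $C_n=\bigoplus_{p\in P_n}\mathbb{Z}_p\cdot\seqq{B_{p,n}}{p\in P_n}$, an absolute $\beta_{\oplus P_n}$-invariant, and $\bar{C}=\seqq{\mathbb{Z}\cdot C_n}{n<\omega}$; the ergodic $\bigoplus_{p\in P_n}\mathbb{Z}_p$-action on the relevant choice poset (Example~\ref{ex;finite-choice}) commutes with the diagonal $\mathbb{Z}$-action defined via $z\mapsto z\bmod p$, so Proposition~\ref{prop:technical_tool} applied over $V(\bar{B})$ yields that $\bar{C}$ is symmetric over $V(\bar{B})$. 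Then $\hat{C}=\mathbb{Z}^{<\omega}\cdot\seqq{C_n}{n<\omega}$ is an absolute $E_\gamma$-invariant with $V(\hat{C})=V(\seqq{C_n}{n<\omega})$.

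Next, iterate Monro's step (Lemma~\ref{lem;monro-step}) combined with Proposition~\ref{prop:symmetric-cohen-properties}(2): starting from $V$ and $V(\bar{A})$ agreeing on ordinals, symmetry of $\bar{A}$ gives that $V(\bar{A})$ and $V(\seqq{A_{p,n,m}})$ agree on reals; symmetry of $\bar{B}$ over $V(\bar{A})$ then gives that $V(\bar{B})$ and $V(\seqq{B_{p,n}})$ agree on sets of reals; and symmetry of $\bar{C}$ over $V(\bar{B})$ finally gives that $V(\bar{C})$ and $V(\seqq{C_n})=V(\hat{C})$ agree on sets of sets of reals.

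Assume for contradiction that $E_\gamma\leq_B {=^{+++}}$. Corollary~\ref{cor : irred to cong}(3) supplies a set of sets of sets of reals $I\in V(\hat{C})$, definable from $\hat{C}$ over $V$, with $V(I)=V(\hat{C})$. Each element of $I$ is a set of sets of reals, hence lies in $V(\bar{C})$ by the rank-$3$ agreement; so $I\subseteq V(\bar{C})$. The action $\mathbb{Z}^{<\omega}\curvearrowright\P_{\bar{C}}$ is ergodic (Example~\ref{ex;finite-choice}), and $\hat{C}$ is the orbit of the $\P_{\bar{C}}$-generic choice function $\seqq{C_n}{n<\omega}$, so Lemma~\ref{lem;weak-hom} promotes $I\subseteq V(\bar{C})$ to $I\in V(\bar{C})$, giving $V(I)\subseteq V(\bar{C})$. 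Moreover, each orbit $\mathbb{Z}\cdot C_n$ has more than one element (elements of $\bigoplus_{p\in P_n}\mathbb{Z}_p$ have finite support while $z\mapsto(z\bmod p)_{p\in P_n}$ is nonzero on infinitely many coordinates for any $z\neq 0$), so Proposition~\ref{prop:symmetric-cohen-properties}(3) gives $V(\bar{C})\subsetneq V(\hat{C})$, contradicting $V(I)=V(\hat{C})$. The most delicate step is verifying that the three symmetric layers stack correctly: Proposition~\ref{prop:technical_tool} is applied over the choiceless models $V(\bar{A})$ and $V(\bar{B})$, so one must track the definability of the commuting ergodic actions in these iterated symmetric extensions, and lift the ``no new subsets of the base model'' conclusion through three Monro steps to reach sets of sets of reals.
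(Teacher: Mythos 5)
Your proposal is correct and follows essentially the same route as the paper: three stacked symmetric layers $\bar{A},\bar{B},\bar{C}$ built via Proposition~\ref{prop:technical_tool}, iterated Monro steps to show $V(\bar{C})$ and $V(\hat{C})$ agree on sets of sets of reals, and then ergodicity of $\mathbb{Z}^{<\omega}\curvearrowright\P_{\bar{C}}$ with Lemma~\ref{lem;weak-hom} and Corollary~\ref{cor : irred to cong}(3) to derive the contradiction. The only differences are cosmetic (you rebuild the generic with explicit triple indexing instead of importing the invariants from the $\BPi^0_4$ section, and you spell out why $|\mathbb{Z}\cdot C_n|\geq 2$, which the paper leaves implicit).
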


\begin{proof}
Let us denote the ground model of ZFC, which we previously denoted $V$, as $V_0$. We next use $V$ to denote the model we previously denoted $V(\bar{A})$ in Section~\ref{subsec: non Pi4} above. That is, $V$ is a model of ZF, and furthermore, we have a sequence $\seqq{B_p}{p \textrm{ prime}}$, in a generic extension of $V$, which serves as an absolute invariant for $\beta_{\oplus}$ such that $V$ and $V(\bar{B})$ share the same sets of reals, where $\bar{B} = \seqq{\Z_p \cdot B_p}{p \textrm{ prime}}$.
Note that $V(\bar{B})$ is equal to $V_0(\bar{B})$, as the set $\bar{A}$ can be defined from $\bar{B}$.

For any $n$, $\bigoplus_{p \in P_n}\Z_p \cdot \seqq{B_p}{p\in P_n}$ is an absolute invariant for $\beta_{\oplus P_n}$.
We define $C_n$ to be this set, so that $\seqq{C_n}{n<\omega}$ is an absolute invariant for $\beta_{\pi}$. 
Observe that $\mathbb{Z}^{<\omega} \cdot \seqq{C_n}{n < \omega}$ is an absolute invariant for $\gamma$. 
Finally, define $\bar{C}= \seqq{\mathbb{Z} \cdot C_n}{n < \omega}$.

\begin{claim}
The sequence $\bar{C}$ is symmetric over $V(\bar{B})$.
\end{claim}
\begin{proof}
The claim follows from Proposition \ref{prop:technical_tool}. The argument is similar to Claim~\ref{claim:symmetric2}.
\end{proof}
Note that $V_0(\hat{C})$ is equal to the model $V_0(\seqq{C_n}{n < \omega})$ and also to $V(\hat{C})$, or $V(\seqq{C_n}{n < \omega})$. Furthermore, this model strictly extends $V(\bar{C})$, as the latter does not contain the sequence $\seqq{C_n}{n < \omega}$, which is a choice sequence through $\bar{C}$.

\begin{claim}\label{claim:same_sets_of_sets_of_reals}
The models $V(\bar{C})$ and $V(\seqq{C_n}{n<\omega})$ have the same sets of sets of reals.
\end{claim}
\begin{proof}
Recall that $V(\bar{B})$ and $V(\seqq{B_p}{p \textrm{ prime}})$ have the same sets of reals.  Because $\bar{C}$ is symmetric over $V(\bar{B})$, we know by Proposition~\ref{prop:symmetric-cohen-properties} that $V(\bar{C})$ and $V(\seqq{C_n}{n<\omega})$ have the same subsets of $V(\bar{B})$. Thus by Lemma~\ref{lem;monro-step} they have the same sets of sets of reals.
\end{proof}

Now we assume for the sake of contradiction that $E_\gamma$ is Borel reducible to $=^{+++}$. 
By Corollary~\ref{cor : irred to cong} (3), there is a set of sets of sets of reals $I$ such that $V(\hat{C}) = V(I)$ and $I$ is definable from $\hat{C}$ over $V$.
By Claim~\ref{claim:same_sets_of_sets_of_reals}, $I\subset V(\bar{C})$.
By Example~\ref{ex;finite-choice}, the action $\Z^{<\omega} \curvearrowright \P_{\bar{C}}$
is ergodic.
It follows from Lemma \ref{lem;weak-hom} that $I \in V(\bar{C})$. In particular, $V(I)$ is contained in $V(\bar{C})$, and therefore is not equal to $V(\hat{C})$, a contradiction.
\end{proof}

\section{An application to the Clemens-Coskey jumps}\label{subsection:CC jumps}

Recall the definitions of the $\Gamma$-jumps of Clemens and Coskey from Section~\ref{sec: intro CC jumps}.
In this section we prove Theorem~\ref{thm: Z2 jump non pi3}, that the $\mathbb{Z}^2$-jump of $E_0$, $E_0^{[\mathbb{Z}^2]}$, is not Borel reducible to $=^+$. 

As mentioned in Section~\ref{sec: intro CC jumps}, Clemens and Coskey showed that, when restricted to a comeager subset of its domain, the equivalence relation $E_0^{[\mathbb{Z}^2]}$ is in fact Borel reducible to $=^+$. The main point here is to restrict the domain of $E_0^{[\mathbb{Z}^2]}$ to a certain meager set, in which the complexity hides. 
This set will be those elements in $(2^\omega)^{\mathbb{Z}^2}$ such that each row is periodic, with a distinct prime period.
The non-reducibility result then follows as in the second part of Proposition~\ref{proposition: non Pi3 actions of Z and direct sum}, based on the Chinese-remainder theorem.
In fact, we show here that the equivalence relation $E_{\alpha_{\Z}}$, which is not Borel reducible to $=^+$, is Borel reducible to $E_0^{[\Z^2]}$.

Recall the action $\alpha_\Z$ of $\mathbb{Z}\times\Gamma^\omega$ on $\prod_p(2^\Gamma)^p$ described before Proposition~\ref{proposition: non Pi3 actions of Z and direct sum}, and let $E_{\alpha_\Z}$ be the orbit equivalence relation.
Take $\Gamma$ to be $\mathbb{Z}$.
Note that $(E_0^\omega)^{[\mathbb{Z}]}\leq_B E_0^{[\mathbb{Z}^2]}$.
The following lemma then concludes the proof of Theorem~\ref{thm: Z2 jump non pi3}.
\begin{lemma}
$E_{\alpha_\Z}$ is Borel reducible to $(E_0^\omega)^{[\mathbb{Z}]}$.
\end{lemma}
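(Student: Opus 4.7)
\medskip

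The plan is to encode, for each $n \in \mathbb{Z}$ and each prime $p$, the rotation-by-$n$ of the tuple $x_p \in (2^\Z)^p$ as a single real modulo $E_0$, exploiting hyperfiniteness. Enumerate the primes $p_0, p_1, \ldots$ and, for each prime $p$, fix a Borel reduction $\psi_p \colon (2^\Z)^p \to 2^\omega$ from the diagonal shift-orbit equivalence (induced by $\Gamma = \Z$ acting coordinate-wise by shift) to $E_0$. Such $\psi_p$ exists because the diagonal $\Z$-action on $(2^\Z)^p$ is conjugate to the usual shift on $(2^p)^\Z$, whose orbit equivalence is hyperfinite and hence Borel reducible to $E_0$. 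Writing $\rho_n$ for rotation by $n$ (mod $p_m$) on $(2^\Z)^{p_m}$, I would define the Borel map $F \colon \prod_p (2^\Z)^p \to ((2^\omega)^\omega)^\Z$ by
\begin{equation*}
F(x)(n)(m) \;=\; \psi_{p_m}\bigl(\rho_n(x_{p_m})\bigr).
\end{equation*}

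For the forward direction, suppose $y = (k,\bar\gamma)\cdot x$, so $y_p = \rho_k \gamma_p x_p$ for every prime $p$. Using that shifts commute with rotations,
\begin{equation*}
F(y)(n)(m) \;=\; \psi_{p_m}\bigl(\gamma_{p_m} \rho_{n+k} x_{p_m}\bigr),
\end{equation*}
which is $E_0$-equivalent to $F(x)(n+k)(m) = \psi_{p_m}(\rho_{n+k} x_{p_m})$ because $\psi_{p_m}$ reduces the $\Gamma$-orbit equivalence to $E_0$. Thus $F(y)(n) \mathrel{E_0^\omega} F(x)(n+k)$ for every $n$, and $k' := -k$ witnesses $F(x) \mathrel{(E_0^\omega)^{[\Z]}} F(y)$.

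For the backward direction, suppose $F(x) \mathrel{(E_0^\omega)^{[\Z]}} F(y)$ with witness $k'$. Then for every $n$ and $m$, $\psi_{p_m}(\rho_{n-k'} x_{p_m}) \mathrel{E_0} \psi_{p_m}(\rho_n y_{p_m})$, so by the reduction property there is $\gamma_{n,m} \in \Z$ with $\rho_n y_{p_m} = \gamma_{n,m}\, \rho_{n-k'} x_{p_m}$. Commuting and applying $\rho_{-n}$ gives
\begin{equation*}
y_{p_m} \;=\; \gamma_{n,m}\, \rho_{-k'} x_{p_m}
\end{equation*}
for every $n$. The main obstacle here is that a priori the exponent $\gamma_{n,m}$ may depend on $n$, whereas a witness for the $\Z^\omega$-action on the product must be a single element of $\Z$ per prime. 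However, the left-hand side is independent of $n$, so setting $\gamma_{p_m} := \gamma_{0,m}$ yields $y_{p_m} = \gamma_{p_m}\, \rho_{-k'} x_{p_m}$ uniformly in $m$. Therefore $y = (-k', \bar\gamma)\cdot x$ with $\bar\gamma = \langle \gamma_{p_m}\rangle_m$, completing the reduction. The only delicate point is the existence of the $\psi_p$, which follows from Weiss's theorem (or directly from the Dougherty--Jackson--Kechris classification of hyperfinite Borel equivalence relations).
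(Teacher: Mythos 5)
Your proof is correct and follows essentially the same route as the paper: the paper uses the identical encoding $f(x)(l)(p)=(l\bmod p)\cdot x(p)$, reducing to $F^{[\mathbb{Z}]}$ with $F=\prod_p F_p$ and then invoking the bireducibility of each $F_p$ with $E_0$, whereas you simply fold the hyperfiniteness reductions $\psi_p$ into the map to land directly in $(E_0^\omega)^{[\mathbb{Z}]}$. Both directions of your verification, including the observation that one may take $n=0$ to extract a single shift witness per prime, match the paper's argument.
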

\begin{proof}
$E_0^\omega$ is defined on $(2^\omega)^\omega$ by $a\mathrel{E_0^\omega} b\iff(\forall n<\omega)a(n)\mathrel{E_0}b(n)$.  $(E_0^\omega)^{[\mathbb{Z}]}$ is defined on $((2^\omega)^\omega)^{\mathbb{Z}}$ by 
\begin{equation*}
    x\mathrel{(E_0^\omega)^{[\mathbb{Z}]}}y\iff (\exists k\in\mathbb{Z})(\forall l\in\mathbb{Z}) x(l+k)\mathrel{E_0^\omega}y(l).
\end{equation*}

Let $F_p$ be the orbit equivalence relation induced by the action of $\Gamma$ on $X^p=(2^\Gamma)^p$. Since this is a generically ergodic action of $\Gamma=\mathbb{Z}$, $F_p$ is Borel bireducible with $E_0$ for all $p$ (see~\cite[Theorem 6.6]{KechrisMiller2004}).
Let $P\subset\omega$ be the set of prime numbers.
Let $F=\prod_{p\in P}F_p$ be the product relation defined on the Polish space $\prod_{p\in P}X^p$. Then $F$ is Borel bireducible with $E_0^\omega$, and $F^{[\mathbb{Z}]}$ is Borel bireducible with $(E_0^\omega)^{[\mathbb{Z}]}$.
We conclude the lemma by showing that $E_{\alpha_\Z}$ is Borel reducible to $F^{[\mathbb{Z}]}$.

We view the space $\prod_p (2^\Gamma)^p$ as $P$-many (vertical) copies of the Polish spaces $(2^\Gamma)^p$, and we view $(\prod_p (2^\Gamma)^p)^{\mathbb{Z}}$ as $\mathbb{Z}$-many (horizontal) copies of $\prod_p (2^\Gamma)^p$.

Given $x\in \prod_{p}(2^\Gamma)^p$, define $f(x)\in (\prod_{p}(2^\Gamma)^p)^{\mathbb{Z}}$ as follows.
\begin{equation*}
f(x)(l)(p)=(l \mod p)\cdot x(p).    
\end{equation*}
That is, for a given $p\in P$, the (horizontal) $\mathbb{Z}$-line $\seqq{f(x)(l)(p)}{l\in\mathbb{Z}}$ is periodic, with period $p$, and it takes the values in the orbit $\mathbb{Z}_p\cdot x(p)$, $x(p), 1\cdot x(p),..., (p-1)\cdot x(p)$. We show that $f$ is a Borel reduction of $E_{\alpha_\Z}$ to $F^{[\mathbb{Z}]}$.

Fix $x\in \prod_{p}(2^\Gamma)^p$ and $\gamma\in\prod_p\Gamma$. Then $f((0,\gamma)\cdot x)(l)(p)$ and  $f((0,\gamma)\cdot x)$ are $F$-related, and therefore are $F^{\mathbb{[Z]}}$-related. Let $\bar{1}$ be the identity element in the group $\prod_p\Gamma$.
For $k\in\mathbb{Z}$, $f((k,\bar{1})\cdot x)(l)(p)=f(x)(p)(l-k)$. Thus $f(x)$ and $f((k,\bar{1})\cdot x)$ are $(\prod_p F_p)^{[\mathbb{Z}]}$-related.
We conclude that $f$ is a homomorphism from $E_{\alpha_\Z}$ to $F^{[\mathbb{Z}]}$.

Assume now that $f(x)$ and $f(y)$ are $F^{[\mathbb{Z}]}$-related. Fix $k$ such that $f(x)(l+k)\mathrel{F}f(y)(l)$ for all $l\in\mathbb{Z}$.
Fix $\gamma\in\prod_p\Gamma$ such that $\gamma(p)\cdot f(x)(k)(p)=f(y)(0)(p)$ for all prime $p$.
Then $(k,\gamma)\cdot x=y$.
We conclude that $f$ is a reduction, as required.
\end{proof}

\section{Upper bounds}\label{section: upper bounds}

In this section we show that a minor modification of the arguments of Ding and Gao in \cite{DingGao2017} give the optimal upper bound of $D(\BPi^0_5)$ for the potential Borel complexity of any orbit equivalence relation induced by an action of a tame abelian product group.

\begin{proposition}[Ding-Gao]\label{pr:ding_gao_upper}
Suppose that $\prod_{n \in \omega} \Gamma_n$ is a tame abelian product group such that $\Gamma_n$ is torsion for every $n$. Then $\prod_{n \in \omega} \Gamma_n$ is $\BPi^0_5$-tame.

Suppose furthermore that for every $n$, the group $\Gamma_n$ does not have $\mathbb{Z}_p^{<\omega}$ for prime $p$ as a subgroup. Then $\prod_{n \in \omega} \Gamma_n$ is in fact $\BPi^0_4$-tame.
\end{proposition}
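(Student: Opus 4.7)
The strategy is to revisit the argument of Ding and Gao~\cite{DingGao2017}, which proves $\BPi^0_6$-tameness by constructing, for every tame abelian product group $G=\prod_n\Gamma_n$ with torsion factors, a Borel complete classification whose invariants are hereditarily countable sets of rank four above the reals. Since for equivalence relations classifiable by countable structures the potential complexity corresponds to the rank of an absolute classification (a rank-$k$ invariant yielding a reduction to the $k$-th Friedman--Stanley jump, i.e.\ potentially $\BPi^0_{k+2}$ by Theorem~\ref{thm: HKL max}), the improvement from $\BPi^0_6$ to $\BPi^0_5$ will follow once we identify a single level of their classification that can be collapsed to produce an invariant of rank three.

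Concretely, Solecki's structure theorem forces each $\Gamma_n$ to decompose into $p$-components of the form $F_n^{(p)}\times\Z(p^\infty)^{k_n^{(p)}}$ with $F_n^{(p)}$ finite and, for each fixed prime $p$, nontrivial only finitely often. The orbits of a $\Z(p^\infty)$-action are classifiable by sets of reals (rank one, corresponding to $=^+_\R$); the finite group $F_n^{(p)}$ contributes only a finite orbit and does not raise the rank; and one must then aggregate both over the index $n$ and over the prime $p$. Ding and Gao carry out these two aggregations as two separate passages to a countable sequence modulo a $\bigoplus$-action, each costing one rank level. I would show that since both index sets are countable, a single Borel pairing can merge them into one aggregation, consuming only one rank level instead of two. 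This is where the main technical care is required: one needs a uniformly Borel way to encode the doubly-indexed invariants into a single canonical representative, and to verify that the induced equivalence relation on the encoded invariants is indeed in the target pointclass.

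For the stronger conclusion under the hypothesis that no $\Gamma_n$ contains $\Z_p^{<\omega}$ as a subgroup, I would observe that this forces the $p$-rank of each $\Gamma_n^{(p)}$ to be finite, so the $F_n^{(p)}$ collectively generate no ``finite-support symmetry over $n$'' within a single prime. This eliminates exactly the choice-sequence layer that is responsible for the jump from $\BPi^0_4$ to $\BPi^0_5$ in the lower-bound construction of Section~\ref{sec;lower-bounds}, leaving the invariant at rank two and yielding $\BPi^0_4$-tameness.

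The main obstacle will be the bookkeeping in the merging step described above: making precise the sense in which Ding and Gao's two aggregation steps can be combined, while keeping the Borel structure intact and ensuring that the resulting reduction actually targets an equivalence relation of the claimed pointclass. In particular, one must fix Borel enumerations of the primes and of $p$-component invariants that are coherent across all $\Gamma_n$ simultaneously; only then can the two ``layers'' of countable sequences be fused into one without losing Borelness. Once this point is settled, the remaining work (verifying completeness, absoluteness, and Borelness of the invariant map) proceeds essentially as in~\cite{DingGao2017}.
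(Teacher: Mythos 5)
Your proposal misdiagnoses where the improvement from $\BPi^0_6$ to $\BPi^0_5$ comes from, and the central step you describe is both unnecessary and unjustified. The paper's proof is essentially a citation argument: Ding and Gao already prove (their Theorems 5.15 and 5.14) that in the first case the orbit equivalence relation is Borel reducible to $(E_0^\omega)^{++}$ and in the second case to $(E_0^\omega)^{+}$. Since $E_0^\omega \le_B {=^+}$, these compose to give reductions to $=^{+++}$ and $=^{++}$ respectively, and the Hjorth--Kechris--Louveau computation (Theorem~\ref{thm: HKL max}) shows these are potentially $\BPi^0_5$ and $\BPi^0_4$. In other words, Ding and Gao's classification already produces invariants of the right rank; their $\BPi^0_6$ bound was a crude direct complexity estimate of $(E_0^\omega)^{++}$, and the improvement comes entirely from replacing that estimate with the sharp HKL bound on the iterated Friedman--Stanley jumps. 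No level of their classification needs to be collapsed.

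The merging step you propose is the genuine gap. The two ``aggregations'' in the Ding--Gao analysis are not interchangeable passages to countable sequences: one layer is a quotient by a finite-support ($\bigoplus$-type) action and the other sits inside a full product, and it is exactly this asymmetry that governs the resulting equivalence relation. Pairing the two index sets into one and performing a single aggregation would change which coordinates may be altered with finite support, hence would classify a different equivalence relation; you give no argument that the merged invariant remains a complete classification, and the lower bounds in Section~\ref{sec;lower-bounds} of this very paper (which realize exact complexity $D(\BPi^0_5)$) show that one cannot collapse rank indiscriminately. Your argument for the $\BPi^0_4$ case has the same problem: the relevant fact is not a structural elimination of a ``choice-sequence layer'' but simply that in that case Ding--Gao reduce to $(E_0^\omega)^{+}$ rather than $(E_0^\omega)^{++}$, and one applies the same composition with $E_0^\omega \le_B {=^+}$ and HKL one level down.
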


\begin{proof}
It is proven in \cite[Theorem 5.15]{DingGao2017} that in the first case any orbit equivalence relation induced by the product group is Borel-reducible to $(E_0^\omega)^{++}$, and in \cite[Theorem 5.14]{DingGao2017} that in the second case it is Borel-reducible to $(E_0^\omega)^{+}$. It is easy to see directly that $E_0^\omega \le_B =^+$ in which case we get Borel reductions to $=^{+++}$ and $=^{++}$ respectively. The fact that these relations are potentially $\BPi^0_5$ and potentially $\BPi^0_4$ respectively can be found in \cite[Theorem 2]{HKL1998}.
\end{proof}

\begin{lemma}\label{lem:d_upper_bound}
Let $G$ be a Polish group and $\Delta$ a countable group, and suppose that $\Delta \times G$ acts continuously on a Polish space $X$. We also let $G$ act on $X$ with the subaction, so that we get orbit equivalence relations $E^{\Delta \times G}_X$ and $E^G_X$. If $E^{G}_X$ is potentially $\BPi^0_\alpha$, then $E^{\Delta \times G}_X$ is potentially $\BSigma^0_{\alpha+1}$.
\end{lemma}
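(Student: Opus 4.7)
The plan is to decompose $E^{\Delta \times G}_X$ as a countable union of translates of $E^G_X$ and control the complexity of each piece individually. Since $\Delta \times G$ is a direct product, the $\Delta$- and $G$-subactions on $X$ commute, so
\[ x \mathrel{E^{\Delta \times G}_X} y \iff (\exists \delta \in \Delta)\, (\exists g \in G)\; g \cdot (\delta \cdot x) = y \iff (\exists \delta \in \Delta)\; (\delta \cdot x) \mathrel{E^{G}_X} y.\]
Thus
\[ E^{\Delta \times G}_X = \bigcup_{\delta \in \Delta} F_\delta, \quad F_\delta := \set{(x,y) \in X \times X}{(\delta \cdot x) \mathrel{E^{G}_X} y},\]
which is a countable union since $\Delta$ is countable.

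The main work is to find a single Polish topology refinement that simultaneously witnesses the potential complexity bound for $E^G_X$ and makes each translation map $x \mapsto \delta \cdot x$ continuous. To do this, first fix a Polish topology $\tau$ on $X$, generating the same Borel structure as the original, such that $E^G_X \in \BPi^0_\alpha(\tau \times \tau)$. The action of $\Delta$ on $X$ is Borel (being continuous in the original Polish topology, which has the same Borel sets as $\tau$), so by the standard topology refinement theorem for Borel actions of countable groups (see for instance \cite[Section 5.1]{BeckerKechris1996}), we may further refine $\tau$ to a Polish topology $\tau'$ on $X$, still generating the same Borel sets, such that each $\delta \in \Delta$ acts by a $\tau'$-homeomorphism. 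Since $\tau \subseteq \tau'$, we have $\BPi^0_\alpha(\tau \times \tau) \subseteq \BPi^0_\alpha(\tau' \times \tau')$ and in particular $E^G_X$ remains in $\BPi^0_\alpha(\tau' \times \tau')$.

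Now each map $(x,y) \mapsto (\delta \cdot x, y)$ is $(\tau' \times \tau')$-continuous, so each $F_\delta$, being the preimage of $E^G_X$, is in $\BPi^0_\alpha(\tau' \times \tau')$. A countable union of $\BPi^0_\alpha$ sets is $\BSigma^0_{\alpha+1}$, hence $E^{\Delta \times G}_X \in \BSigma^0_{\alpha+1}(\tau' \times \tau')$, which witnesses that $E^{\Delta \times G}_X$ is potentially $\BSigma^0_{\alpha+1}$, as required.

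The only potential obstacle is the simultaneous refinement step, and this is completely standard: one can, for example, first produce $\tau$ from the original topology, then enumerate $\Delta = \{\delta_n : n < \omega\}$ and iteratively refine so that each $\delta_n$ acts continuously, using that at each stage one only needs to incorporate countably many Borel sets (preimages of a basis) as open, which preserves Polishness and Borel structure.
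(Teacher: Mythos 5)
Your proof is correct and follows the same route as the paper's: the paper fixes a topology $\tau$ witnessing that $E^G_X$ is potentially $\BPi^0_\alpha$ and asserts ``by direct computation'' that $E^{\Delta\times G}_X$ is $\BSigma^0_{\alpha+1}$, which is precisely your decomposition into the countable union $\bigcup_{\delta\in\Delta}F_\delta$ of translates of $E^G_X$. Your extra refinement to $\tau'$ making each $\delta\in\Delta$ act by a homeomorphism is a legitimate (and in fact needed) detail that the paper leaves implicit, since continuity of the $\Delta$-action need not survive the initial change of topology, and without it $F_\delta$ is only guaranteed to be Borel rather than $\BPi^0_\alpha$.
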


\begin{proof}
Fix a compatible Polish topology $\tau$ on $X$ such that $E^G_X$ is $\BPi^0_\alpha$ as a subset of $X \times X$ in the product toplogy $\tau \times \tau$. By direct computation, $E^{\Delta \times G}$ can be seen to be $\BSigma^0_{\alpha+1}$ as a subset of $X \times X$ in this topology.
\end{proof}

\begin{theorem}
If $\prod_{n \in \omega} \Gamma_n$ is a tame abelian product group, then any orbit equivalence relation induced by an action of $\prod_{n \in \omega} \Gamma_n$ is potentially $D(\BPi^0_5)$.
\end{theorem}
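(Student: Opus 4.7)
The plan is to split the group into a torsion part and a countable discrete part using Solecki's theorem, apply the two tools already developed in this section, and then invoke the Hjorth--Kechris--Louveau classification to tighten the resulting bound. By Solecki's characterization, let $F \subseteq \omega$ be the finite set of indices on which $\Gamma_n$ fails to be torsion, and write $G = \Delta \times H$ where $\Delta = \prod_{n \in F}\Gamma_n$ is a finite product of countable discrete groups (hence itself countable discrete) and $H = \prod_{n \notin F}\Gamma_n$. Both of Solecki's conditions restrict to any cofinite subsequence, so $H$ is itself a tame abelian product group all of whose factors are torsion; Proposition~\ref{pr:ding_gao_upper} then guarantees that the restriction of the $G$-action to an $H$-action on $X$ has orbit equivalence relation $E^H$ which is potentially $\BPi^0_5$. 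Lemma~\ref{lem:d_upper_bound} with $\alpha = 5$ immediately yields that $E^G$ is potentially $\BSigma^0_6$.

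To upgrade $\BSigma^0_6$ to $D(\BPi^0_5)$, I would appeal to the HKL classification. Since $G$ is a product of countable discrete groups it is non-archimedean, and hence embeds as a closed subgroup of $S_\infty$; therefore the exact potential complexity $\boldsymbol{\Gamma}_0$ of $E^G$ is one of the pointclasses in the list $\{\mathbf{\Delta}^0_1, \BPi^0_1, \BSigma^0_2\} \cup \{\BPi^0_n, D(\BPi^0_n) : n \geq 3\}$. If $\boldsymbol{\Gamma}_0 = \BPi^0_n$ with $n \geq 6$, then $\tilde{\boldsymbol{\Gamma}}_0 = \BSigma^0_n$ contains $\BSigma^0_6$ as a pointclass, so $E^G$ would also be potentially $\tilde{\boldsymbol{\Gamma}}_0$, contradicting exactness. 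If $\boldsymbol{\Gamma}_0 = D(\BPi^0_n)$ with $n \geq 6$, then $\tilde{D(\BPi^0_n)}$ (sets of the form $S \cup P$ with $S \in \BSigma^0_n$ and $P \in \BPi^0_n$) contains every $\BSigma^0_n$ set via $S = S \cup \emptyset$, and hence contains $\BSigma^0_6$; the same contradiction arises. Thus $\boldsymbol{\Gamma}_0$ is an HKL class of rank at most $5$, each of which is contained in $D(\BPi^0_5)$, and $E^G$ is therefore potentially $D(\BPi^0_5)$.

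The mathematical substance is concentrated in Proposition~\ref{pr:ding_gao_upper} and the HKL classification; no genuine new idea is needed to assemble the argument. The only point requiring care is bookkeeping, namely verifying that tameness descends from $G$ to $H$ after absorbing the finitely many non-torsion factors into $\Delta$, and ensuring that the topology-refinement implicit in Lemma~\ref{lem:d_upper_bound} can be taken to keep the countable $\Delta$-action continuous (standard, since $\Delta$ is countable and refining a Polish topology to make countably many Borel maps continuous preserves all $\BPi^0_\alpha$ classes). Once these routine checks are in place, the HKL gap between $\BPi^0_5$ and $\BPi^0_6$ does all the real work of closing down to $D(\BPi^0_5)$.
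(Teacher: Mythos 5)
Your proof is correct and follows essentially the same route as the paper: split off the finitely many non-torsion factors via Solecki, apply Proposition~\ref{pr:ding_gao_upper} to the torsion tail, use Lemma~\ref{lem:d_upper_bound} to get $\BSigma^0_6$, and close the gap to $D(\BPi^0_5)$ via the Hjorth--Kechris--Louveau classification. Your write-up is in fact slightly more careful than the paper's at the final step, where you make explicit why no HKL class of rank $\ge 6$ can be the exact potential complexity of a relation that is already potentially $\BSigma^0_6$.
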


\begin{proof}
Let $X$ be a Polish space with a continuous action of $\prod_{n \in \omega} \Gamma_n$. By Solecki's characterization \cite{Solecki1995}, if the product group is tame then there must be some $n_0$ such that for every $k \ge n_0$, the group $\Gamma_k$ is torsion. Let $\Delta = \prod_{k < n_0} \Gamma_k$ and $G = \prod_{k \ge n_0} \Gamma_k$. 
By Proposition \ref{pr:ding_gao_upper}, the orbit equivalence relation $E^G_X$ is potentially $\BPi^0_5$. 
By Lemma \ref{lem:d_upper_bound}, the orbit equvialence relation $E^{\Delta \times G}_X$ is $\Sigma^0_6$. 
As the group $\Delta \times G = \prod_{n \in \omega} \Gamma_n$ is a non-Archimedean Polish group, by \cite{HKL1998}, the potential complexity is actually $D(\BPi^0_5)$.
\end{proof}

\bibliographystyle{alpha}
\bibliography{bibliography}
\end{document}